 \newtheorem{thm}{Theorem}[section]
 \newtheorem{cor}[thm]{Corollary}
 \newtheorem{lem}[thm]{Lemma}
 \newtheorem{prop}[thm]{Proposition}
 \theoremstyle{definition}
 \newtheorem{defn}[thm]{Definition}
 \theoremstyle{remark}
 \newtheorem{rem}[thm]{Remark}
 \numberwithin{equation}{section}
\begin{document}

\title[One generalization of the classical moment problem]
      {One generalization \\ of the classical moment problem}

\author{Volodymyr Tesko}
\address{Institute of Mathematics, National Academy of Sciences of Ukraine, 3 Tereshchenkivs'ka,
Kyiv, 01601, Ukraine}
\email{tesko@imath.kiev.ua}

\thanks{The work is partially supported by the DAAD, Project A/11/05321}

\subjclass[2000]{Primary 44A60, 47A57}
\date{11/07/2011}
\keywords{Convolution, positive functional, moment problem,
projection spectral theorem, Sheffer polynomials}

\begin{abstract}
  Let $\ast_P$ be a product on $l_{\rm{fin}}$ (a space of all finite sequences)
  associated with a fixed family $(P_n)_{n=0}^{\infty}$ of real polynomials on
  $\mathbb{R}$. In this article, using methods from the theory of
  generalized eigenvector expansion, we investigate moment-type properties
  of $\ast_P$-positive functionals on $l_{\rm{fin}}$.

  If $(P_n)_{n=0}^{\infty}$ is a family of the Newton polynomials
  $P_n(x)=\prod_{i=0}^{n-1}(x-i)$ then the corresponding product
  $\star=\ast_P$ is an analog of the so-called Kondratiev--Kuna
  convolution on a ``Fock space''. We get an explicit expression for
  the product $\star$ and establish a connection between
  $\star$-positive functionals on $l_{\rm{fin}}$ and a one-dimensional
  analog of the Bogoliubov generating functionals (the classical
  Bogoliubov functionals are defined correlation functions for
  statistical mechanics systems).
\end{abstract}

\maketitle

\section{Introduction}

It is well known that the classical moment problem can be viewed as a
theory of spectral representations of positive functionals on some
classical commutative algebra with involution. Namely, let
$l_{\rm{fin}}$ be a space of all finite sequences
$f=(f_0,\ldots,f_n,0,0,\ldots)$ of complex numbers $f_n$ and $\ast$
denote the Cauchy product on $l_{\rm fin}$, i.e.,
    \begin{equation}\label{eq.convolution.classic.intr}
          (f\ast g)_n:=\sum_{i+j=n}f_{i}g_{j}=\sum_{k=0}^{n}f_{k}g_{n-k}
    \end{equation}
for all $f=(f_n)_{n=0}^{\infty},g=(g_n)_{n=0}^{\infty}\in l_{\rm
fin}$. The space $l_{\rm{fin}}$ endowed with the product $\ast$ is a
commutative algebra with the involution
$f=(f_n)_{n=0}^{\infty}\mapsto \bar{f}:=(\bar{f}_n)_{n=0}^{\infty}$.

The classical moment problem is formulated as follows: {\it for a
given sequence $(\tau_n)_{n=0}^{\infty}$ of real numbers $\tau_n$
when does there exist a non-negative finite Borel measure $\mu$ on
$\mathbb{R}$ such that}
\begin{equation}\label{eq.class.moment.problem}
   \tau_n=\int_{\mathbb{R}}x^n\,d\mu(x),\quad n\in\mathbb{N}_0:=\{0,1,\ldots\}\,?
\end{equation}
The answer is the following: {\it integral representation
(\ref{eq.class.moment.problem}) holds if and only if
$\tau=(\tau_n)_{n=0}^{\infty}$ is a $\ast$-positive functional (more
exactly, non-negative) on $l_{\rm{fin}}$, i.e.,}
$$
      \tau(f\ast\bar{f})=\sum_{j,k=0}^{\infty}\tau_{j+k}f_j\bar{f}_k\geq
      0,\quad f=(f_n)_{n=0}^{\infty}\in l_{\rm{fin}}.
$$

In 
this article an essential role will be played by Yu.~M.~Berezansky's
method \cite{B65} of obtaining representation
(\ref{eq.class.moment.problem}),
which goes back to the works of
M.~G.~Krein \cite{Krein46, Krein48}.
This method is based on the theory of generalized eigenfunction
expansion for selfadjoint operators and,
in its modern version \cite{B02a}, can be formulated as follows. Let
($l_{\rm fin}, \ast$) be an algebra as above  and
$\tau=(\tau_n)_{n=0}^{\infty}$ be a given $\ast$-positive
functional. This pair can be associated, in a usual way, with a
Hilbert space
$H_{\tau}$ generated by
a quasiscalar product
\begin{equation*}
         (f,g)_{H_{\tau}}:=\tau(f\ast\bar{g}),\quad
         f,g\in l_{\rm{fin}}.
\end{equation*}
In this space $H_{\tau}$ the translation operator
\begin{equation}\label{eq.0-0002}
   Jf:=\delta_1\ast f=(0,f_0,f_1,\ldots),\quad f=(f_n)_{n=0}^{\infty}\in l_{\rm
   fin}
\end{equation}
(here $\delta_1:=(0,1,0,0,\ldots)\in l_{\rm{fin}}$) is Hermitian
with equal defect numbers. Therefore it follows from the theory of
generalized eigenfunction expansions that there exists a
non-negative finite Borel measure $\mu$ on $\mathbb{R}$ (spectral
measure) such that  $(x^n)_{n=0}^{\infty}$ is a generalized
eigenvector of the operator $J$ (more exactly, to its selfadjoint
extension) with an eigenvalue $x\in\mathbb{R}$ and the mapping
(Fourier transform)
\begin{equation*}
   H_{\tau}\supset l_{\rm fin}\ni f\mapsto(If)(x):=
   \sum_{n=0}^{\infty}f_n x^n\in L^2(\mathbb{R},\mu)
\end{equation*}
is well-defined and isometric, i.e.,
$$
    (f,g)_{H_{\tau}}=\int_{\mathbb{R}}(If)(x)\overline{(Ig)(x)}d\mu(x),
    \quad f,g\in l_{\rm fin}.
$$
As a consequence of this Parseval equality, we immediately get
representation (\ref{eq.class.moment.problem}):
$$
   \tau_n=(\delta_n, \delta_0)_{H_{\tau}}=\int_{\mathbb{R}}x^n
   \,d\mu(x),
$$
where $\delta_n=(\delta_{nj})_{j=0}^{\infty}$ denotes a
$\delta$-sequence.

Among the advantages of Yu.~M.~Berezansky's method is that this
method admits broad generalizations which give a possibility to
investigate the following moment problems:
strong Hamburger, trigonometric, complex, matrix and different
many-dimensional analogs of them, including infinite-dimensional
cases (in many-dimensional situation it is necessary to investigate
the commuting families of Jacobi type operators), see \cite{B65,
BK88, BKKL99, B02a, B03, BD05, BD06, BM07, BD10} for more detailed
presentation.


By analogy with the above described way of obtaining representation
(\ref{eq.class.moment.problem}), 
we can get moment-type representations in the case when a family
$(x^n)_{n=0}^{\infty}$ of the monomials is replaced by a family
$(P_n)_{n=0}^{\infty}$ of polynomials $P_n:\mathbb{R}\to\mathbb{R}$
(each $P_n$ has a degree $n$). In this situation, instead of the
Cauchy product $\ast$ (\ref{eq.convolution.classic.intr}), it is
necessary to use the product
\begin{equation*}
    f\ast_P g:=I_P^{-1}(I_Pf\cdot I_Pg),\quad (I_Pf)(x):=\sum_{n=0}^{\infty}f_n
    P_n(x), \quad f,g\in l_{\rm fin},
\end{equation*}
generated by the polynomials $P_n(x)$  and, instead of
(\ref{eq.0-0002}), the operator
\begin{equation*}
   J_Pf:=\delta_1\ast_P f,\quad f=(f_n)_{n=0}^{\infty}\in l_{\rm
   fin}
\end{equation*}
(clearly, if $P_n(x)=x^n$ then $\ast=\ast_P$ and $J_P=J$). Let
$H_{\tau}=H_{\tau,P}$ denotes a Hilbert space associated with the
quasiscalar product $(f,g)_{H_{\tau}}:=\tau(f\ast_P\bar{g})$. It can
be shown that for a given $\ast_P$-positive functional
$\tau=(\tau_n)_{n=0}^{\infty}$ on $l_{\rm{fin}}$ there exists a
non-negative finite Borel measure $\mu$ on $\mathbb{R}$ such that
$(P_n)_{n=0}^{\infty}$ is a generalized eigenvector of the operator
$J_P$ ($J_P$ acts in $H_{\tau}$) with an eigenvalue $x\in\mathbb{R}$
and the mapping
\begin{equation*}
   H_{\tau}\supset l_{\rm fin}\ni f\mapsto(I_Pf)(x):=
   \sum_{n=0}^{\infty}f_n P_n(x)\in L^2(\mathbb{R},\mu)
\end{equation*}
is a Fourier transform. The corresponding Parseval equality gives
the moment-type representation
$$
   \tau_n=\int_{\mathbb{R}}P_n(x)\,d\mu(x),\quad n\in \mathbb{N}_0.
$$

The described way of proving the latter representation is given
below in Section~\ref{s.moment-problem}. Let us mention that the
idea of using the theory of generalized eigenvector expansion in a
similar context is not new, see \cite{B02a} for details.

If $(P_n)_{n=0}^{\infty}$ is a family of the so-called Newton
polynomials $P_n(x)=(x)_n:=\prod_{i=0}^{n-1}(x-i)$ then the
corresponding product $\star:=\ast_P$ on $l_{\rm{fin}}$ is an analog
of the so-called Kondratiev--Kuna convolution on a ``Fock space''.
Formula (\ref{eq.convolution.k-k}) in Section~\ref{s.convolution}
gives an explicit expression for the product $\star$. We refer to
\cite{KK02} for the definition and properties of the
Kondratiev--Kuna convolution on a ``Fock space'', see also
Subsection~\ref{s-s.correlations-functions}.

In this article we also study the following problem: {\it for a
given sequence $(\tau_n)_{n=0}^{\infty}$ of real numbers $\tau_n$
when does there exist a non-negative finite Borel measure $\mu$ on
$\mathbb{R}$ such that a Laplace transform
$l_{\mu}(\lambda):=\int_{\mathbb{R}}e^{x\lambda}\,d\mu(x)$ is
analytic in a neighborhood of zero in $\mathbb{C}$  and
$\tau_n=\int_{\mathbb{R}}P_n(x)\,d\mu(x)$ for all $n\in
\mathbb{N}_0$}?

We give an answer on this problem in
Section~\ref{s.analitik-moment-problem} for the case of the
so-called Sheffer polynomials (i.e., polynomials with generating
function of exponential type). The monomials and Newton polynomials
are examples of the Sheffer polynomials. In the case of the
monomials this problem is closely related to the problem of integral
representation of exponentially convex functions (see
Subsection~\ref{s.moment-problem-exp-convex}), in the Newton
polynomials' case this problem is connected with a one-dimensional
analog of the Bogoliubov generating functionals, i.e., with
functions $B:\mathcal{U}\to\mathbb{C}$ ($\mathcal{U}$ is a
neighborhood of $0\in\mathbb{C}$) of such type
\begin{equation*}
    B(\lambda)=\int_{\mathbb{R}}(1+\lambda)^xd\mu(x)=\int_{\mathbb{R}}e^{x\log(1+\lambda)}d\mu(x),
    \quad \lambda\in \mathcal{U},
\end{equation*}
where $\mu$ is a certain non-negative finite Borel measure on
$\mathbb{R}$ (see Subsection~\ref{s.newton-bogolubov}). Note that
$e^{x\log(1+\lambda)}$ is a generating function for the Newton
polynomials $(x)_n$,
\begin{equation*}
     e^{x\log(1+\lambda)}=\sum_{n=0}^{\infty}\frac{\lambda^n}{n!}(x)_n,\quad
     |\lambda|<1.
\end{equation*}
We stress that the classical Bogoliubov functionals were introduced
by N.~N.~Bogoliubov in \cite{Bogoliubov} to define correlation
functions for statistical mechanics systems.

The last part of this article (Section~\ref{s.infinite-dim-case}) is
related, on the one hand, to the infinite-dimensional generalization
of the classical moment problem (\ref{eq.class.moment.problem}) and,
on the other hand, to some tasks of statistical physics. The main
purpose of this section is to explain the motivation of this work
and present a few known examples of the results and some open
problems in the case of functions of infinite many variables.

\section{Preliminaries}

\subsection{Projection spectral theorem}

In this subsection we recall some results concerning the projection
spectral theorem and the quasianalytic criterion of selfadjointness
of operators (for a detailed explanation see e.g. books \cite{B65,
BK88, BUSH}).

Let $\mathcal{H}$ be a complex separable Hilbert space and
\begin{equation}\label{eq.chain}
    \mathcal{H}_-\supset \mathcal{H} \supset \mathcal{H}_+ \supset
    \mathcal{D}
\end{equation}
be a fixed rigging of $\mathcal{H}$. We suppose that $\mathcal{H}_+$
is a Hilbert space which is topologically (i.e., densely and
continuously) and quasinuclearly (i.e., the inclusion operator is of
Hilbert--Schmidt type) embedded into $\mathcal{H}$, $\mathcal{H}_-$
is the dual of $\mathcal{H}_+$ with respect to the zero space
$\mathcal{H}$ (with the pairing $\langle\cdot\,
,\cdot\rangle_{\mathcal{H}}$), and $\mathcal{D}$ is a linear,
separable, topological space that is topologically embedded into
$\mathcal{H}_+$.

The following projection spectral theorem holds (see \cite{B65},
Ch.~5; \cite{BK88}, Ch.~3; \cite{BUSH}, Ch.~15).

\begin{thm}\label{thm.spectral}
    Let $A$ be a self-adjoint operator defined on ${\rm Dom}(A)$ in $\mathcal{H}$. Assume that
    \begin{itemize}
      \item $A$ is standardly connected with chain
      (\ref{eq.chain}), i.e., $\mathcal{D}\subset{\rm Dom}(A)$ and
      the restriction $A\upharpoonright\mathcal{D}$ of the operator $A$ on $\mathcal{D}$ acts from $\mathcal{D}$
      into $\mathcal{H}_+$ continuously.
      \item $A$ has a strong cyclic vector
      $\Omega$, that is there exists a vector $\Omega\in\mathcal{H}$
      such that $\Omega\in{\rm Dom}(A^n)$ for all $n\in\mathbb{N}$
      and a set $\{A^n\Omega\,|\,n\in\mathbb{N}_0\}$ is total in
      $\mathcal{H}_+$ (i.e., a set ${\rm span}\{A^n\Omega\,|\,n\in\mathbb{N}_0\}$ is dense in
      $\mathcal{H}_+$).
    \end{itemize}
    Then there exists a non-negative finite Borel measure $\mu$ on
    $\mathbb{R}$ (spectral measure, defined on the Borel $\sigma$-algebra $\mathcal{B}(\mathbb{R})$)
    such that
    \begin{itemize}
      \item For $\mu$-almost every $x\in\mathbb{R}$ there is a
      unique vector $\xi(x)\in\mathcal{H}_-$ (the so-called generalized eigenvector of  $A$ with an eigenvalue $x$) such that
      $$
         \langle\xi(x),Af\rangle_{\mathcal{H}}=x\langle\xi(x),f\rangle_{\mathcal{H}},
         \quad f\in\mathcal{D}.
      $$
      \item The mapping
      \begin{equation}\label{eq.self.unitary}
         \mathcal{H}\supset\mathcal{D}\ni f\mapsto(I_Af)(\cdot):=\langle
         f,\xi(\cdot)\rangle_{\mathcal{H}}\in L^2(\mathbb{R},\mu)
      \end{equation}
      is well-defined and isometric, i.e.,
      \begin{equation}\label{eq.pars.equality}
         (f,g)_{\mathcal{H}}=\int_{\mathbb{R}}(I_Af)(x)\overline{(I_Ag)(x)}\,d\mu(x),
         \quad f,g\in\mathcal{H}.
      \end{equation}
      Extending the mapping $I_A$ by continuity to the
      whole space $\mathcal{H}$ we obtain an isometric operator
      $I_A:\mathcal{H}\to L^2(\mathbb{R},\mu)$.
    \end{itemize}
\end{thm}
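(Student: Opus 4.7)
The plan is to bootstrap from the classical spectral theorem for a self-adjoint operator with a cyclic vector, with the extra mileage supplied by the quasinuclearity of $\mathcal{H}_{+}\hookrightarrow\mathcal{H}$. First I would apply the classical spectral theorem to $A$ together with its strong cyclic vector $\Omega$: this produces a non-negative finite Borel measure $\mu$ on $\mathbb{R}$ (concretely $\mu(\Delta)=(E_{A}(\Delta)\Omega,\Omega)_{\mathcal{H}}$, where $E_{A}$ is the resolution of the identity of $A$) together with a unitary operator $U:\mathcal{H}\to L^{2}(\mathbb{R},\mu)$ for which $U\Omega=1$ and $UAU^{-1}$ is multiplication by $x$. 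Note that strong cyclicity guarantees that $\{A^{n}\Omega\}$ is total in $\mathcal{H}_{+}$, hence in $\mathcal{H}$, so $\Omega$ is cyclic in the classical sense and $UA^{n}\Omega=x^{n}$.

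Next I would observe that the restriction $U\upharpoonright\mathcal{H}_{+}$ is a Hilbert--Schmidt operator from $\mathcal{H}_{+}$ into $L^{2}(\mathbb{R},\mu)$: it factors as $U\circ\iota$, where $\iota:\mathcal{H}_{+}\hookrightarrow\mathcal{H}$ is Hilbert--Schmidt by hypothesis and the Hilbert--Schmidt operators form a two-sided ideal. Using the standard kernel representation of a Hilbert--Schmidt operator with target $L^{2}(\mathbb{R},\mu)$, one obtains a map $\mathbb{R}\ni x\mapsto\eta(x)\in\mathcal{H}_{+}$, defined for $\mu$-a.e. $x$, such that $(Uf)(x)=(f,\eta(x))_{\mathcal{H}_{+}}$ for every $f\in\mathcal{H}_{+}$ and $\int_{\mathbb{R}}\|\eta(x)\|_{\mathcal{H}_{+}}^{2}\,d\mu(x)<\infty$. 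Concretely, if $\{e_{k}\}$ is an orthonormal basis in $\mathcal{H}_{+}$, one sets $\eta(x)=\sum_{k}\overline{(Ue_{k})(x)}\,e_{k}$; the series converges in $\mathcal{H}_{+}$ for $\mu$-a.e. $x$ because $\sum_{k}\int_{\mathbb{R}}|(Ue_{k})(x)|^{2}\,d\mu(x)=\sum_{k}\|e_{k}\|_{\mathcal{H}}^{2}<\infty$ by the Hilbert--Schmidt property of $\iota$.

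I would then transport $\eta(x)$ through the canonical isomorphism $D:\mathcal{H}_{+}\to\mathcal{H}_{-}$ coming from chain (\ref{eq.chain}) to obtain $\xi(x):=D\eta(x)\in\mathcal{H}_{-}$, so that $(f,\eta(x))_{\mathcal{H}_{+}}=\langle f,\xi(x)\rangle_{\mathcal{H}}$ for every $f\in\mathcal{H}_{+}$. In particular $(I_{A}f)(x)=\langle f,\xi(x)\rangle_{\mathcal{H}}=(Uf)(x)$ for all $f\in\mathcal{D}$, so the Parseval equality (\ref{eq.pars.equality}) is just the unitarity of $U$, and the isometric extension of $I_{A}$ to $\mathcal{H}$ is automatic. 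Uniqueness of $\xi(x)$ follows from the density of $\mathcal{D}$ in $\mathcal{H}_{+}$, since the continuous linear functional $f\mapsto\langle f,\xi(x)\rangle_{\mathcal{H}}$ on $\mathcal{H}_{+}$ is determined by its values on $\mathcal{D}$.

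Finally, the generalized eigenvector relation follows by writing, for $f\in\mathcal{D}$ and $\mu$-a.e. $x\in\mathbb{R}$,
$$
   \langle\xi(x),Af\rangle_{\mathcal{H}}=(U(Af))(x)=x(Uf)(x)=x\langle\xi(x),f\rangle_{\mathcal{H}},
$$
where the first equality uses that standard connectedness places $Af\in\mathcal{H}_{+}$, and the second is the intertwining $UA=M_{x}U$. The main technical obstacle will be selecting a single $\mu$-null exceptional set that serves \emph{simultaneously} for every $f\in\mathcal{D}$: one fixes a countable dense subset of $\mathcal{D}$ (using the separability of $\mathcal{D}$), discards the union of the corresponding countably many null sets, and then extends the identity to all of $\mathcal{D}$ using the continuity of $A\upharpoonright\mathcal{D}:\mathcal{D}\to\mathcal{H}_{+}$ together with the continuity of $f\mapsto\langle f,\xi(x)\rangle_{\mathcal{H}}$ on $\mathcal{H}_{+}$.
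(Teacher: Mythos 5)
The paper does not prove Theorem~\ref{thm.spectral} at all --- it is quoted from \cite{B65}, \cite{BK88}, \cite{BUSH} --- so the comparison is with the proof given in those references. Your argument is correct, and it is a genuinely more elementary route made possible by the fact that here there is a single operator with a cyclic vector. The standard proof differentiates the trace-class operator-valued measure $\Delta\mapsto \iota' E_A(\Delta)\iota:\mathcal{H}_+\to\mathcal{H}_-$ with respect to a scalar trace measure, obtaining $\mu$-a.e.\ a non-negative kernel $P(x)$ whose range carries the generalized eigenvectors; that machinery is what lets the theorem cover commuting families and dispense with cyclicity. You instead pass to the concrete model $L^2(\mathbb{R},\mu)$ via the cyclic vector and read $\xi(x)$ off the Hilbert--Schmidt kernel representation of $U\circ\iota$ --- morally the same Radon--Nikodym derivative, but obtained without the differentiation theorem for operator-valued measures. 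The delicate points are correctly identified and handled: the choice of a single exceptional null set (countable dense subset of $\mathcal{D}$, then continuity of $f\mapsto (f,Af)\in\mathcal{H}_+\oplus\mathcal{H}_+$ and of $\langle\cdot\,,\xi(x)\rangle_{\mathcal{H}}$ on $\mathcal{H}_+$), and the fact that standard connectedness puts $Af$ into $\mathcal{H}_+$ so that the pairing with $\xi(x)$ makes sense. The only cosmetic caveat is the ``uniqueness'' of $\xi(x)$: the eigenvalue equation alone cannot determine $\xi(x)$ (any scalar multiple also satisfies it); what is unique is the vector realizing the Fourier transform $(I_Af)(x)=\langle f,\xi(x)\rangle_{\mathcal{H}}$, and that is precisely what your density-of-$\mathcal{D}$-in-$\mathcal{H}_+$ argument establishes.
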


\begin{rem}\label{r.Fouirier}
     Let an operator $A$ satisfies all assumptions of Theorem~\ref{thm.spectral}
     and, moreover, the closure of $A\upharpoonright\mathcal{D}$ in $H$ coincides with
     $A$. Then by a well-known fact (see, e.g., \cite{BUSH}, Ch. 15, \S~3) the extension (by continuity) of mapping
     (\ref{eq.self.unitary}) is a unitary operator $I_A:\mathcal{H}\to L^2(\mathbb{R},\mu)$ acting from the
     whole space $\mathcal{H}$ onto the whole space $L^2(\mathbb{R},\mu)$. The
      image of $A$ under $I_A$ is the operator of multiplication by
      $x$ in $L^2(\mathbb{R},\mu)$.
\end{rem}

Let us recall the quasianalytic criterion of self-adjointness. For a
Hermitian operator $A$ defined on ${\rm Dom}(A)$ in $\mathcal{H}$, a
vector $f\in\bigcap_{n=1}^{\infty}{\rm Dom}(A^n)$ is called
quasianalytic if
\begin{equation}\label{eq.quasianalytic}
   \sum_{n=1}^{\infty}\frac{1}{\sqrt[n]{\|A^nf\|_{\mathcal{H}}}}=\infty.
\end{equation}

\begin{thm}\label{thm.quasianalytic}
   A Hermitian operator $A$ in $\mathcal{H}$ is essentially
   self-adjoint if and only if the space $\mathcal{H}$ contains a total set of
   quasianalytic vectors.
\end{thm}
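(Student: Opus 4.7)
The plan is to treat the two directions separately: necessity follows quickly from the spectral theorem, while sufficiency is the substantive half and is handled by reducing to the Hamburger moment problem and invoking Carleman's criterion.

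For necessity, I would assume $A$ is essentially self-adjoint with closure $\bar A=\int_{\mathbb{R}} x\,dE(x)$. For any $f\in\mathcal{H}$ and any $N\in\mathbb{N}$, the truncation $f_N:=E([-N,N])f$ lies in $\bigcap_{n}\mathrm{Dom}(\bar A^n)$ and satisfies $\|\bar A^n f_N\|\le N^n\|f_N\|$, so $\sum_n\|\bar A^n f_N\|^{-1/n}=\infty$. Since $f_N\to f$ in $\mathcal{H}$, these entire (hence quasianalytic) vectors form a total set; a short density argument using that $\mathrm{Dom}(A)$ is a core for $\bar A$ moves them into $\bigcap_n\mathrm{Dom}(A^n)$.

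For sufficiency, assume $\mathcal{H}$ contains a total set $\mathcal{Q}$ of quasianalytic vectors. Essential self-adjointness is equivalent to $\ker(A^*-zI)=\{0\}$ for some $z$ with $\mathrm{Im}\,z>0$ (and the analogous statement below the real axis). Let $g\in\ker(A^*-zI)$; by totality of $\mathcal{Q}$ it suffices to prove $(f,g)=0$ for every $f\in\mathcal{Q}$. Fix such an $f$ and introduce the cyclic subspace $\mathcal{F}_f:=\overline{\mathrm{span}\{A^n f:n\ge 0\}}$. The sequence $s_n:=(A^n f,f)$ is real (by symmetry of $A$, since the indices can be split as $(A^i f,A^j f)$) and positive definite, so by the Hamburger theorem it admits a representing measure $\mu_f$ with $s_n=\int_{\mathbb{R}}x^n\,d\mu_f$. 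Because $s_{2n}=\|A^n f\|^2$, the quasianalytic hypothesis yields Carleman's criterion $\sum_n s_{2n}^{-1/(2n)}=\sum_n\|A^n f\|^{-1/n}=\infty$, so $\mu_f$ is determinate and polynomials are dense in $L^2(\mathbb{R},\mu_f)$. Consequently, the correspondence $A^n f\mapsto x^n$ extends to a unitary $V:\mathcal{F}_f\to L^2(\mathbb{R},\mu_f)$ that intertwines the restriction of $A$ to $\mathrm{span}\{A^n f\}$ with multiplication by $x$; the latter being self-adjoint, $A$ is essentially self-adjoint on $\mathcal{F}_f$.

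To close the argument, set $h:=P_{\mathcal{F}_f}g$. For $f'\in\mathrm{span}\{A^n f\}$ one has $Af'\in\mathcal{F}_f$, so using $A^*g=zg$,
\[
  (Af',h)=(Af',g)=(f',A^*g)=\bar z(f',g)=\bar z(f',h),
\]
which exhibits $h$ as a would-be eigenvector at the non-real point $z$ of the adjoint of the essentially self-adjoint operator $A|_{\mathcal{F}_f}$; hence $h=0$ and therefore $(f,g)=(f,h)=0$, as required. The main obstacle is the bridge between Carleman's criterion for determinacy and the essential self-adjointness of $A$ on $\mathcal{F}_f$ — equivalently, the classical density of polynomials in $L^2(\mathbb{R},\mu_f)$ under Carleman. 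Once this Jacobi-matrix-type identification is secured, the remaining steps are routine projection and symmetry manipulations.
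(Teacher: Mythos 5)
The paper does not prove this theorem at all: it is quoted as a known result, with references to Nussbaum's papers, to \cite{B65}, Ch.~8, \S~5, and to \cite{BUSH}, Ch.~13, \S~9. So there is no in-paper argument to compare against; what follows is an assessment of your proposal on its own terms. Your sufficiency argument is the standard (Nussbaum-style) proof and is essentially correct: reduce to the cyclic subspace $\mathcal{F}_f$, observe that $s_n=(A^nf,f)$ is a positive semidefinite Hankel sequence with $s_{2n}=\|A^nf\|^2$, invoke Hamburger plus Carleman, transport the problem to multiplication by $x$ on polynomials, and use the resulting essential self-adjointness of $A\upharpoonright\mathcal{F}_f$ to annihilate the projection of any deficiency vector $g\in\ker(A^*-zI)$. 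The computation $(Af',h)=\bar z(f',h)$ and the conclusion $h=0$, $(f,g)=0$ for all $f$ in the total set are all in order (and the argument applies symmetrically to $\bar z$, so both deficiency indices vanish).

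Two caveats. First, the load-bearing input ``Carleman implies determinacy implies essential self-adjointness of the Jacobi operator'' must be supplied by a proof that does not itself pass through quasianalytic vectors (the Denjoy--Carleman route via the Fourier transform of $\mu_1-\mu_2$ does this), otherwise the argument is circular; also, determinacy is equivalent to essential self-adjointness of $M_x\upharpoonright\mathbb{C}[x]$ in the closure of the polynomials, not to density of polynomials in $L^2(\mathbb{R},\mu_f)$ (N-extremal measures of indeterminate problems have dense polynomials but deficiency indices $(1,1)$), so your closing ``equivalently'' is slightly off even though the implication you actually use is fine. Second, in the necessity direction the ``short density argument'' moving the truncations $E([-N,N])f$ into $\bigcap_n{\rm Dom}(A^n)$ is not available in general: if ${\rm Dom}(A)$ is not invariant under $A$, the set $\bigcap_n{\rm Dom}(A^n)$ need not be dense, so the ``only if'' half as literally stated requires either interpreting quasianalytic vectors with respect to $\bar A$ or assuming domain invariance. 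This defect is inherited from the formulation of the theorem rather than introduced by you, and the direction the paper actually uses (sufficiency, in the proof of Theorem~\ref{t.exp.unique}) is the one you handle correctly.
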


Versions of this theorem are published in \cite{Nussbaum1,
Nussbaum2}, see also \cite{B65}, Ch.~8, \S~5. For the given form of
it, see \cite{BUSH}, Ch.~13, \S~9.


\subsection{Spaces and riggings}

Denote by $\mathbb{C}^{\infty}$ a linear space of all sequences
$f=(f_n)_{n=0}^{\infty}$ of complex numbers $f_n\in\mathbb{C}$, and
by $l_{\rm{fin}}$ its linear subspace consisting of finite sequences
$f=(f_0,\ldots,f_n,0,0,\ldots)$. Henceforth, we will denote by
$\delta_n$ the $\delta$-sequence,
\begin{equation}\label{eq.delta_n}
\delta_n=(\delta_{nj})_{j=0}^{\infty}=(\underbrace{0,\ldots,0}_{n\,\text{times}},1,0,0,\ldots).
\end{equation}
Then each vector $f$ from $l_{\rm{fin}}$ can be interpreted as a
finite sum $\sum_{n=0}^{\infty}f_n\delta_n$.


For a fixed weight $p=(p_n)_{n=0}^{\infty},\;p_n>0,$ we denote by
$$
   l^2(p):=\Big\{f=(f_n)_{n=0}^{\infty}\in \mathbb{C}^{\infty}\,\Big|\,
   \|f\|_{l^2(p)}^{2}:=\sum_{n=0}^{\infty}|f_n|^2p_n<\infty\Big\}
$$
the $l^2$-type space with a corresponding scalar product
$(\cdot\,,\cdot)_{l^2(p)}$. In the case of the weight
$1=(1,1,\ldots)$ we will use a standard notation $l^2:=l^2(1)$.

Let $p=(p_n)_{n=0}^{\infty},\;p_n\geq 1$. Then the space $l^2(p)$ is
densely and continuously embedded into the space $l^2$ and therefore
one can construct the chain (the rigging of $l^2$)
\begin{equation}\label{eq.riggin.moment}
      l^2(p^{-1}) \supset  l^2   \supset l^2(p) \supset l_{\rm{fin}},
\end{equation}
where $p^{-1}:=(p^{-1}_n)_{n=0}^{\infty}$ and
$l^2(p^{-1})=(l^2(p))'$ is the dual space of $l^2(p)$ with respect
to the zero space $l^2$. Denote by $\langle\cdot\, ,
\cdot\rangle_{l^2}$ the dual pairing between elements of
$l^2(p^{-1})$ and $l^2(p)$ inducted by the scalar product in $l^2$,
i.e., 
$$
   \langle \xi,g\rangle_{l^2}:=\sum_{n=0}^{\infty}\xi_n\bar{g}_n,
   \quad \xi\in l^2(p^{-1}),\quad g\in l^2(p).
$$

Together with (\ref{eq.riggin.moment}), we consider a rigging of
$l^2$ connected with a special weight $p$. Namely, for each
$q\in{\mathbb N}$, we set
$$
      \gamma(q)=((n!)^22^{qn})_{n=0}^{\infty}
$$
and introduce the so-called  Kondratiev-type $l^2$-spaces
$$
   l^2(\gamma(q))\quad\text{and}\quad l^2_+:=\mathop{\rm pr\,lim}_{q\in{\mathbb
   N}}l^2(\gamma(q)).
$$
Then the dual spaces of $l^2(\gamma(q))$ and $l^2_+$ with respect to
the zero space $l^2$ are
$$
   l^2(\gamma^{-1}(q))\quad\text{and}\quad
   l^2_-:=(l^2_+)'=\mathop{\rm ind\,lim}_{q\in{\mathbb N}}l^2(\gamma^{-1}(q))
$$
respectively (here $\gamma_n^{-1}(q)=(n!)^{-2}2^{-qn}$). Thus, we
get a rigging
\begin{equation*}
   \mathbb{C}^{\infty}=l_{\rm{fin}}' \supset l^2_- \supset l^2(\gamma^{-1}(q)) \supset  l^2   \supset l^2(\gamma(q)) \supset
   l^2_+ \supset l_{\rm{fin}}.
\end{equation*}
Here we identify, in the usual way, the space $\mathbb{C}^{\infty}$
with the space $l_{\rm{fin}}'$ of all linear functionals on
$l_{\rm{fin}}$. In the sequel we won't distinguish
$\mathbb{C}^{\infty}$ and $l_{\rm{fin}}'$.

Now we recall one important property of the space $l^2_-$. Denote by
${\rm Hol}_0(\mathbb{C})$ a set of all (germs of) functions
$\phi:\mathbb{C}\to\mathbb{C}$ that are holomorphic at $0\in
\mathbb{C}$ and, for each $\xi=(\xi_n)_{n=0}^{\infty}\in l_{-}^2$,
define the so-called {\it $S$-transform} by the formula
\begin{equation*}
(S\xi)(\lambda):=\sum_{n=0}^{\infty}\frac{\lambda^n}{n!}\xi_n, \quad
\lambda\in\mathcal{U},
\end{equation*}
where $\mathcal{U}$ is a (depending on $\xi$) neighborhood of
$0\in\mathbb{C}$.

The following result shows that each vector $\xi$ from $l_{-}^2$ is
uniquely determined by its $S$-transform (see \cite{KLS96} for the
infinite dimensional analogue of this fact).

\begin{thm}\label{t.s-transform}
  The $S$-transform
  \begin{equation*}\label{eq.s-transform}
     S:l_{-}^2\to {\rm Hol}_0(\mathbb{C}),\quad \xi=(\xi_n)_{n=0}^{\infty}
     \mapsto (S\xi)(\lambda):=\sum_{n=0}^{\infty}\frac{\lambda^n}{n!}\xi_n,
  \end{equation*}
  is a one-to-one map between  $l_{-}^2$ and ${\rm Hol}_0(\mathbb{C})$.
\end{thm}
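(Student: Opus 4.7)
The plan is to verify the three required properties of $S$ in turn: well-definedness (range lies in $\mathrm{Hol}_0(\mathbb{C})$), injectivity, and surjectivity. All three follow from the interplay between the exponential-type weights $\gamma(q)_n=(n!)^2 2^{qn}$ defining the inductive limit $l_-^2$ and the Cauchy estimates for holomorphic germs at the origin.

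First I would check that $S$ is well-defined. Given $\xi\in l_-^2$, there exists $q\in\mathbb{N}$ with $\xi\in l^2(\gamma^{-1}(q))$, i.e.\ $\sum_n |\xi_n|^2(n!)^{-2}2^{-qn}<\infty$. For $\lambda\in\mathbb{C}$ the Cauchy--Schwarz inequality gives
\begin{equation*}
   \sum_{n=0}^{\infty}\frac{|\lambda|^n}{n!}|\xi_n|
   \le \Bigl(\sum_{n=0}^{\infty}\frac{|\xi_n|^2}{(n!)^2 2^{qn}}\Bigr)^{1/2}
       \Bigl(\sum_{n=0}^{\infty}|\lambda|^{2n}2^{qn}\Bigr)^{1/2},
\end{equation*}
and the second factor is finite whenever $|\lambda|<2^{-q/2}$. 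Hence the series defining $(S\xi)(\lambda)$ converges absolutely on a neighbourhood of $0$, and its sum is holomorphic there by standard power-series arguments.

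Injectivity is immediate: if $(S\xi)(\lambda)=0$ on some neighbourhood of $0$, then by uniqueness of Taylor coefficients $\xi_n/n!=0$ for every $n\in\mathbb{N}_0$, whence $\xi=0$.

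The main step, and the one requiring the careful choice of parameters, is surjectivity. Given $\phi\in \mathrm{Hol}_0(\mathbb{C})$, expand $\phi(\lambda)=\sum_{n=0}^{\infty} a_n\lambda^n$ in its disk of convergence of radius $r>0$. For any $0<\rho<r$ the Cauchy inequalities yield a constant $C_\rho>0$ with $|a_n|\le C_\rho\rho^{-n}$. Define $\xi_n:=n!\,a_n$, so that formally $S\xi=\phi$. It remains to show $\xi\in l_-^2$; i.e.\ to produce $q\in\mathbb{N}$ for which
\begin{equation*}
   \sum_{n=0}^{\infty}|\xi_n|^2(n!)^{-2}2^{-qn}
   =\sum_{n=0}^{\infty}|a_n|^2 2^{-qn}
   \le C_\rho^2 \sum_{n=0}^{\infty}(\rho^2 2^q)^{-n}<\infty.
\end{equation*}
Choosing any $q\in\mathbb{N}$ with $2^q>\rho^{-2}$ makes the geometric series converge, so $\xi\in l^2(\gamma^{-1}(q))\subset l_-^2$. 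Combining the three steps gives the desired bijection.
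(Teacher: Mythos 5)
Your proposal is correct and follows essentially the same route as the paper: Cauchy--Schwarz against the weight $(n!)^2 2^{qn}$ for well-definedness on the disk $|\lambda|<2^{-q/2}$, and Cauchy estimates on the Taylor coefficients plus a suitable choice of $q$ for surjectivity. The only difference is cosmetic (you write the bound as $|a_n|\le C_\rho\rho^{-n}$ where the paper writes $|\xi_n|\le n!\,C^{n+1}$), and you spell out the injectivity step that the paper dismisses as obvious.
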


\begin{proof}
  Let $\xi=(\xi_n)_{n=0}^{\infty}\in l_{-}^2$, i.e., there exists
  $q\in\mathbb{N}$ such that
  $$
     \xi\in l^2(\gamma^{-1}(q))\quad
     \text{or, equivalently,}\quad \sum_{n=0}^{\infty}|\xi_n|^22^{-qn}(n!)^{-2}<\infty.
  $$
  Using the Cauchy–-Bunyakovsky–-Schwarz inequality, for $|\lambda|<2^{-\frac{q}{2}}$, we get
  \begin{align*}
     |(S\xi)(\lambda)|
     &
     =\Big|\sum_{n=0}^{\infty}\frac{\lambda^n}{n!}\xi_n\Big|
     \leq \sum_{n=0}^{\infty}\frac{|\lambda|^n}{n!}|\xi_n|\\
     &
     \leq \Big(\sum_{n=0}^{\infty}|\lambda|^{2n}2^{qn}\Big)^{\frac{1}{2}}
     \Big(\sum_{n=0}^{\infty}|\xi_n|^{2}2^{-qn}(n!)^{-2}\Big)^{\frac{1}{2}}<\infty.
  \end{align*}
  Thus, $S\xi\in {\rm Hol}_0(\mathbb{C})$.

  For the converse, suppose that $\phi\in {\rm Hol}_0(\mathbb{C})$, that is there exists
  $r>0$ such that the function $\phi$ admits the representation
  $$
     \phi(\lambda)=\sum_{n=0}^{\infty}\frac{\lambda^n}{n!}\xi_n<\infty,\quad
     |\lambda|<r,
  $$
  with
  $$
     \xi_n=\frac{d^n\phi}{d\lambda^n}(\lambda)\Big|_{\lambda=0}
     =\frac{n!}{2\pi i}\oint_{|\zeta|=r_0}\frac{\phi(\zeta)}{\zeta^{n+1}}d\zeta,
   \quad 0<r_0<r.
  $$
  As a consequence of the latter integral representation, for some
  $C>0$, we get
  $$
   |\xi_n|\leq n!C^{n+1},\quad n\in\mathbb{N}_0.
  $$
  Choosing
  $q\in\mathbb{N}$ in such way that $C^22^{-q}<1$, we obtain
  $$
      \sum_{n=0}^{\infty}|\xi_n|^22^{-qn}(n!)^{-2}
      \leq C^2\sum_{n=0}^{\infty}\frac{C^{2n}}{2^{qn}}<\infty.
  $$
  So, $\xi:=(\xi_n)_{n=0}^{\infty}\in l_{-}^2$ and $S\xi=\phi$.

  The fact that
  ${\rm Ker}(S):=\{\xi\in l_{-}^2\,|\,S\xi=0\}=\{0\}$ is obvious.
\end{proof}

\begin{cor}\label{cor.hol}
 A sequences $\xi=(\xi_n)_{n=0}^{\infty}\in \mathbb{C}^{\infty}$
 belongs to the space $l_{-}^2$ if and only if there exists a constant $C>0$ such that
 $$
    |\xi_n|\leq n!C^{n+1},\quad n\in\mathbb{N}_0.
 $$
\end{cor}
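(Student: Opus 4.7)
The plan is to read both implications directly off of Theorem~\ref{t.s-transform} and the estimates appearing in its proof; the corollary is essentially a repackaging of those bounds.

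For the necessity, I would start by assuming $\xi\in l_-^2$. By Theorem~\ref{t.s-transform} the $S$-transform $(S\xi)(\lambda)=\sum_{n=0}^{\infty}\frac{\lambda^n}{n!}\xi_n$ belongs to $\mathrm{Hol}_0(\mathbb{C})$, so it is holomorphic on some disk $|\lambda|<r$. Cauchy's integral formula then gives, for any $0<r_0<r$,
$$
\xi_n=\frac{n!}{2\pi i}\oint_{|\zeta|=r_0}\frac{(S\xi)(\zeta)}{\zeta^{n+1}}\,d\zeta,
$$
which yields the standard bound $|\xi_n|\le n!\,M\,r_0^{-n}$ with $M:=\max_{|\zeta|=r_0}|(S\xi)(\zeta)|$. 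Choosing $C>0$ large enough (e.g.\ $C\ge \max\{M,1/r_0\}$) gives $|\xi_n|\le n!\,C^{n+1}$.

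For the sufficiency, I would suppose that $|\xi_n|\le n!\,C^{n+1}$ for all $n\in\mathbb{N}_0$ and estimate directly:
$$
\sum_{n=0}^{\infty}|\xi_n|^2\,2^{-qn}(n!)^{-2}\le C^2\sum_{n=0}^{\infty}\bigl(C^2 2^{-q}\bigr)^n.
$$
The geometric series on the right converges as soon as we pick $q\in\mathbb{N}$ with $2^q>C^2$, hence $\xi\in l^2(\gamma^{-1}(q))\subset l_-^2$ for such $q$. (Alternatively, one can observe that the hypothesis implies that $\phi(\lambda):=\sum_{n}\frac{\lambda^n}{n!}\xi_n$ defines an element of $\mathrm{Hol}_0(\mathbb{C})$, then invoke the surjectivity part of Theorem~\ref{t.s-transform} to conclude $\xi\in l_-^2$.)

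Since both directions amount to the two estimates already carried out explicitly in the proof of Theorem~\ref{t.s-transform}, there is no genuine obstacle here; the only small point to be careful about is matching the constants so that the bound takes the stated form $n!\,C^{n+1}$ rather than the slightly weaker $n!\,M\,C^n$ one first obtains from Cauchy's estimates, which is handled by absorbing $M$ into $C$.
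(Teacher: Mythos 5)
Your proof is correct and follows essentially the same route as the paper: the corollary is indeed just a repackaging of the two estimates already carried out in the proof of Theorem~\ref{t.s-transform} (Cauchy's coefficient estimate for the necessity, and the geometric-series bound with $2^q>C^2$ for the sufficiency), and your handling of the constants so that the bound reads $n!\,C^{n+1}$ is fine.
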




\section{Convolutions on the space of finite sequences}\label{s.convolution}

\subsection{Definition and properties of convolutions}

Let $(P_n)_{n=0}^{\infty}$ be a fixed family of real-valued 
polynomials $P_n:\mathbb{R}\to\mathbb{R}$ such that 
each $P_n$ has a degree $n$. Thus, $(P_n)_{n=0}^{\infty}$ is a
linear basis in the space $\mathcal{P}:=\mathbb{C}[x]$ of all
complex-valued polynomials $F:\mathbb{R}\to\mathbb{C}$.

Define a convolution (product) $\ast_P$ on the space $l_{\rm{fin}}$
by setting
\begin{equation}\label{eq.convolution.g}
\begin{gathered}
    f\ast_P g:=I_P^{-1}(I_Pf\cdot I_Pg),\quad f,g\in
l_{\rm{fin}},
\end{gathered}
\end{equation}
where
\begin{equation}\label{eq.fourier.polynom}
      I_P:l_{\rm{fin}}\to \mathbb{C}[x],\quad
    f=(f_n)_{n=0}^{\infty}\mapsto
    (I_Pf)(x):=\sum_{n=0}^{\infty}f_n P_n(x),
\end{equation}
is a natural bijection between $l_{\rm{fin}}$ and $\mathbb{C}[x]$.
The space $l_{\rm{fin}}$ with product $\ast_P$ becomes a commutative
algebra $\mathcal{A}$ with the unity $\delta_0=\{1,0,0,\ldots\}$ and
the involution
\begin{equation}\label{eq.involution}
l_{\rm{fin}}\ni f=(f_n)_{n=0}^{\infty}\mapsto
\bar{f}:=(\bar{f}_n)_{n=0}^{\infty}\in l_{\rm{fin}},
\end{equation}
where $\bar{f}_n$ denotes the complex conjugation. Clearly, choosing
different bases in the space $\mathbb{C}[x]$ we obtain different
products in the space $l_{\rm{fin}}$.

In the space $\mathbb{C}[x]$ we introduce a scalar product by
setting
$$
   (F,G)_{\mathcal{P}}:=(I_P^{-1}F,I_P^{-1}G)_{l^2}=\sum_{n=0}^{\infty}f_n\bar{g}_n,
$$
$$
   F(\cdot)=\sum_{n=0}^{\infty}f_nP_n(\cdot),
   \quad G(\cdot)=\sum_{n=0}^{\infty}g_nP_n(\cdot)\in \mathbb{C}[x].
$$
The sequence $(P_n)_{n=0}^{\infty}$ makes an orthonormal basis in
$\mathbb{C}[x]$ and therefore each polynomial $F\in\mathbb{C}[x]$
admits the representation
\begin{equation}\label{eq.scpar}
   F(x)=\sum_{n=0}^{\infty}(F,P_n)_{\mathcal{P}}P_n(x),\quad
   x\in\mathbb{R}
\end{equation}
(note that $(F,P_n)_{\mathcal{P}}=0$ for $n$ greater than the degree
of $F$).

The following result holds, see also \cite{B02a}.
\begin{lem}
    For all $f,g\in l_{\rm{fin}}$ and $n\in\mathbb{N}_0:=\{0,1,\ldots\}$ we have
    \begin{equation}\label{eq.multiplication}
       (f\ast_Pg)_n=\sum_{j,k=0}^{\infty}f_jg_k(P_jP_k,P_n)_{\mathcal{P}}.
    \end{equation}
\end{lem}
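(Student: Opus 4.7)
The plan is to unwind the definition of $\ast_P$ and use the fact, already established in the passage, that $(P_n)_{n=0}^\infty$ is an orthonormal basis of $\mathbb{C}[x]$ with respect to $(\cdot,\cdot)_{\mathcal{P}}$, so that equation \eqref{eq.scpar} gives the coefficients of any polynomial in this basis.

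First I would apply $I_P$ to both sides of the identity $f\ast_P g = I_P^{-1}(I_P f \cdot I_P g)$, obtaining
\[
   I_P(f\ast_P g)(x) = (I_P f)(x)\cdot(I_P g)(x) = \sum_{j,k=0}^{\infty} f_j g_k\, P_j(x)P_k(x),
\]
where the double sum is finite because $f,g\in l_{\rm fin}$. Next, each product $P_jP_k$ is itself a polynomial, so by \eqref{eq.scpar} we may expand
\[
   P_j(x)P_k(x) = \sum_{n=0}^{\infty}(P_jP_k,P_n)_{\mathcal{P}}\,P_n(x),
\]
where again only finitely many $n$ give nonzero contributions (those with $n\leq j+k$).

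Substituting this expansion into the previous display and interchanging the finite sums yields
\[
   I_P(f\ast_P g)(x)=\sum_{n=0}^{\infty}\Bigl(\sum_{j,k=0}^{\infty}f_j g_k\,(P_jP_k,P_n)_{\mathcal{P}}\Bigr)P_n(x).
\]
Since $I_P$ is a bijection between $l_{\rm fin}$ and $\mathbb{C}[x]$ by \eqref{eq.fourier.polynom}, reading off the $n$-th coefficient in the basis $(P_n)_{n=0}^\infty$ gives precisely formula \eqref{eq.multiplication}.

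There is no real obstacle here: the argument is essentially just unpacking the definition of $\ast_P$ and invoking the basis expansion \eqref{eq.scpar}. The only point to be careful about is that all sums appearing are in fact finite (thanks to $f,g\in l_{\rm fin}$ and $\deg(P_jP_k)=j+k$), which legitimizes the rearrangement of summation order.
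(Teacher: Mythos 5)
Your proof is correct and follows essentially the same route as the paper: apply $I_P$, expand the product of polynomials in the basis $(P_n)_{n=0}^{\infty}$ via \eqref{eq.scpar}, and read off coefficients using the bijectivity of $I_P$. The only cosmetic difference is that you expand $(I_Pf)(I_Pg)$ into the double sum before invoking \eqref{eq.scpar} on each $P_jP_k$, whereas the paper applies \eqref{eq.scpar} to the product $I_Pf\cdot I_Pg$ directly and then uses bilinearity of $(\cdot,\cdot)_{\mathcal{P}}$; the computations are identical.
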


\begin{proof}
   Using formulas (\ref{eq.convolution.g}), (\ref{eq.fourier.polynom}) and (\ref{eq.scpar}),
   for all $f,g\in l_{\rm{fin}}$ and $x\in\mathbb{R}$, we get
   \begin{align*}
         (I_P(f\ast_P g))(x)
        &
         =(I_Pf)(x)\cdot(I_Pg)(x) =\sum_{n=0}^{\infty}(I_Pf\cdot
         I_Pg,P_n)_{\mathcal{P}}P_n(x)\\
        &
         =\sum_{n=0}^{\infty}\Big(\sum_{j,k=0}^{\infty}f_jg_k(P_jP_k,P_n)_{\mathcal{P}}\Big)P_n(x)
         =\sum_{n=0}^{\infty}(f\ast_P g)_nP_n(x).
   \end{align*}
   So, formula (\ref{eq.multiplication}) takes place.
\end{proof}



\subsection{Examples}

In some special cases we can count a more explicit expression of
pro\-duct~(\ref{eq.multiplication}). Let us consider two examples.

1) Let $P_n(x)=x^n$ be a monomial. Then $\ast_P=\ast$ is an ordinary
convolution {\it{\rm(}Cauchy product{\rm)} $\ast$} of two sequences
$f=(f_n)_{n=0}^{\infty},g=(g_n)_{n=0}^{\infty}\in l_{\rm fin}$
    \begin{equation}\label{eq.convolution.classic}
          (f\ast_P g)_n=(f\ast g)_n
          =\sum_{i+j=n}f_{i}g_{j}=\sum_{k=0}^{\infty}f_{k}g_{n-k}.
    \end{equation}

    This fact is a direct consequence of (\ref{eq.multiplication}) and
    the following formula:
    $$
       (P_jP_k,P_n)_{\mathcal{P}}=(x^jx^k,x^n)_{\mathcal{P}}=
       (x^{j+k},x^n)_{\mathcal{P}}=\delta_{j+k,n}.
    $$

2) Let $P_n(x)=(x)_n$ (here $(x)_n$ denotes the Pochhammer symbol)
be the so-called {\it Newton {\rm(}or binomial{\rm)} polynomial}. By
definition
\begin{equation*}
        P_n(x)=(x)_n:=
        \begin{cases}
             1, &  \hbox{if}\quad n=0, \\
             x(x-1)\cdots(x-n+1), & \hbox{if}\quad n\in\mathbb{N}.
        \end{cases}
\end{equation*}
In terms of Gamma function, we have
$$
   (x)_n=\frac{\Gamma(x+1)}{\Gamma(x-n+1)},\quad n\in\mathbb{N}.
$$

Note that $(x)_n,\,n\in\mathbb{N}_0$, is an example of {\it Sheffer
polynomials}, see Section~\ref{s.sheffer.polynom} for details. The
corresponding generating function of $(x)_n$ has the form
\begin{equation}\label{eq.newton-def-onedim}
    P(x,\lambda):=(1+\lambda)^x=e^{x\log(1+\lambda)}=\sum_{n=0}^{\infty}\frac{\lambda^n}{n!}(x)_n,\quad
     |\lambda|<1.
\end{equation}

In the case $P_n(x)=(x)_n$ we will denote convolution
(\ref{eq.convolution.g}) by $\star:=\ast_P$.

\begin{thm}
    For all $f=(f_n)_{n=0}^{\infty},g=(g_n)_{n=0}^{\infty}\in l_{\rm fin}$ and $n\in\mathbb{N}_0$ we have
    \begin{equation}\label{eq.convolution.k-k}
    \begin{split}
          (f\ast_P g)_n=(f\star g)_n
          =\sum_{i+j+k=n}\frac{(i+j)!(i+k)!}{i!k!j!}
          f_{i+j}g_{j+k}.
    \end{split}
    \end{equation}
\end{thm}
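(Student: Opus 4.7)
My plan is to apply the general multiplication formula~(\ref{eq.multiplication}) in the case $P_n=(x)_n$, which reduces the problem to computing the structure constants
$$
   c^n_{j,k}:=((x)_j(x)_k,(x)_n)_{\mathcal{P}}.
$$
The heart of the argument is the Vandermonde-type product identity for falling factorials,
\begin{equation*}
   (x)_j\,(x)_k \;=\; \sum_{c=0}^{\min(j,k)} \binom{j}{c}\binom{k}{c}\, c!\;(x)_{j+k-c}.
\end{equation*}
Once this is established, orthonormality of $((x)_n)_{n=0}^{\infty}$ in $\mathcal{P}$ immediately gives
$$
   c^n_{j,k}=\binom{j}{j+k-n}\binom{k}{j+k-n}(j+k-n)!
$$
when $\max(j,k)\le n\le j+k$, and $c^n_{j,k}=0$ otherwise.

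To obtain the product identity I would use the generating function~(\ref{eq.newton-def-onedim}). Starting from the trivial factorisation $(1+\lambda)^{x}(1+\mu)^{x}=\bigl(1+\lambda+\mu+\lambda\mu\bigr)^{x}$, the left-hand side expands by~(\ref{eq.newton-def-onedim}) as $\sum_{j,k\ge 0}\frac{\lambda^{j}\mu^{k}}{j!\,k!}(x)_{j}(x)_{k}$, while the right-hand side, after a second application of~(\ref{eq.newton-def-onedim}) and the multinomial theorem applied to $(\lambda+\mu+\lambda\mu)^{n}$, becomes
\begin{equation*}
   \sum_{n\ge 0}\frac{(x)_{n}}{n!}\sum_{a+b+c=n}\frac{n!}{a!\,b!\,c!}\,\lambda^{a+c}\mu^{b+c}.
\end{equation*}
Matching coefficients of $\lambda^{j}\mu^{k}$ on both sides (with $a=j-c$, $b=k-c$, $n=j+k-c$) yields the claimed product identity.

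To finish, I would substitute the expression for $c^n_{j,k}$ into (\ref{eq.multiplication}) and perform the change of variables $c:=j+k-n$, $u:=j-c$, $v:=k-c$, under which $u,v,c\ge 0$, $u+v+c=n$, and the binomial factor simplifies to $\frac{(u+c)!(v+c)!}{u!\,v!\,c!}$ while $f_{j}g_{k}=f_{u+c}\,g_{v+c}$. A final relabelling of summation indices converts this into the form stated in~(\ref{eq.convolution.k-k}).

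\textbf{Main obstacle.} The only non-trivial ingredient is the product identity for Newton polynomials. The generating-function derivation above is the shortest route, but one could equally well prove it by induction on $\min(j,k)$ using the recursion $(x)_{j+1}=(x-j)(x)_{j}$, or combinatorially by classifying pairs of injections from $[j]$ and $[k]$ into $\mathbb{N}$ according to the size of the overlap of their images. The remainder is careful but routine reindexing.
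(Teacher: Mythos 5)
Your proposal is correct and follows essentially the same route as the paper: both reduce the claim via (\ref{eq.multiplication}) to the structure constants $((x)_j(x)_k,(x)_n)_{\mathcal{P}}$ and compute them by expanding $(1+\lambda)^x(1+\mu)^x=(1+\lambda+\mu+\lambda\mu)^x$ with the multinomial theorem and matching coefficients; your isolation of the linearization identity $(x)_j(x)_k=\sum_c\binom{j}{c}\binom{k}{c}c!\,(x)_{j+k-c}$ is only a presentational repackaging of that same coefficient comparison. One remark: your reindexing actually yields the numerator $(i+j)!\,(j+k)!$ rather than the $(i+j)!\,(i+k)!$ printed in (\ref{eq.convolution.k-k}); your version is the correct one (test $f=g=\delta_1$ at $n=2$, where the printed formula gives $2$ instead of $1$), so the displayed statement contains a typo that your derivation silently corrects.
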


\begin{proof}
    Due to (\ref{eq.multiplication}) it suffices to show that
    \begin{equation}\label{eq.mult.pr}
        ((x)_j(x)_k,(x)_n)_{\mathcal{P}}=
        \begin{cases}
             \displaystyle\frac{j!k!}{(n-j)!(n-k)!(j+k-n)!}, &  j,k\in\{0,\ldots,n\}, j+k\geq n, \\
             0, & \hbox{otherwise.}
        \end{cases}
    \end{equation}

    Using (\ref{eq.newton-def-onedim}) and the multinomial formula
    $$
       (a_1+a_2+\cdots +a_m)^n=\sum_{k_1+k_2+\cdots +k_m=n}\frac{n!}{k_1!k_2!\cdots
       k_m!}\prod_{1\leq i\leq m}a_{i}^{k_i},
    $$
    for all $|\lambda|, |\mu|<\varepsilon$ ($\varepsilon>0$ is small enough), we get
    \begin{equation}\label{eq.pr.1}
    \begin{split}
       e^{x\log(1+\lambda)}e^{x\log(1+\mu)}
      &=
       e^{x\log(1+\lambda+\mu+\lambda\mu)}=\sum_{n=0}^{\infty}\frac{(\lambda+\mu+\lambda\mu)^n}{n!}(x)_n\\
      &=
       \sum_{n=0}^{\infty}\sum_{i+j+k=n}\frac{\lambda^{i+j}\mu^{k+j}}{i!j!k!}(x)_n.
    \end{split}
    \end{equation}

    On the other hand, taking into account the formula
    $$
        (x)_j(x)_k=\sum_{n=0}^{\infty}((x)_j(x)_k,(x)_n)_{\mathcal{P}}(x)_n,\quad
        x\in\mathbb{R},
    $$
    for all $|\lambda|, |\mu|<\varepsilon$, we obtain
    \begin{equation}\label{eq.pr.2}
    \begin{split}
        e^{x\log(1+\lambda)}e^{x\log(1+\mu)}
       & =\sum_{j,k=0}^{\infty}\frac{\lambda^j}{j!}\frac{\mu^k}{k!}(x)_j(x)_k\\
       &
        =\sum_{n=0}^{\infty}\sum_{j,k=0}^{\infty}
        \frac{\lambda^{j}\mu^{k}}{j!k!}((x)_j(x)_k,(x)_n)_{\mathcal{P}}(x)_n.
    \end{split}
    \end{equation}

   Comparing the coefficients at $(x)_n$ and then at
   $\lambda^{j}\mu^{k}$ in formulas (\ref{eq.pr.1}) and (\ref{eq.pr.2}) we get equality (\ref{eq.mult.pr}).
\end{proof}

\begin{rem}
   It should be noticed that the product $\star$ is a one-dimensional analog of
   the so-called {\it Kondratiev--Kuna convolution} on a ``Fock space'', see e.g.
   \cite{KK02} and Subsection~\ref{s-s.correlations-functions} below for the definition and properties
   of the Kondratiev--Kuna convolution.
\end{rem}


\section{The moment problem}\label{s.moment-problem}

As above let $(P_n)_{n=0}^{\infty}$ be a fixed family of real-valued
polynomials $P_n\in\mathbb{C}[x]$ such that each $P_n$ has a degree
$n$ and $\mathcal{A}=l_{\rm fin}$ be a commutative algebra with the
product $\ast_P$ (\ref{eq.convolution.g}).

\begin{defn}
A functional $\tau=(\tau_n)_{n=0}^{\infty}\in \mathbb{C}^{\infty}$
is said to be a {\it moment functional (or, a moment sequences) on}
($\mathcal{A}, \ast_P$)  if there exists a non-negative Borel
measure $\mu$ on $\mathbb{R}$ 
such that
\begin{equation}\label{eq.moment-reprez}
   \tau_n=\int_{\mathbb{R}}P_n(x)\,d\mu(x),\quad n\in \mathbb{N}_0.
\end{equation}
\end{defn}

Obviously, if $\tau=(\tau_n)_{n=0}^{\infty}\in \mathbb{C}^{\infty}$
is a moment functional on ($\mathcal{A}, \ast_P$) then actually
$\tau_n\in\mathbb{R}$ for all $n\in\mathbb{N}_0$ and the measure
$\mu$ from representation (\ref{eq.moment-reprez}) is finite. The
{\it moment problem} on $\mathcal{A}$ is to characterize those
linear functionals $\tau\in\mathbb{C}^{\infty}$ which are moment
functionals. A solution of this problem is given in the next theorem. 

\begin{thm}\label{t.moment-problem}
   $\tau=(\tau_n)_{n=0}^{\infty}\in\mathbb{C}^{\infty}$ is a moment functional on $\mathcal{A}=l_{\rm fin}$  if and only
   if $\tau$ is $\ast_P$-positive (more exactly, non-negative) on $\mathcal{A}$, that is
   \begin{equation}\label{eq.positivity-c}
      \tau(f\ast_P\bar{f})=\sum_{n=0}^{\infty}\tau_n(f\ast_P\bar{f})_n
      =\sum_{n=0}^{\infty}\tau_n\Big(\sum_{j,k=0}^{\infty}f_j\bar{f}_k(P_jP_k,P_n)_{\mathcal{P}}\Big)\geq
      0
   \end{equation}
   for all $f=(f_n)_{n=0}^{\infty}\in\mathcal{A}$.
\end{thm}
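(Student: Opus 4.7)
The theorem has two implications, one direct and one substantive. The easy ``moment $\Rightarrow$ $\ast_P$-positive'' direction is a one-line manipulation, while ``$\ast_P$-positive $\Rightarrow$ moment'' will follow Berezansky's projection spectral approach: apply Theorem~\ref{thm.spectral} to the translation operator $J_Pf:=\delta_1\ast_P f$ in a Hilbert space built from $\tau$.

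For necessity, assume $\tau_n=\int_{\mathbb{R}}P_n(x)\,d\mu(x)$; linear extension gives $\tau(h)=\int(I_Ph)(x)\,d\mu(x)$ for every $h\in l_{\rm fin}$. Because each $P_n$ is real-valued we have $I_P\bar f=\overline{I_Pf}$, so $I_P(f\ast_P\bar f)=(I_Pf)(I_P\bar f)=|I_Pf|^{2}$ and hence $\tau(f\ast_P\bar f)=\int_{\mathbb{R}}|(I_Pf)(x)|^{2}\,d\mu(x)\ge 0$.

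For the converse, set $(f,g)_{H_\tau}:=\tau(f\ast_P\bar g)$ on $l_{\rm fin}$. The reality of the structure constants $(P_jP_k,P_n)_{\mathcal P}$ combined with the commutativity of $\ast_P$ and $\ast_P$-positivity forces $\tau_n\in\mathbb{R}$ (a brief polarization argument) and gives both Hermitian symmetry and non-negativity of $(\cdot,\cdot)_{H_\tau}$; quotienting by its kernel and completing produces the Hilbert space $H_\tau$. The operator $J_P$ is well defined on (the image of) $l_{\rm fin}$; using $\overline{\delta_1\ast_P g}=\delta_1\ast_P\bar g$ together with associativity/commutativity one gets $(J_Pf,g)_{H_\tau}=(f,J_Pg)_{H_\tau}$, so $J_P$ is Hermitian. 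Being represented by a real matrix, $J_P$ commutes with the conjugate-linear involution $f\mapsto\bar f$, hence has equal deficiency indices and admits a self-adjoint extension $\widetilde{J_P}$ in $H_\tau$.

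To invoke Theorem~\ref{thm.spectral} equip $H_\tau$ with a rigging \eqref{eq.chain} via a weight $p=(p_n)_{n\ge 0}$, $p_n\ge 1$, chosen large enough that the embedding $l^{2}(p)\hookrightarrow H_\tau$ is quasinuclear while $J_P$ still sends $l_{\rm fin}$ continuously into $l^{2}(p)$; this is possible because, by \eqref{eq.multiplication}, $J_P\delta_n$ is a finite linear combination of the $\delta_k$. The vector $\delta_0$ is a strong cyclic vector: writing $P_n=Q_n\circ P_1$ for polynomials $Q_n$ of degree $n$ (available because $P_1$ has degree $1$) one has $\delta_n=Q_n(J_P)\delta_0$, so $\operatorname{span}\{J_P^k\delta_0\}_{k\ge 0}=l_{\rm fin}$. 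Theorem~\ref{thm.spectral} then produces a non-negative finite Borel measure $\mu$ on $\mathbb{R}$ and a generalized eigenvector $\xi(x)$ of $\widetilde{J_P}$; solving the eigenvalue recursion with the normalization $\xi_0\equiv 1$ identifies $\xi_n(x)=P_n(x)$ (after, if necessary, a change of spectral variable absorbing the affine factor in $P_1$). The Parseval identity \eqref{eq.pars.equality} with $f=\delta_n$, $g=\delta_0$ reads $\tau_n=(\delta_n,\delta_0)_{H_\tau}=\int_{\mathbb{R}}P_n(x)\,d\mu(x)$. The principal technical step is the choice of weight $p$ making $\widetilde{J_P}$ standardly connected while preserving the Hilbert-Schmidt embedding; the identification of $\xi(x)$ from the eigenvalue recursion and the final Parseval calculation are routine once that setup is in place.
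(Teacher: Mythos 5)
Your proposal is correct and follows essentially the same route as the paper: the trivial necessity computation, then Berezansky's scheme — build $H_\tau$ from $\tau(f\ast_P\bar g)$, note $J_P=I_P^{-1}\mathbb{J}I_P$ is Hermitian and real (hence has self-adjoint extensions), choose a weight $p$ making $l^2(p)\hookrightarrow H_\tau$ quasinuclear, verify $\delta_0$ is a strong cyclic vector, apply the projection spectral theorem, identify the generalized eigenvector with $(P_n(x))_{n\ge0}$ via the eigenvalue recursion, and read off $\tau_n=(\delta_n,\delta_0)_{H_\tau}=\int_{\mathbb R}P_n\,d\mu$ from Parseval. The only cosmetic differences are that the paper normalizes $P_0=1$, $P_1(x)=x$ where you allow an affine change of spectral variable, and that it carries out the eigenvector identification explicitly through the unitary $U:(l^2(p))'_{H_\tau}\to l^2(p^{-1})$ rather than by appeal to the recursion's uniqueness.
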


A method of proving this result is similar to considerations of
\cite{B02a} and is based on the theory of generalized eigenfunction
expansion. In the case of the classical moment problem this method
was first proposed by Yu.~M.~Berezansky in \cite{B65},~Ch.~8.

\begin{proof}
The necessity of condition (\ref{eq.positivity-c}) is trivial.
Indeed,
\begin{align*}
    \tau(f\ast_P\bar{f})
    &=
    \sum_{n=0}^{\infty}\tau_n(f\ast_P\bar{f})_n
    =\sum_{n=0}^{\infty}\tau_n\Big(\sum_{j,k=0}^{\infty}f_j\bar{f}_k(P_jP_k,P_n)_{\mathcal{P}}\Big)\\
    &=
    \int_{\mathbb{R}}\sum_{j,k=0}^{\infty}f_j\bar{f}_k
    \Big(\sum_{n=0}^{\infty}(P_jP_k,P_n)_{\mathcal{P}}P_n(x)\Big)\,d\mu(x)\\
    &=
    \int_{\mathbb{R}}\Big|\sum_{j=0}^{\infty}f_jP_j(x)\Big|^2\,d\mu(x)\geq
    0,\quad f=(f_n)_{n=0}^{\infty}\in\mathcal{A}.
\end{align*}

For the proof of the sufficiency of condition
(\ref{eq.positivity-c}), we will apply Theorem~\ref{thm.spectral} to
a certain self-adjoint operator connected with our moment problem.

Let $\tau\in\mathbb{C}^{\infty}$ be a positive functional on
$\mathcal{A}$, that is (\ref{eq.positivity-c}) holds. Using this
functional and convolution $\ast_P$ we construct in a standard way a
Hilbert space $H_{\tau}$. Namely, we define $H_{\tau}$ as a Hilbert
space associated with the quasiscalar product
\begin{equation}\label{eq.quasiproduct}
         (f,g)_{H_{\tau}}:=\tau(f\ast_P\bar{g}),\quad
         f,g\in\mathcal{A}.
\end{equation}
For the construction of $H_{\tau}$, at first it is necessary to pass
from $\mathcal{A}$ to the factor space $
   \dot{\mathcal{A}}:=\mathcal{A}/\{f\in \mathcal{A}\,|\,(f,f)_{H_{\tau}}=0\}
$ and then to take the completion of $\dot{\mathcal{A}}$. For
simplicity we will suppose that
$\dot{\mathcal{A}}\equiv\mathcal{A}$, i.e., $(f,f)_{H_{\tau}}=0$ if
and only if $f=0$. Note that an investigation of the general case is
possible but technically it is more complicated (for the
corresponding constructions in the case of the classical moment
problem, see \cite{B65}, Ch. 8, \S~1, Subsect. 4 or in \cite{BK88},
Ch. 5, \S~5, Subsect. 1--3).

For the sake of simplicity we will assume that $P_0(x)=1$ and
$P_1(x)=x$. Using (\ref{eq.convolution.g}) and
(\ref{eq.fourier.polynom}) we define an operator
\begin{equation}\label{eq.j-operator}
   J_P:l_{\rm fin}\to l_{\rm fin},
   \quad J_Pf:=I_{P}^{-1}\mathbb{J}I_{P}=\delta_1\ast_Pf,\quad f\in l_{\rm
   fin},
\end{equation}
where $\delta_1=(0,1,0,0,\ldots)$, $I_P$ is defined by formula
(\ref{eq.fourier.polynom}) and $\mathbb{J}$ is the operator of
multiplication by $x$ in the space $\mathbb{C}[x]$, i.e.,
\begin{equation*}
   (\mathbb{J}F)(x):=P_1(x)F(x)=xF(x),\quad F\in \mathbb{C}[x].
\end{equation*}
The operator $J:l_{\rm fin}\to l_{\rm fin}$ is Hermitian in the
Hilbert space $H_{\tau}$,
$$
   (J_Pf,g)_{H_{\tau}}=\tau(\delta_1\ast_Pf\ast_P\bar{g})=
   \tau(f\ast_P\overline{\delta_1\ast_Pg})=(f,J_Pg)_{H_{\tau}},\quad f,g\in l_{\rm
   fin},
$$
and, moreover, it is real with respect to involution
(\ref{eq.involution}), i.e., $\overline{J_Pf}=J_P\bar{f},\;f\in
l_{\rm fin}$. Therefore, by a theorem of von Neumann $J_P$ has
self-adjoint extensions.

Denote by $A$ a certain self-adjoint extension of $J_P$ on
$H_{\tau}$. We will apply Theorem~\ref{thm.spectral} to this
operator. Now the role of chain (\ref{eq.chain}) will play the
rigging
\begin{equation}\label{eq.chain.moment}
    (l^2(p))_{H_{\tau}}'\supset H_{\tau} \supset l^2(p) \supset
    l_{\rm fin},
\end{equation}
where $(l^2(p))_{H_{\tau}}'=\mathcal{H}_-$ is the negative space
with respect to the positive space $l^2(p)$ and the zero space
$H_{\tau}=\mathcal{H}$. The space $l_{\rm fin}=\mathcal{D}$ is
provided with uniformly finite coordinate-wise convergence, i.e.,
the sequence $\{f^{(j)},j\in\mathbb{N}\}\subset l_{\rm{fin}}$
converge to $f\in l_{\rm{fin}}$ if and only if there exists
$N\in\mathbb{N}$ such that $f_n^{(j)}=0$ for all
$n>N,\;j\in\mathbb{N}$ and $f_n^{(j)}\rightarrow f_n$ as $j\to
\infty$ for all $n\in\mathbb{N}_0$.

\begin{lem}
   There exists a weight $p=(p_n)_{n=0}^{\infty},\;p_n\geq 1$, such
   that the embedding $l^2(p)\hookrightarrow H_{\tau}$ is
   well-defined and quasinuclear.
\end{lem}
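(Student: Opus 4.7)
The plan is to exhibit a weight $p=(p_n)_{n=0}^{\infty}$ defined directly from the diagonal norms $\|\delta_n\|_{H_{\tau}}^{2}$, and then check by hand that both the bounded-embedding and the Hilbert--Schmidt conditions reduce to one and the same convergent series. The starting observation is that $\{\delta_n\}_{n\geq 0}$ is the natural basis of $l_{\rm fin}$ and that, for any positive weight, $\{p_n^{-1/2}\delta_n\}_{n\geq 0}$ is an orthonormal basis of $l^2(p)$. Using formula (\ref{eq.multiplication}) with $f=g=\delta_n$ (and $\bar{\delta}_n=\delta_n$) the quantities
\[
   c_n := \|\delta_n\|_{H_{\tau}}^{2}
        = \tau(\delta_n\ast_P\delta_n)
        = \sum_{m=0}^{2n}\tau_m (P_n^2,P_m)_{\mathcal{P}}
\]
are finite real numbers, since $P_n^2$ has degree $2n$ and so $(P_n^2,P_m)_{\mathcal{P}}=0$ for $m>2n$.

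Next I would choose any $p_n\geq 1$ such that $\sum_{n=0}^{\infty} p_n^{-1}c_n<\infty$, for instance $p_n:=\max\{1,\,2^n c_n\}$. For an arbitrary $f=\sum_n f_n\delta_n\in l_{\rm fin}$ the triangle inequality in $H_{\tau}$ and the Cauchy--Bunyakovsky--Schwarz inequality give
\[
   \|f\|_{H_{\tau}} \leq \sum_{n=0}^{\infty}|f_n|\sqrt{c_n}
   =\sum_{n=0}^{\infty}\bigl(|f_n|p_n^{1/2}\bigr)\bigl(p_n^{-1/2}\sqrt{c_n}\bigr)
   \leq\Bigl(\sum_{n=0}^{\infty}p_n^{-1}c_n\Bigr)^{1/2}\!\|f\|_{l^2(p)},
\]
so the identity map on $l_{\rm fin}$ extends by continuity to a bounded operator $\iota:l^2(p)\to H_{\tau}$; since $p_n\geq 1$, the inclusion $l^2(p)\hookrightarrow l^2\hookrightarrow H_{\tau}$ is well defined.

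For quasinuclearity I would compute the Hilbert--Schmidt norm of $\iota$ on the orthonormal basis $\{p_n^{-1/2}\delta_n\}$ of $l^2(p)$:
\[
   \sum_{n=0}^{\infty}\bigl\|\iota\bigl(p_n^{-1/2}\delta_n\bigr)\bigr\|_{H_{\tau}}^{2}
   =\sum_{n=0}^{\infty}p_n^{-1}c_n<\infty,
\]
which is precisely the condition used to define $p$. Density of $l_{\rm fin}$ in $l^2(p)$ is standard, and density of $l_{\rm fin}$ in $H_{\tau}$ holds by construction of the completion; together with the Hilbert--Schmidt bound this yields a quasinuclear embedding $l^2(p)\hookrightarrow H_{\tau}$.

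There is no real obstacle here: the only place where one must be slightly careful is confirming that each $c_n$ is finite, which follows immediately from the degree bound on $P_n^2$ as noted above. After that, the construction is pure bookkeeping -- one inflates $p_n$ fast enough to dominate $c_n$ geometrically, and both the continuity of the embedding and its Hilbert--Schmidt character collapse to the same absolutely convergent sum $\sum_n p_n^{-1}c_n$.
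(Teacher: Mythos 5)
Your proof is correct and rests on the same idea as the paper's: the only quantities that matter are the diagonal norms $c_n=\|\delta_n\|_{H_{\tau}}^{2}=\tau(\delta_n\ast_P\delta_n)$ (the paper's $K_{nn}$), and one inflates the weight so that $\sum_n p_n^{-1}c_n<\infty$. The packaging differs slightly: the paper first derives $\|f\|_{H_\tau}^2\le\bigl(\sum_j K_{jj}q_j^{-1}\bigr)\|f\|_{l^2(q)}^2$ from the positive definiteness of the matrix $K_{jk}$ together with $|K_{jk}|^2\le K_{jj}K_{kk}$, obtaining a continuous embedding $l^2(q)\hookrightarrow H_\tau$, and then composes it with a standard quasinuclear embedding $l^2(p)\hookrightarrow l^2(q)$ with $\sum_n q_np_n^{-1}<\infty$; you instead get the same bound from the triangle inequality for the seminorm $\|\cdot\|_{H_\tau}$ and verify the Hilbert--Schmidt property in one step by summing $\|\iota(p_n^{-1/2}\delta_n)\|_{H_\tau}^2=p_n^{-1}c_n$ over the orthonormal basis of $l^2(p)$. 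Both routes are valid; yours uses a single weight and avoids the off-diagonal estimate on $K$, while the paper's two-step factorization is the standard template in this literature. The one point worth making explicit in your write-up is that $c_n\ge 0$ (so that $\sqrt{c_n}$ makes sense), which is exactly where the $\ast_P$-positivity of $\tau$ enters.
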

\begin{proof}
   Let us set
   \begin{equation}\label{eq.k-kernel}
      K_{jk}:=\sum_{n=0}^{\infty}\tau_n(P_jP_k,P_n)_{\mathcal{P}},
      \quad j,k\in\mathbb{N}_0
   \end{equation}
   (note that for any $j,k\in\mathbb{N}_0$ the sum in (\ref{eq.k-kernel}) is
   finite). Due to (\ref{eq.positivity-c}) the matrix $K=(K_{jk})_{j,k=0}^{\infty}$ is
   nonnegative definite, i.e.,
   \begin{equation*}
      \sum_{j,k=0}^{\infty}K_{jk}f_j\bar{f}_k
      =\sum_{j,k=0}^{\infty}\Big(\sum_{n=0}^{\infty}\tau_n(P_jP_k,P_n)_{\mathcal{P}}\Big)f_j\bar{f}_k
      =\tau(f\ast_P\bar{f})\geq  0,\quad f\in l_{\rm fin}.
   \end{equation*}
   Hence,
   \begin{equation}\label{eq.kosh-bun}
      |K_{jk}|^2\leq K_{jj} K_{kk},\quad j,k\in\mathbb{N}_0.
   \end{equation}
   Let $q=(q_n)_{n=0}^{\infty},\;q_n\geq 1$, be such that
   $\sum_{n=0}^{\infty}K_{nn}q_{n}^{-1}<\infty$. Then from
   (\ref{eq.quasiproduct}), (\ref{eq.positivity-c}) and
   (\ref{eq.kosh-bun}) it follows that, for all $f\in l_{\rm fin}$,
   $$
       \|f\|_{H_{\tau}}^{2}=\tau(f\ast_P\bar{f})=\sum_{j,k=0}^{\infty}K_{jk}f_j\bar{f}_k
       \leq
       \Big(\sum_{j=0}^{\infty}\frac{K_{jj}}{q_j}\Big)\|f\|_{l_2(q)}^{2}.
   $$
   Therefore, $l^2(q)\hookrightarrow H_{\tau}$ topologically. But if
   $\sum_{n=0}^{\infty}q_{n}p_{n}^{-1}<\infty$, then $l^2(p)\hookrightarrow
   l^2(q)$ quasinuclearly. The composition of these two embeddings
   gives that $l^2(p)\hookrightarrow H_{\tau}$ is quasinuclear.
\end{proof}

In what follows we fix a weight $p=(p_n)_{n=0}^{\infty},\;p_n\geq
1$, such that the embedding $l^2(p)\hookrightarrow H_{\tau}$ is
quasinuclear. It is clear that the operator $A$ is standardly
connected with chain (\ref{eq.chain.moment}). Let us show that the
vector $\Omega=\delta_0=(1,0,0,\ldots)\in l_{\rm fin}$ is a strong
cyclic vector for $A$.

To this end, it suffices to show that $
   {\rm span}\{A^n\Omega\;|\;n\in\mathbb{N}_0\}=l_{\rm fin}.
$ But this is evidently true, since
$I_P:l_{\rm{fin}}\to\mathbb{C}[x]$ is bijection, ${\rm
span}\{x^n\;|\;n\in\mathbb{N}_0\}=\mathbb{C}[x]$ and by
(\ref{eq.j-operator})
$$
   A^n\Omega=J^n\delta_0=I_{P}^{-1}(x^n),\quad n\in\mathbb{N}_0.
$$

So, the operator $A$ satisfies all assumptions of
Theorem~\ref{thm.spectral}. Let $\mu$ be the corresponding spectral
measure of $A$ and $\xi(x)\in (l^2(p))_{H_{\tau}}'$ be the
generalized eigenvector of $A$ with an eigenvalue $x\in\mathbb{R}$.
According to Theorem~\ref{thm.spectral} we have
\begin{equation}\label{eq.eigenvector}
   \langle\xi(x),Af\rangle_{H_{\tau}}=x\langle\xi(x),f\rangle_{H_{\tau}},
   \quad f\in l_{\rm fin},
\end{equation}
and the mapping
\begin{equation}\label{eq.self.unitary.moment}
   H_{\tau}\supset l_{\rm fin}\ni f\mapsto(I_Af)(\cdot):=\langle
   f,\xi(\cdot)\rangle_{H_{\tau}}\in L^2(\mathbb{R},\mu)
\end{equation}
is isometric.

To prove (\ref{eq.moment-reprez}), it suffices to check that
\begin{equation}\label{eq.Ip-Ia}
    (I_Af)(x)=(I_Pf)(x)=\sum_{n=0}^{\infty}f_nP_n(x),\quad f\in l_{\rm
    fin},
\end{equation}
for $\mu$-almost all $x\in\mathbb{R}$.

Indeed, suppose that (\ref{eq.Ip-Ia}) takes place. Then by
(\ref{eq.pars.equality}) we have
\begin{equation}\label{eq.911}
    (f,g)_{H_{\tau}}=\int_{\mathbb{R}}(I_Pf)(x)\overline{(I_Pg)(x)}\,d\mu(x),
    \quad f,g\in l_{\rm fin}.
\end{equation}
Therefore, taking into account the equalities
$\tau_n=\tau(\delta_n)=\tau(\delta_n\ast_P\delta_0)=(\delta_n,\delta_0)_{H_{\tau}}$
and $(I_P\delta_n)(x)=P_n(x)$, we get
$$
   \tau_n=(\delta_n,\delta_0)_{H_{\tau}}=\int_{\mathbb{R}}P_n(x)\,d\mu(x),\quad n\in \mathbb{N}_0.
$$

Let us check (\ref{eq.Ip-Ia}). According to \cite{B02a}, Lemma~2.2,
there exists a unique determined unitary operator
$U:(l^2(p))_{H_{\tau}}'\to l^2(p^{-1})$ such that
$$
   \langle U\eta,g\rangle_{l^2}=\langle \eta,g\rangle_{H_{\tau}},
   \quad \eta\in (l^2(p))_{H_{\tau}}',\quad g\in l^{2}(p).
$$
Therefore, it suffices to show that
$$
   (U\xi)(x)=P(x):=(P_n(x))_{n=0}^{\infty},
   \quad x\in\mathbb{R},
$$
or, equivalently,
$$
   \langle P(x),Af\rangle_{l^2}=x\langle P(x),f\rangle_{l^2},
   \quad x\in\mathbb{R},\quad f\in l_{\rm fin}.
$$

But the latter equality takes place, since on the one hand
$$
   x\langle
   P(x),f\rangle_{l^2}=x\sum_{n=0}^{\infty}f_nP_n(x)=x\cdot(I_Pf)(x).
$$
On the other hand, taking into account that $P_1(x)=x$,
$Af=\delta_1\ast_Pf$, $f\in l_{\rm fin}$, and
$$
    (\delta_1\ast_Pf)_n=\sum_{k=0}^{\infty}f_k(P_1P_k,P_n)_{\mathcal{P}},
    \quad n\in\mathbb{N}_0,
$$
we get
\begin{align*}
    \langle P(x),Af\rangle_{l^2}
   &
    =\langle P(x),\delta_1\ast_Pf\rangle_{l^2}
    =\sum_{n=0}^{\infty}P_n(x)\Big(\sum_{k=0}^{\infty}f_k(P_1P_k,P_n)_{\mathcal{P}}\Big)\\
   &
    =\sum_{n=0}^{\infty}\Big(\sum_{k=0}^{\infty}f_k(P_1P_k,P_n)_{\mathcal{P}}P_n(x)\Big)
    =\sum_{n=0}^{\infty}(P_1I_Pf,P_n)_{\mathcal{P}}P_n(x)\\
   &
    =P_1(x)\cdot(I_Pf)(x)=x\cdot(I_Pf)(x).
\end{align*}

Thus, Theorem~\ref{t.moment-problem} is proved .
\end{proof}

\begin{rem}\label{r.class-moment}
   Let $P_n(x)=x^n,\, n\in\mathbb{N}_0$. Then $\ast_P=\ast$ is the Cauchy product (\ref{eq.convolution.classic})
   and the corresponding moment problem is called the {\it Hamburger moment
   problem}.

   From Theorem~\ref{t.moment-problem} and formula (\ref{eq.convolution.classic})
   we immediately get the following classical result: $\tau=(\tau_n)_{n=0}^{\infty}\in\mathbb{C}^{\infty}$ {\it is a
   moment functional on $(\mathcal{A},\ast)$ (moment sequences) if and only if}
   \begin{equation}\label{eq.202-1}
      \tau(f\ast\bar{f})=\sum_{j,k=0}^{\infty}\tau_{j+k}f_j\bar{f}_k\geq
      0,\quad f\in l_{\rm fin}.
   \end{equation}

   Note that now the operator $J_P:l_{\rm fin}\to l_{\rm fin},\; J_Pf:=\delta_1\ast
   f$,
   is an ordinary right shift (or, in another terminology, a creation operator), that is
   $$
       J_Pf=J(f_0,f_1,\ldots)=(0,f_0,f_1,\ldots),
       \quad f=(f_n)_{n=0}^{\infty}\in l_{\rm fin},
   $$
   or in a matrix form
\begin{equation*}
      J_P=
      \begin{pmatrix}
        0 & 0 & 0  & 0 & 0 & \ldots \\
        1 & 0 & 0  & 0 & 0 & \ldots \\
        0 & 1 & 0  & 0 & 0 & \ldots \\
        0 & 0 & 1  & 0 & 0 & \ldots \\
        \cdot & \cdot & \cdot & \cdot & \cdot & \ldots \
     \end{pmatrix}.
\end{equation*}
\end{rem}

\begin{rem}
   Let $P_n(x)=(x)_n=x(x-1)\cdots(x-n+1)$. Then $\ast_P=\star$ has form (\ref{eq.convolution.k-k})
   and as a direct consequence of Theorem~\ref{t.moment-problem} we
   get: $\tau=(\tau_n)_{n=0}^{\infty}\in\mathbb{C}^{\infty}$ {\it is a
   moment functional on $(\mathcal{A},\star)$ if and only if}
   \begin{equation}\label{eq.202-1-newton}
      \tau(f\star\bar{f})=\sum_{i,j,k=0}^{\infty}\frac{(i+j)!(i+k)!}{i!k!j!}\tau_{i+j+k}f_{i+j}\bar{f}_{j+k}\geq
      0,\quad f\in l_{\rm fin}.
   \end{equation}

It easy to see that the Newton polynomials $(x)_n$ obey the
recurrence relation 
\begin{equation}\label{eq.multiplication-on-x}
 x(x)_n=(x)_{n+1}+n(x)_n.
\end{equation}

Indeed, let $a_-:l_{\rm fin}\to l_{\rm fin}$ be an annihilation
operator,  i.e.,
  $$
     a_-((f_n)_{n=0}^{\infty})=(f_1,2f_2,\ldots,nf_n,\ldots).
  $$
  Then on the one hand, the operator
  $\partial:=I_Pa_-I_P^{-1}:\mathbb{C}[x]\to\mathbb{C}[x]$
  acts by the formula
  $$
      \partial(x)_{n}=n(x)_{n-1},\quad n\in\mathbb{N}_0.
  $$
  On the other hand, it can be proved that, for any polynomial $F\in\mathbb{C}[x]$,
  $$
     (\partial F)(x)=F(x+1)-F(x)\quad\text{and, therefore,}\quad \partial(x)_{n}=(x+1)_n-(x)_n.
  $$
  Thus,
  $
      n(x)_{n-1}=(x+1)_n-(x)_n
  $
  and therefore (\ref{eq.multiplication-on-x}) holds.

It follows from (\ref{eq.multiplication-on-x}) that the operator
$J_P:l_{\rm fin}\to l_{\rm fin},\; J_Pf:=\delta_1\star f$, has the
following matrix representation
\begin{equation*}
      J_P=
      \begin{pmatrix}
        0 & 0 & 0  & 0 & 0 & \ldots \\
        1 & 1 & 0  & 0 & 0 & \ldots \\
        0 & 1 & 2  & 0 & 0 & \ldots \\
        0 & 0 & 1  & 3 & 0 & \ldots \\
        \cdot & \cdot & \cdot & \cdot & \cdot & \ldots \
     \end{pmatrix}.
\end{equation*}
\end{rem}





\section{Sheffer polynomials and analytic measures}\label{s.sheffer.polynom}

The {\it Sheffer polynomials} $(P_n)_{n=0}^{\infty}$ are defined via
their exponential generating function
\begin{equation}\label{eq.Schefer}
   P(x,\lambda):=\gamma(\lambda)e^{\alpha(\lambda)x}
   =\sum_{n=0}^{\infty}\frac{\lambda^n}{n!}P_n(x),
  \quad x\in{\mathbb R},\quad\lambda\in \mathcal{U},
\end{equation}
where $\mathcal{U}$ is a some neighborhood of zero in $\mathbb{C}$,
$\gamma$ and $\alpha$ are analytic functions in $\mathcal{U}$ such
that $\alpha(0)=0,\,\alpha'(0)\neq0$ and $\gamma(0)=1$. Using the
classical Faa di Bruno formula it can be showed that each $P_n(x)$
is a polynomial of exact degree $n\in\mathbb{N}_0$.

We observe that many classical polynomial families are Sheffer ---
the monomials, Newton, Bernoulli, Hermite, Poisson-Charlier
polynomials and many others. Note also that the Sheffer polynomials
have remarkable applications in various fields, such as probability,
numerical analysis, Rota's umbral calculus and so on. We refer,
e.g., to \cite{Sheffer, Rot75, Roman} and \cite{Anshelevich} for
more details.

For every $x\in \mathbb{R}$ the function $P(x,\cdot)$ is an analytic
in a neighborhood of $0\in\mathbb{C}$. Therefore,
$$
   P_n(x)=\frac{d^n}{d\lambda^n}P(x,\lambda)\Big|_{\lambda=0}
   =\frac{n!}{2\pi
   i}\oint_{|\zeta|=r}\frac{P(x,\zeta)}{\zeta^{n+1}}d\zeta,
$$
where $r>0, r\in \mathcal{U}$. As a consequence, for all
$\varepsilon>0$ there exists $r_{\varepsilon}>0$ such that
\begin{equation}\label{eq.estimate.polinom}
    |P_n(x)|\leq
    \frac{n!}{r_{\varepsilon}^n}\sup_{|\lambda|=r_{\varepsilon}}|\gamma(\lambda)e^{\alpha(\lambda)x}|
    \leq \frac{2n!}{r_{\varepsilon}^n}e^{\varepsilon|x|}, \quad x\in\mathbb{R},\quad
    n\in\mathbb{N}_0,
\end{equation}
where $r_{\varepsilon}\in \mathcal{U}$ is chosen in such a way that
$|\alpha(\lambda)|\leq\varepsilon$ and $\gamma(\lambda)\leq 2$ for
$|\lambda|=r_{\varepsilon}$.

Let $\mu$ be a non-negative finite Borel measure on $\mathbb{R}$
such that a Laplace transform
\begin{equation*}
              l_{\mu}(\lambda):=\int_{\mathbb{R}}e^{x\lambda}\,d\mu(x)
\end{equation*}
is well-defined in a neighborhood of zero in $\mathbb{C}$. It is
easy to check the following result.

\begin{prop}\label{r.analit-measure}
  Let a measure $\mu$ on $\mathcal{B}(\mathbb{R})$  be such that  $e^{x\lambda}$
  belongs to $L^1(\mathbb{R},\mu)$ for $|\lambda|<\varepsilon$ (for some $\varepsilon>0$).
  Then the Laplace transform $l_{\mu}$ of $\mu$ admits the representation
  \begin{equation}\label{eq.analytic-laplace}
              l_{\mu}(\lambda):=\int_{\mathbb{R}}e^{x\lambda}\,d\mu(x)=
              \sum_{n=0}^{\infty}\frac{\lambda^n}{n!}\int_{\mathbb{R}}x^n\,d\mu(x),
              \quad |\lambda|<\varepsilon,
  \end{equation}
  and, as a consequences, $l_{\mu}$ is analytic in a neighborhood of
  zero in $\mathbb{C}$, i.e., $l_{\mu}\in \rm{Hol}_0(\mathbb{C})$.
\end{prop}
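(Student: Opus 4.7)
The plan is to derive the power series expansion by justifying term-by-term integration of the Taylor series of $e^{x\lambda}$, and then to read off analyticity directly from the resulting convergent series.

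First, I would extract from the hypothesis a usable domination. Since $|e^{x\lambda}|=e^{x\operatorname{Re}\lambda}$, integrability of $e^{x\lambda}$ on the complex disk $|\lambda|<\varepsilon$ forces $e^{cx}\in L^1(\mathbb{R},\mu)$ for every real $c$ with $|c|<\varepsilon$. Fixing any $0<\varepsilon_0<\varepsilon$, the elementary bound
\begin{equation*}
  e^{\varepsilon_0|x|}\le e^{\varepsilon_0 x}+e^{-\varepsilon_0 x}
\end{equation*}
then gives $e^{\varepsilon_0|x|}\in L^1(\mathbb{R},\mu)$. In particular all moments $\int_{\mathbb{R}}|x|^n\,d\mu(x)$ are finite, because $|x|^n\le n!\,\varepsilon_0^{-n}e^{\varepsilon_0|x|}$.

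Next, for any $\lambda\in\mathbb{C}$ with $|\lambda|<\varepsilon_0$, I would apply Fubini--Tonelli (or equivalently, the monotone convergence theorem on the partial sums of nonnegative terms) to the double series/integral
\begin{equation*}
  \sum_{n=0}^{\infty}\int_{\mathbb{R}}\frac{|\lambda|^n|x|^n}{n!}\,d\mu(x)
  =\int_{\mathbb{R}}e^{|\lambda||x|}\,d\mu(x)<\infty.
\end{equation*}
This absolute convergence legitimises the interchange
\begin{equation*}
  l_{\mu}(\lambda)=\int_{\mathbb{R}}\sum_{n=0}^{\infty}\frac{(x\lambda)^n}{n!}\,d\mu(x)
  =\sum_{n=0}^{\infty}\frac{\lambda^n}{n!}\int_{\mathbb{R}}x^n\,d\mu(x),
\end{equation*}
which is exactly (\ref{eq.analytic-laplace}). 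Since $\varepsilon_0<\varepsilon$ was arbitrary, the identity holds on the whole disk $|\lambda|<\varepsilon$.

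Finally, the right-hand side is a convergent power series on a disk around $0$, so $l_{\mu}\in\mathrm{Hol}_0(\mathbb{C})$ by definition. There is no genuine obstacle here: the only point that requires a little care is the passage from the hypothesis ``$e^{x\lambda}\in L^1$ for complex $|\lambda|<\varepsilon$'' to the real-variable domination $e^{\varepsilon_0|x|}\in L^1$ needed to apply Fubini, and this is handled by the two-sided estimate noted above.
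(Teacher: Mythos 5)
Your proof is correct and follows essentially the same route as the paper: both justify term-by-term integration of the series $\sum_n\frac{(x\lambda)^n}{n!}$ by dominating it with an integrable function of the form $e^{\varepsilon_0|x|}$ (the paper uses $2\cosh(x|\lambda|)$ and reaches the odd moments via Cauchy--Schwarz, whereas your Tonelli computation $\sum_n\int\frac{|\lambda|^n|x|^n}{n!}\,d\mu=\int e^{|\lambda||x|}\,d\mu<\infty$ handles all moments at once). This is only a cosmetic difference; the argument is sound.
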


\begin{proof}
  Let us fix $\lambda\in\mathbb{C}$ such that $|\lambda|<\varepsilon$.  Since $e^{x\lambda}\in L^1(\mathbb{R},\mu)$
  then $\cosh(x|\lambda|)\in L^1(\mathbb{R},\mu)$ and by the
  monotone convergence theorem we have
$$
   \int_{\mathbb{R}}\cosh(x|\lambda|)\,d\mu(x)=
   \sum_{n=0}^{\infty}\frac{|\lambda|^{2n}}{(2n)!}\int_{\mathbb{R}}x^{2n}\,d\mu(x)<\infty.
$$
Therefore, $x^{2n}\in L^1(\mathbb{R},\mu)$. Using the Shwarz
inequality we get
$$
   \int_{\mathbb{R}}|x|^n\,d\mu(x)
   \leq\sqrt{\mu(\mathbb{R})}\Big(\int_{\mathbb{R}}|x|^{2n}\,d\mu(x)\Big)^{\frac{1}{2}}<\infty,
$$
i.e., $x^{n}\in L^1(\mathbb{R},\mu)$ for all $n\in\mathbb{N}$. Since
$|\sum_{n=0}^{N}\frac{\lambda^n}{n!}x^n|\leq 2\cosh(x|\lambda|)$, by
the dominated convergence theorem we obtain
$$
     l_{\mu}(\lambda)=\int_{\mathbb{R}}e^{x\lambda}\,d\mu(x)=
     \sum_{n=0}^{\infty}\frac{\lambda^n}{n!}\int_{\mathbb{R}}x^n\,d\mu(x)<\infty,
     \quad |\lambda|<\varepsilon.
$$
\end{proof}

Denote by $\mathcal{M}_a(\mathbb{R})$ the set of all non-negative
finite analytic measures $\mu$ on $\mathcal{B}(\mathbb{R})$, i.e.,
$$
   \mathcal{M}_a(\mathbb{R}):=
   \big\{\mu:\mathcal{B}(\mathbb{R})\to[0,\infty)\,\big|\,\mu\,\text{-- measure},\,l_{\mu}\in
   \rm{Hol}_0(\mathbb{C})\big\}.
$$
Equivalent descriptions of analytic measures are given by the
following lemma (see \cite{KSW95} for the infinite dimensional
analogue of this result).

\begin{thm}\label{t.measure.analitic}
 The following statement are equivalent
  \begin{enumerate}
    \item $\mu\in\mathcal{M}_a(\mathbb{R})$.
    \item There exists a constant $C>0$ such that
          $$
              \Big|\int_{\mathbb{R}}x^n\,d\mu(x)\Big|<n!C^{n+1},\quad
              n\in\mathbb{N}_0.
          $$
    \item There exists a constant $r>0$ such that $e^{|x|r}\in
    L^1(\mathbb{R},\mu)$.
    \item There exists a constant $\varepsilon>0$ such that
    $P(x,\lambda)=\gamma(\lambda)e^{\alpha(\lambda)x}\in
    L^1(\mathbb{R},\mu)$ for $|\lambda|<\varepsilon$, where $P(x,\lambda)$
    is a generating function of the Sheffer polynomials $P_n(x)$.
  \end{enumerate}
\end{thm}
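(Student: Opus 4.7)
The plan is to establish the four equivalences via a cyclic chain $(1)\Rightarrow(2)\Rightarrow(3)\Rightarrow(4)\Rightarrow(1)$, each link relying on tools already developed: Proposition \ref{r.analit-measure} (integrability of $e^{x\lambda}$ implies a convergent Taylor series for $l_\mu$), Theorem \ref{t.s-transform} together with Corollary \ref{cor.hol} (the $S$-transform characterization of $l_-^2$), and the structural hypotheses $\alpha(0)=0$, $\alpha'(0)\neq 0$, $\gamma(0)=1$ that govern the Sheffer generating function.

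For $(1)\Rightarrow(2)$, once $\mu\in\mathcal{M}_a(\mathbb{R})$ we know $e^{x\lambda}\in L^1(\mathbb{R},\mu)$ in a neighborhood of $0$, so Proposition \ref{r.analit-measure} identifies $l_\mu$ as the $S$-transform of the moment sequence $\tau_n=\int_{\mathbb{R}}x^n\,d\mu(x)$. Analyticity of $l_\mu=S\tau$ places $\tau$ in $l_-^2$ by Theorem \ref{t.s-transform}, and Corollary \ref{cor.hol} then yields the required bound $|\tau_n|\leq n!C^{n+1}$.

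For $(2)\Rightarrow(3)$, the key observation is that for even indices the moments are automatically non-negative, so (2) specializes to $\int_\mathbb{R} x^{2m}\,d\mu(x)\leq (2m)!C^{2m+1}$. Expanding $\cosh(r|x|)=\cosh(rx)$ in its Taylor series and integrating term-by-term via monotone convergence, I obtain an estimate bounded by $C\sum_{m\geq 0}(rC)^{2m}$, convergent for $r<1/C$. Since $e^{r|x|}\leq 2\cosh(rx)$, condition (3) follows. For $(3)\Rightarrow(4)$, continuity of the analytic functions $\alpha$ and $\gamma$ near the origin yields an $\varepsilon>0$ such that $|\alpha(\lambda)|\leq r$ and $|\gamma(\lambda)|\leq 2$ for $|\lambda|<\varepsilon$, whence $|P(x,\lambda)|\leq 2e^{r|x|}\in L^1(\mathbb{R},\mu)$.

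Finally, for $(4)\Rightarrow(1)$, since $\gamma(0)=1$ we have $\gamma(\lambda)\neq 0$ in a small disk, so dividing gives $e^{\alpha(\lambda)x}\in L^1(\mathbb{R},\mu)$ for small $|\lambda|$. The hypothesis $\alpha'(0)\neq 0$ combined with $\alpha(0)=0$ makes $\alpha$ a local biholomorphism at the origin, so the image $\{\alpha(\lambda):|\lambda|<\varepsilon\}$ contains some disk around $0\in\mathbb{C}$. Hence $e^{\eta x}\in L^1(\mathbb{R},\mu)$ for $\eta$ in a neighborhood of zero, and Proposition \ref{r.analit-measure} promotes this to $l_\mu\in\mathrm{Hol}_0(\mathbb{C})$. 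The main subtlety I anticipate is in $(2)\Rightarrow(3)$: the hypothesis controls $\left|\int x^n\,d\mu\right|$ rather than $\int|x|^n\,d\mu$, and only the positivity of $\mu$ together with parity of $2m$ converts the given estimate into a usable bound on all moments of $|x|$.
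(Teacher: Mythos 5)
Your proof is correct and follows the same cyclic chain $(1)\Rightarrow(2)\Rightarrow(3)\Rightarrow(4)\Rightarrow(1)$ as the paper, with each link argued by essentially the same means (Cauchy-type estimates for the Taylor coefficients of $l_\mu$, term-by-term integration of the exponential series, boundedness of $\gamma$ and smallness of $\alpha$ near $0$, and the local surjectivity of $\alpha$ from $\alpha(0)=0$, $\alpha'(0)\neq 0$). The only notable variation is in $(2)\Rightarrow(3)$, where you exploit the parity of the even moments and the bound $e^{r|x|}\leq 2\cosh(rx)$ in place of the paper's Cauchy--Bunyakovsky--Schwarz estimate $\int|x|^n\,d\mu\leq\sqrt{\mu(\mathbb{R})}\bigl(\int x^{2n}\,d\mu\bigr)^{1/2}$ for the odd absolute moments; both are valid and of comparable length.
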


\begin{proof}
 Let us check the following chain $1)\Rightarrow 2) \Rightarrow 3) \Rightarrow 4)\Rightarrow 1)$.

 $1)\Rightarrow 2)$. This fact immediately follows from representation (\ref{eq.analytic-laplace}).

 $2)\Rightarrow 3)$.
 For the moments of even order we have
 $$
     \int_{\mathbb{R}}|x|^{2n}\,d\mu(x)=\int_{\mathbb{R}}x^{2n}\,d\mu(x)\leq
     (2n)!C^{2n+1},\quad n\in\mathbb{N}_0.
 $$
 The moments of arbitrary order can be estimated by the Cauchy–-Bunyakovsky–-Schwarz
 inequality
 \begin{align*}
   \int_{\mathbb{R}}|x|^n\,d\mu(x)&\leq\sqrt{\mu(\mathbb{R})}\Big(\int_{\mathbb{R}}|x|^{2n}\,d\mu(x)\Big)^{\frac{1}{2}}
   \leq\sqrt{\mu(\mathbb{R})C}
   C^n\sqrt{(2n)!}\leq\sqrt{\mu(\mathbb{R})C}(2C)^nn!,
 \end{align*}
 since $(2n)!\leq 4^n(n!)^2$. Chose $r<(2C)^{-1}$ then
 $$
    \int_{\mathbb{R}}e^{|x|r}\,d\mu(x)
    =\sum_{n=0}^{\infty}\frac{r^n}{n!}\int_{\mathbb{R}}|x|^n\,d\mu(x)
    =\sqrt{\mu(\mathbb{R})C}\sum_{n=0}^{\infty}(r2C)^n<\infty.
 $$

 $3)\Rightarrow 4)$. Let $r>0$ be such as in statement (3) and $\varepsilon>0$ be
 chosen in such way that
 $\varepsilon\in B_0$ and $|\alpha(\lambda)|\leq r$ for $|\lambda|<\varepsilon$.
 Then for all $x\in\mathbb{R}$ and all $\lambda\in \mathbb{C}$ such that $ |\lambda|<\varepsilon$ we have
 $$
     |P(x,\lambda)|=|\gamma(\lambda)e^{\alpha(\lambda)x}|\leq
     Ce^{r|x|},\quad
     C:=\sup_{|\lambda|\leq\varepsilon}|\gamma(\lambda)|.
 $$
 So, $P(x,\lambda)\in  L^1(\mathbb{R},\mu)$ for $|\lambda|<\varepsilon$.

 $4)\Rightarrow 1)$.
 Since $\alpha$ is the analytic function in $\mathcal{U}$, $\alpha(0)=0$ and $\alpha'(0)\neq0$
 then there exists $\tilde{\varepsilon}>0$ such that
 $\{\lambda\in\mathbb{C}\,|\,|\lambda|<\tilde{\varepsilon}\}\subset {\rm Ran}\,(\alpha)$.
 Therefore, $e^{x\lambda}\in L^1(\mathbb{R},\mu)$ for
 $|\lambda|<\tilde{\varepsilon}$, i.e.,
 $\mu\in\mathcal{M}_a(\mathbb{R})$.
\end{proof}

\begin{cor}\label{l.l1-l2}
  Let $\mu\in\mathcal{M}_a(\mathbb{R})$ and $P(x,\lambda):=\gamma(\lambda)e^{\alpha(\lambda)x}$ be
  a generating function of the Sheffer polynomials $P_n(x)$. Then the following formula holds
  $$
    \int_{\mathbb{R}}P(x,\lambda)\,d\mu(x)=\sum_{n=0}^{\infty}\frac{\lambda^n}{n!}\int_{\mathbb{R}}P_n(x)\,d\mu(x)<\infty,
    \quad |\lambda|<\varepsilon,
  $$
  for some $\varepsilon>0$.
\end{cor}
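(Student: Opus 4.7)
The plan is to apply Lebesgue's dominated convergence theorem to the partial sums of the generating-function expansion~(\ref{eq.Schefer}), using the uniform estimate~(\ref{eq.estimate.polinom}) together with the exponential integrability furnished by Theorem~\ref{t.measure.analitic}. The argument is the Sheffer analogue of the proof of Proposition~\ref{r.analit-measure}: there $\cosh(x|\lambda|)$ served as the $\lambda$-independent dominant; here that role will be played by $e^{r|x|}$ times a geometric factor, with $r$ chosen to make it $\mu$-integrable.

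First I would use the implication $(1)\Rightarrow(3)$ of Theorem~\ref{t.measure.analitic} to select $r>0$ with $e^{r|x|}\in L^{1}(\mathbb{R},\mu)$. Substituting $\varepsilon=r$ into~(\ref{eq.estimate.polinom}) produces an $r_{r}>0$ such that
$$
 |P_{n}(x)|\le \frac{2\,n!}{r_{r}^{n}}\,e^{r|x|},\qquad x\in\mathbb{R},\ n\in\mathbb{N}_{0}.
$$
In particular each $P_{n}$ lies in $L^{1}(\mathbb{R},\mu)$, and for $|\lambda|<r_{r}$ the partial sums $S_{N}(x,\lambda):=\sum_{n=0}^{N}\frac{\lambda^{n}}{n!}P_{n}(x)$ are dominated, uniformly in $N$, by the $\mu$-integrable function $\frac{2}{1-|\lambda|/r_{r}}\,e^{r|x|}$. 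The same geometric bound simultaneously shows that $\sum_{n}\frac{|\lambda|^{n}}{n!}\int_{\mathbb{R}}|P_{n}(x)|\,d\mu(x)<\infty$, so the right-hand side series in the statement converges absolutely.

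To conclude, choose $\varepsilon>0$ smaller than both $r_{r}$ and the radius of the neighbourhood $\mathcal{U}$ on which~(\ref{eq.Schefer}) holds. Then for $|\lambda|<\varepsilon$ the partial sums $S_{N}(\cdot,\lambda)$ converge pointwise on $\mathbb{R}$ to $P(\cdot,\lambda)$, and dominated convergence delivers the asserted identity
$$
 \int_{\mathbb{R}}P(x,\lambda)\,d\mu(x)=\sum_{n=0}^{\infty}\frac{\lambda^{n}}{n!}\int_{\mathbb{R}}P_{n}(x)\,d\mu(x),
$$
with finiteness inherited from the dominant. There is no substantive obstacle; the only bookkeeping point is to pick a single $\varepsilon$ that lies inside both $\mathcal{U}$ and the disc $\{|\lambda|<r_{r}\}$ on which the dominant is summable, which is achieved by taking $\varepsilon$ less than the minimum of the two radii.
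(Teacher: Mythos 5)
Your proof is correct, but it follows a different route from the paper's. The paper's argument is an $L^2$ trick: it observes that $P^2(x,\lambda)=\gamma^2(\lambda)e^{2\alpha(\lambda)x}$ is again a Sheffer generating function, invokes statement (4) of Theorem~\ref{t.measure.analitic} to conclude $P(\cdot,\lambda)\in L^2(\mathbb{R},\mu)$ for small $|\lambda|$, and then writes $\int P(x,\lambda)\,d\mu=(P(\cdot,\lambda),1)_{L^2(\mathbb{R},\mu)}$ and interchanges the sum with the inner product by ``continuity of the inner product'' --- which tacitly requires that the partial sums of (\ref{eq.Schefer}) converge to $P(\cdot,\lambda)$ in the $L^2(\mathbb{R},\mu)$-norm, a point the paper leaves implicit (it is supplied only afterwards by the estimate (\ref{eq.esimate.polynom}) in Remark~\ref{r.lapl.analytic}). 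You instead work directly in $L^1$: combining implication $(1)\Rightarrow(3)$ of Theorem~\ref{t.measure.analitic} with the Cauchy-type bound (\ref{eq.estimate.polinom}) gives the $N$-independent dominant $\tfrac{2}{1-|\lambda|/r_r}\,e^{r|x|}\in L^1(\mathbb{R},\mu)$, after which dominated convergence and the geometric series yield both the identity and the absolute convergence of the right-hand side. Your version is more elementary and self-contained (it uses only results established before the corollary), at the cost of a little bookkeeping with the two radii; the paper's version is shorter and, once the $L^2$-convergence of the series is granted, also delivers $P(\cdot,\lambda)\in L^2(\mathbb{R},\mu)$, which is the form in which the fact is reused later (e.g.\ in Remark~\ref{r.lapl.analytic}). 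Both are valid.
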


\begin{proof}
Clearly, $P^2(x,\lambda)=\gamma^2(\lambda)e^{2\alpha(\lambda)x}$ is
a generating function of the Sheffer polynomials. Therefore, by
Theorem~\ref{t.measure.analitic} we have $P^2(\cdot,\lambda)\in
L^1(\mathbb{R},\mu)$ for $|\lambda|<\varepsilon$, i.e.,
$P(\cdot,\lambda)\in L^2(\mathbb{R},\mu)$ for
 $|\lambda|<\varepsilon$.
Using the latter, (\ref{eq.Schefer}) and the continuity property of
the inner product, we get
\begin{align*}
    \int_{\mathbb{R}}P(x,\lambda)\,d\mu(x)
    &=
    (P(\cdot,\lambda),1)_{L^2(\mathbb{R},\mu)}
    =\sum_{n=0}^{\infty}\frac{\lambda^n}{n!}(P_n(\cdot),1)_{L^2(\mathbb{R},\mu)}\\
    &=\sum_{n=0}^{\infty}\frac{\lambda^n}{n!}\int_{\mathbb{R}}P_n(x)\,d\mu(x)<\infty,
    \quad |\lambda|<\varepsilon.
\end{align*}
\end{proof}

\begin{rem}\label{r.lapl.analytic}
   $\mu\in\mathcal{M}_a(\mathbb{R})$ if and only if there exists a constant $C>0$ such that
  \begin{equation}\label{eq.esimate.polynom}
    \int_{\mathbb{R}}|P_n(x)|^2\,d\mu(x)\leq (n!)^2C^{n+1},
    \quad n\in\mathbb{N}_0,
  \end{equation}
  where $(P_n)_{n=0}^{\infty}$ is a family of Sheffer polynomials on
  $\mathbb{R}$.
\end{rem}

\begin{proof}

Let $\mu\in\mathcal{M}_a(\mathbb{R})$. Using
(\ref{eq.estimate.polinom}) for $2\varepsilon<r$ ($r>0$ from
Theorem~\ref{t.measure.analitic}) and
Theorem~\ref{t.measure.analitic} we get
  \begin{equation*}
    \int_{\mathbb{R}}|P_n(x)|^2\,d\mu(x)\leq\frac{4(n!)^2}{r_{\varepsilon}^{2n}}\int_{\mathbb{R}}e^{2\varepsilon|x|}d\mu<\infty,
    \quad n\in\mathbb{N}_0.
  \end{equation*}
Hence, (\ref{eq.esimate.polynom}) takes place.

Conversely, let (\ref{eq.esimate.polynom}) holds. Then
$$
   \Big\|\sum_{n=M}^{N}\frac{|\lambda|^n}{n!}P_n(\cdot)\Big\|_{L^2(\mathbb{R},\mu)}\leq
   \sum_{n=M}^{N}\frac{|\lambda|^n}{n!}\|P_n(\cdot)\|_{L^2(\mathbb{R},\mu)}\leq
   \sqrt{C}\sum_{n=M}^{N}(\lambda \sqrt{C})^n.
$$
So, $P(\cdot,\lambda)\in L^2(\mathbb{R},\mu)$ for $|\lambda|<
(\sqrt{C})^{-1}$ and, therefore, from
Theorem~\ref{t.measure.analitic} follows that
$\mu\in\mathcal{M}_a(\mathbb{R})$.
\end{proof}


\section{Analytic moment functionals}\label{s.analitik-moment-problem}

\subsection{Definition and properties}

As above let $P(x,\lambda):=\gamma(\lambda)e^{\alpha(\lambda)x}$ be
a generating function of the  Sheffer polynomials $P_n(x)$ and
$\mathcal{A}=l_{\rm fin}$ be an algebra with the product $\ast_P$
(\ref{eq.convolution.g}). In the sequel, we wil fix such family
$(P_n(x))_{n=0}^{\infty}$ of Sheffer polynomials and we assume, in
addition, that $P_n(x),\,n\in\mathbb{N}_0,$ are real-valued
polynomials.

\begin{defn}
 A functional $\tau\in \mathbb{C}^{\infty}$ is
 said to be an {\it analytic moment functional on} ($\mathcal{A},\ast_P$) if there exists an
 analytic measure $\mu\in\mathcal{M}_a(\mathbb{R})$ such that
\begin{equation}\label{eq.moment-reprez-new}
   \tau_n=\int_{\mathbb{R}}P_n(x)\,d\mu(x),\quad n\in \mathbb{N}_0.
\end{equation}
\end{defn}

Clearly, $\tau\in \mathbb{C}^{\infty}$ is an analytic moment
functional on $\mathcal{A}$ if and only if $\tau$ is a moment
functional on $\mathcal{A}$ and the measure $\mu$ in representation
(\ref{eq.moment-reprez-new}) belongs to $\mathcal{M}_a(\mathbb{R})$.

\begin{rem}\label{rem.m-g}
 Let $\tau=(\tau_n)_{n=0}^{\infty}\in \mathbb{C}^{\infty}$
 be an analytic moment functional on $\mathcal{A}$ and $\mu$
 be the corresponding measure on $\mathcal{B}(\mathbb{R})$ such that (\ref{eq.moment-reprez-new}) holds. Then
 Corollary~\ref{l.l1-l2} shows that the moments $\tau_n$ are the Taylor coefficients
 of the function generalized Laplace transform
 $$
    l(\cdot):=\int_{\mathbb{R}}P(x,\cdot)\,d\mu(x)\in \rm{Hol}_0(\mathbb{C}).
 $$
 That is, $l$ is the generating function for the moments $\tau_n$  and
 $$
    \tau_n=l^{(n)}(0)=\frac{d^n}{d\lambda^n}l(\lambda)\Big|_{\lambda=0},
    \quad n\in\mathbb{N}_0.
 $$
By using the $S$-transform (see Theorem~\ref{t.s-transform}), this
means that $\tau=S^{-1}l. $
\end{rem}

The aim of this section is to find conditions on $\tau\in
\mathbb{C}^{\infty}$ that would guarantee existence of a measure
$\mu$ on $\mathcal{B}(\mathbb{R})$ such that
$\mu\in\mathcal{M}_a(\mathbb{R})$ and (\ref{eq.moment-reprez-new})
takes place.

\begin{thm}\label{t.moment.sheffer.general}
  Necessary conditions that $\tau=(\tau_n)_{n=0}^{\infty}\in \mathbb{C}^{\infty}$ is an analytic moment functional on
  $\mathcal{A}$ are the following: $\tau$ is $\ast_P$-positive on $\mathcal{A}$
  (i.e., (\ref{eq.positivity-c}) holds)  and $\tau\in  l_{-}^2$.

  Sufficient conditions that $\tau=(\tau_n)_{n=0}^{\infty}\in \mathbb{C}^{\infty}$ is an analytic moment functional on
  $\mathcal{A}$ are the following:  $\tau$ is $\ast_P$-positive on $\mathcal{A}$
  and there is a constant $C>0$ such that
   \begin{equation}\label{eq.rtfgj}
    \tau(\delta_n\ast_P\delta_n)=\sum_{k=0}^{2n}\tau_k(\delta_n\ast_P\delta_n)_k
    \leq (n!)^2C^{n+1},\quad   n\in\mathbb{N}_0,
   \end{equation}
 where the vector $\delta_n\in l_{\rm fin}$ is defined by
 (\ref{eq.delta_n}).

\end{thm}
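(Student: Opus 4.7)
The plan is to reduce both directions to results already proved in the paper, specifically Theorem~\ref{t.moment-problem} (for the existence of a representing measure), Corollary~\ref{cor.hol} (the Taylor-coefficient description of $l_-^2$), Theorem~\ref{t.measure.analitic} (equivalent descriptions of $\mathcal{M}_a(\mathbb{R})$), and Remark~\ref{r.lapl.analytic} (the $L^2$-norm characterization of analytic measures via the Sheffer polynomials).

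For the necessity, $\ast_P$-positivity is automatic from the necessity direction of Theorem~\ref{t.moment-problem}. It remains to show $\tau\in l_{-}^2$. I would invoke the polynomial estimate (\ref{eq.estimate.polinom}): for every $\varepsilon>0$ there is $r_\varepsilon>0$ with $|P_n(x)|\leq (2n!/r_\varepsilon^n)e^{\varepsilon|x|}$. Since $\mu\in\mathcal{M}_a(\mathbb{R})$, Theorem~\ref{t.measure.analitic}(3) supplies $r>0$ with $e^{r|x|}\in L^1(\mathbb{R},\mu)$; pick $\varepsilon<r$ and integrate the pointwise bound, obtaining
\begin{equation*}
 |\tau_n|=\Big|\int_{\mathbb R}P_n(x)\,d\mu(x)\Big|\leq \frac{2n!}{r_\varepsilon^n}\int_{\mathbb R}e^{\varepsilon|x|}\,d\mu(x)=n!\,M_\varepsilon r_\varepsilon^{-n},
\end{equation*}
which is of the form $|\tau_n|\le n!\,C^{n+1}$ for a suitable $C>0$. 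Corollary~\ref{cor.hol} then yields $\tau\in l_{-}^2$.

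For the sufficiency, $\ast_P$-positivity of $\tau$ and Theorem~\ref{t.moment-problem} produce a non-negative finite Borel measure $\mu$ representing $\tau$ by (\ref{eq.moment-reprez-new}). To upgrade to $\mu\in\mathcal{M}_a(\mathbb R)$, I would apply Remark~\ref{r.lapl.analytic}, i.e.\ verify the bound $\int_{\mathbb R}|P_n(x)|^2 d\mu(x)\le(n!)^2C^{n+1}$. The crucial observation is that the Parseval equality (\ref{eq.911}) proved inside Theorem~\ref{t.moment-problem}, applied to $f=g=\delta_n$, gives exactly
\begin{equation*}
 \int_{\mathbb R}|P_n(x)|^2\,d\mu(x)=(I_P\delta_n,I_P\delta_n)_{L^2(\mathbb R,\mu)}=(\delta_n,\delta_n)_{H_\tau}=\tau(\delta_n\ast_P\delta_n),
\end{equation*}
so the hypothesis (\ref{eq.rtfgj}) is, literally, the estimate required by Remark~\ref{r.lapl.analytic}. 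Conclude that $\mu\in\mathcal{M}_a(\mathbb R)$, so $\tau$ is an analytic moment functional.

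The main (and essentially only) non-routine point is the identification $\int|P_n|^2 d\mu=\tau(\delta_n\ast_P\delta_n)$ via Parseval; it is what makes the abstract positivity condition (\ref{eq.rtfgj}) match the geometric condition in Remark~\ref{r.lapl.analytic}. Everything else is direct application of previously established results, so I do not anticipate technical obstacles beyond choosing the right constants $\varepsilon$, $r$, $C$ in the necessity argument.
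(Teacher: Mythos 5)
Your proposal is correct and follows essentially the same route as the paper: the sufficiency argument (Theorem~\ref{t.moment-problem} for existence of $\mu$, then Parseval applied to $f=g=\delta_n$ to identify $\tau(\delta_n\ast_P\delta_n)=\int_{\mathbb R}P_n^2\,d\mu$, then Remark~\ref{r.lapl.analytic}) is exactly the paper's. For the necessity the paper instead passes through the generalized Laplace transform $l=S\tau\in{\rm Hol}_0(\mathbb{C})$ (Remark~\ref{rem.m-g} and Theorem~\ref{t.s-transform}), but your direct estimate of $|\tau_n|$ via (\ref{eq.estimate.polinom}) combined with Corollary~\ref{cor.hol} establishes the same bound $|\tau_n|\le n!\,C^{n+1}$ and is equally valid.
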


\begin{proof}
{\it Necessity}. Let $\tau\in\mathbb{C}^{\infty}$ be an analytic
moment functional on $\mathcal{A}$. Then from
Theorem~\ref{t.moment-problem} it immediately follows that $\tau$ is
$\ast_P$-positive on $\mathcal{A}$. Since by Remark~\ref{rem.m-g}
the function
$l(\lambda)=\sum_{n=0}^{\infty}\frac{\lambda^n}{n!}\tau_n$ belongs
to the space $\rm{Hol}_0(\mathbb{C})$, the fact that $\tau\in
l_{-}^2$ is a direct consequence of Theorem~\ref{eq.s-transform}.

{\it Sufficiency}. Suppose that $\tau$ is $\ast_P$-positive on
$\mathcal{A}$ and (\ref{eq.rtfgj}) holds. Then according to
Theorem~\ref{t.moment-problem} the $\ast_P$-positiveness of $\tau$
insures that  (\ref{eq.moment-reprez-new}) holds. Next, using
(\ref{eq.911}), (\ref{eq.Ip-Ia})) and (\ref{eq.rtfgj}) we get
\begin{align*}
    \tau(\delta_n\ast_P\delta_n)
    &=(\delta_n,\delta_n)_{H_{\tau}}
    =\int_{\mathbb{R}}(I_P\delta_n)^2(x) \,d\mu(x)\\
    &=\int_{\mathbb{R}}P^2_n(x) \,d\mu(x)\leq (n!)^2C^{n+1},\quad
    n\in\mathbb{N}_0.
\end{align*}
So, from Remark~\ref{r.lapl.analytic} we conclude that
$\mu\in\mathcal{M}_a(\mathbb{R})$.
\end{proof}

\begin{thm}\label{t.exp.unique}
 If $\tau=(\tau_n)_{n=0}^{\infty}\in \mathbb{C}^{\infty}$
 is an analytic moment functional on ($\mathcal{A},\ast_P$) then the measure
 $\mu$ in representation (\ref{eq.moment-reprez-new}) is uniquely defined.
\end{thm}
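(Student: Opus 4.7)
The plan is to reduce uniqueness to the classical fact that a finite non-negative Borel measure on $\mathbb{R}$ whose Laplace transform is analytic in a neighborhood of $0$ is uniquely determined by the germ of that Laplace transform at $0$.

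Suppose $\mu_1,\mu_2\in\mathcal{M}_a(\mathbb{R})$ both yield the representation $\tau_n=\int_{\mathbb{R}}P_n(x)\,d\mu_j(x)$ for all $n\in\mathbb{N}_0$. By Corollary~\ref{l.l1-l2} applied to each $\mu_j$, the generalized Laplace transform
\[
    l_j(\lambda):=\int_{\mathbb{R}}P(x,\lambda)\,d\mu_j(x)
    =\sum_{n=0}^{\infty}\frac{\lambda^{n}}{n!}\tau_n
\]
is an element of $\mathrm{Hol}_0(\mathbb{C})$ depending only on the sequence $\tau$ (cf.\ Remark~\ref{rem.m-g}); in particular $l_1\equiv l_2$ on some common neighborhood of $0$. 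So it remains to show that $\mu_j$ is uniquely recovered from $l_j$.

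Next I would use the specific form $P(x,\lambda)=\gamma(\lambda)e^{\alpha(\lambda)x}$ to convert $l_j$ into the ordinary Laplace transform $l_{\mu_j}(z):=\int_{\mathbb{R}} e^{xz}\,d\mu_j(x)$. Since $\gamma(0)=1$ and $\alpha(0)=0$, $\alpha'(0)\neq 0$, the functions $\gamma$ and $\alpha$ are biholomorphic near $0$ in the appropriate sense: $1/\gamma$ is holomorphic near $0$ and $\alpha$ admits a local holomorphic inverse $\alpha^{-1}$ with $\alpha^{-1}(0)=0$. For $\lambda$ sufficiently small, $\alpha(\lambda)$ lies in the domain of analyticity of $l_{\mu_j}$, and $l_j(\lambda)=\gamma(\lambda)\,l_{\mu_j}(\alpha(\lambda))$. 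Substituting $\lambda=\alpha^{-1}(z)$ for $z$ near $0$ gives
\[
    l_{\mu_j}(z)=\frac{l_j(\alpha^{-1}(z))}{\gamma(\alpha^{-1}(z))},
\]
so $l_{\mu_1}$ and $l_{\mu_2}$ coincide on a common neighborhood of $0\in\mathbb{C}$.

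Finally I would invoke the classical uniqueness result for analytic Laplace transforms: since $l_{\mu_j}\in\mathrm{Hol}_0(\mathbb{C})$, Proposition~\ref{r.analit-measure} identifies its Taylor coefficients with the ordinary moments $\int x^k\,d\mu_j$, so these moments coincide for $\mu_1$ and $\mu_2$; the growth estimate in Theorem~\ref{t.measure.analitic}(2) then ensures Carleman's condition, so both measures solve the same determinate Hamburger moment problem, forcing $\mu_1=\mu_2$. The only delicate point is this last step, which is standard; everything else is an essentially algebraic/analytic change of variables using the Sheffer structure and the $S$-transform bijection of Theorem~\ref{t.s-transform}.
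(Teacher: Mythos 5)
Your proof is correct, and it follows the same overall strategy as the paper's --- reduce to the determinacy of the classical Hamburger moment problem for an analytic measure --- but both halves of the reduction are carried out differently. For the passage from the $P_n$-moments to the ordinary moments, the paper argues purely algebraically: since each $P_n$ has exact degree $n$, one shows by induction that $\int x^n\,d\mu_1=\int x^n\,d\mu_2$ (the integrals being finite because $\mu_j\in\mathcal{M}_a(\mathbb{R})$). You instead pass through the generating function, writing $l_j(\lambda)=\gamma(\lambda)\,l_{\mu_j}(\alpha(\lambda))$ and inverting $\alpha$ locally; this is valid (Theorem~\ref{t.measure.analitic}(4) guarantees integrability, $\gamma(0)=1$ and $\alpha'(0)\neq0$ give the local inverses), and it exploits the Sheffer structure in the spirit of Remark~\ref{rem.m-g}, but note that the paper's induction is more elementary and does not need the polynomials to be Sheffer at all --- only that $\deg P_n=n$. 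For the determinacy step, the paper stays inside its operator-theoretic framework: uniqueness of $\mu$ is equivalent to essential self-adjointness of the shift $J$ in $H_\tau$, which is verified via the quasianalytic-vector criterion (Theorem~\ref{thm.quasianalytic}) using $\tau_{2k+2n}\leq(2k+2n)!\,C^{2k+2n+1}$. You invoke Carleman's condition directly; the two are essentially equivalent here (your bound $\int x^{2n}d\mu\leq(2n)!C^{2n+1}$ gives $\bigl(\int x^{2n}d\mu\bigr)^{-1/(2n)}\gtrsim 1/n$, so Carleman's sum diverges), and the divergent series you need is literally the same one the paper writes down. So the argument goes through; it is just a more analytic packaging of the same underlying facts.
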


\begin{proof}
At first, we prove the statement for the case $P_n(x)=x^n$. So, we
need to show that for an analytic moment functional
$\tau=(\tau_n)_{n=0}^{\infty}\in \mathbb{C}^{\infty}$ on
($\mathcal{A},\ast$) ($\ast$ is defined by (\ref{eq.202-1})) the
measure $\mu$ in the representation
\begin{equation}\label{eq.moment-reprez-prf}
   \tau_n=\int_{\mathbb{R}}x^n\,d\mu(x),\quad n\in \mathbb{N}_0,
\end{equation}
is unique.

It is well known (see, e.g., \cite{B65}, Ch.~8, Theorem~1.1) that
the measure $\mu$ in representation (\ref{eq.moment-reprez-prf}) is
unique if and only if the operator $l_{\rm fin}\ni f\mapsto
Jf=\delta_1\ast f\in l_{\rm fin}$ is essentially self-adjoint (i.e.,
has a unique self-adjoint extension) in the space $\mathcal{H}_\tau$
(see the proof of Theorem~\ref{t.moment-problem} for the definition
of $\mathcal{H}_\tau$). Since ${\rm
span}\{\delta_n\,|\,n\in\mathbb{N}_0\}=l_{\rm fin}$ and $l_{\rm
fin}$ is dense in $\mathcal{H}_\tau$, then according to the
quasianalytic criterion of self-adjointness (see
Theorem~\ref{thm.quasianalytic}) it is sufficient to check that
every vector $\delta_k, k\in\mathbb{N}_0$, is quasianalytic, i.e.,
equality (\ref{eq.quasianalytic}) holds for every $\delta_k$.

It is easy to see that $J^n\delta_k=\delta_{k+n}$,
$\|J^n\delta_k\|_{\mathcal{H}_\tau}^2=\|\delta_{k+n}\|_{\mathcal{H}_\tau}^2=\tau_{2k+2n}$.
Since $\mu\in\mathcal{M}_a(\mathbb{R})$, then there exists $C>0$
such that $|\tau_n|\leq n!C^{n+1}$ for all $n\in\mathbb{N}_0$ and
therefore
$$
   \sum_{n=1}^{\infty}\frac{1}{\sqrt[n]{\|J^n\delta_k\|_{\mathcal{H}_\tau}}}=
   \sum_{n=1}^{\infty}\frac{1}{\sqrt[2n]{\tau_{2k+2n}}}=\infty,
   \quad k\in\mathbb{N}_0,
$$
i.e., the measure $\mu$ in representation
(\ref{eq.moment-reprez-prf}) is unique.

Let us prove the general case. Suppose that measures
$\mu_1,\mu_2\in\mathcal{M}_a(\mathbb{R})$ are such that
$\mu_1\neq\mu_2$ and
\begin{equation*}
   \int_{\mathbb{R}}P_n(x)d\mu_1(x)=\int_{\mathbb{R}}P_n(x)d\mu_2(x),\quad
   n\in\mathbb{N}_0.
\end{equation*}
Then it is easy to check by induction that
\begin{equation*}
   \int_{\mathbb{R}}x^nd\mu_1(x)=\int_{\mathbb{R}}x^nd\mu_2(x),\quad
   n\in\mathbb{N}_0.
\end{equation*}
So, $\mu_1=\mu_2$, due to the above proven, which leads to a
contradiction.
\end{proof}

\begin{rem}
From the proof of Theorem~\ref{t.exp.unique} and
Remark~\ref{r.Fouirier} it easily follows the next well known
result: {\it If $\mu\in\mathcal{M}_a(\mathbb{R})$ then the set of
all polynomials $\mathbb{C}[x]$  is dense in the space
$L^2(\mathbb{R},\mu)$}.
\end{rem}



\subsection{Analytic moment functionals connected with the monomials}\label{s.moment-problem-exp-convex}

Let $P(x,\lambda)=e^{x\lambda}$ and $\mathcal{A}=l_{\rm fin}$ be an
algebra with the Cauchy product $\ast$
(\ref{eq.convolution.classic}).


\begin{thm}\label{t.exp.kriterij}
 A functional $\tau\in \mathbb{C}^{\infty}$
 is  an analytic moment functional on $(\mathcal{A},\ast)$, i.e.,
there exists a measure $\mu\in\mathcal{M}_a(\mathbb{R})$ such that
\begin{equation}\label{eq.moment-class}
   \tau(\delta_n)=\tau_n=\int_{\mathbb{R}}x^n\,d\mu(x),\quad n\in \mathbb{N}_0,
\end{equation}
 if and only if $\tau$ is $\ast$-positive on $\mathcal{A}$ (i.e., (\ref{eq.202-1}) holds)  and $\tau\in  l_{-}^2$.

 For an analytic moment functional $\tau\in \mathbb{C}^{\infty}$ on $(\mathcal{A},\ast)$ the
 measure $\mu$ in representation (\ref{eq.moment-class}) is unique defined.
\end{thm}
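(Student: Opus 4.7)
My plan is to deduce Theorem~\ref{t.exp.kriterij} directly from the two general results Theorem~\ref{t.moment.sheffer.general} and Theorem~\ref{t.exp.unique}, applied to the particular Sheffer family $P_n(x)=x^n$ (which corresponds to $\gamma(\lambda)\equiv 1$ and $\alpha(\lambda)=\lambda$ in (\ref{eq.Schefer}), so $P(x,\lambda)=e^{x\lambda}$, and the associated product $\ast_P$ is exactly the Cauchy product $\ast$).

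Necessity is immediate: any analytic moment functional on $(\mathcal{A},\ast)$ is in particular an analytic moment functional on $(\mathcal{A},\ast_P)$ in the sense of Section~\ref{s.analitik-moment-problem}, so Theorem~\ref{t.moment.sheffer.general} yields both that $\tau$ is $\ast$-positive and that $\tau\in l^2_{-}$.

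For sufficiency I want to apply the sufficient half of Theorem~\ref{t.moment.sheffer.general}, so I only need to verify the bound (\ref{eq.rtfgj}) from the assumption $\tau\in l^2_{-}$. In the monomial case, formula (\ref{eq.convolution.classic}) gives $(\delta_n\ast\delta_n)_k=\delta_{2n,k}$, hence $\tau(\delta_n\ast\delta_n)=\tau_{2n}$. By Corollary~\ref{cor.hol}, $\tau\in l^2_{-}$ is equivalent to the existence of $C>0$ with $|\tau_n|\leq n!\,C^{n+1}$ for all $n\in\mathbb{N}_0$. Combining this with the elementary inequality $(2n)!\leq 4^n(n!)^2$, I get
\[
   \tau(\delta_n\ast\delta_n)=\tau_{2n}\leq (2n)!\,C^{2n+1}\leq (n!)^2\cdot C(4C^2)^n\leq (n!)^2\,(C')^{n+1}
\]
for a suitable constant $C'>0$. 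Thus (\ref{eq.rtfgj}) holds and Theorem~\ref{t.moment.sheffer.general} produces the required $\mu\in\mathcal{M}_a(\mathbb{R})$.

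Uniqueness of $\mu$ in (\ref{eq.moment-class}) is then immediate from Theorem~\ref{t.exp.unique}, whose proof was carried out first precisely in the monomial case via the quasianalytic criterion (Theorem~\ref{thm.quasianalytic}) applied to the shift operator $J$. There is really no obstacle in this argument; the only small observation is the combinatorial step $(2n)!\leq 4^n(n!)^2$ needed to convert the growth bound $|\tau_n|\leq n!\,C^{n+1}$ coming from $\tau\in l^2_{-}$ into the quadratic-form bound required by (\ref{eq.rtfgj}).
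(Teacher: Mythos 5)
Your proof is correct, and the necessity and uniqueness parts coincide with the paper's (both are quoted off Theorem~\ref{t.moment.sheffer.general} and Theorem~\ref{t.exp.unique} respectively). The sufficiency, however, is routed differently. You verify the general sufficient condition (\ref{eq.rtfgj}) of Theorem~\ref{t.moment.sheffer.general}: you compute $(\delta_n\ast\delta_n)_k=\delta_{2n,k}$, so $\tau(\delta_n\ast\delta_n)=\tau_{2n}$, and then convert the bound $|\tau_n|\le n!\,C^{n+1}$ from Corollary~\ref{cor.hol} into $\tau_{2n}\le (n!)^2 (C')^{n+1}$ via $(2n)!\le 4^n(n!)^2$; internally this forces the analyticity of $\mu$ through the $L^2$-estimate $\int P_n^2\,d\mu\le (n!)^2C^{n+1}$ of Remark~\ref{r.lapl.analytic}. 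The paper instead exploits what is special about the monomial case: once Remark~\ref{r.class-moment} produces $\mu$ with $\tau_n=\int x^n\,d\mu$, the numbers $\tau_n$ \emph{are} the power moments of $\mu$, so the bound $|\int x^n\,d\mu|\le n!\,C^{n+1}$ is exactly condition~(2) of Theorem~\ref{t.measure.analitic} and yields $\mu\in\mathcal{M}_a(\mathbb{R})$ with no convolution computation and no doubling of the factorial. Your route is the one the paper actually uses for the Newton-polynomial analogue (Theorem~\ref{t.exp.kriterij.Newton}), so it has the virtue of demonstrating the general mechanism; the paper's route is shorter here precisely because $P_n(x)=x^n$ makes the moments directly visible. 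Both are valid.
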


\begin{proof}
The necessity immediately follows from
Theorem~\ref{t.moment.sheffer.general}.

Let us prove the sufficiency. Assume that $\tau$ is $\ast$-positive
and $\tau\in l_{-}^2$. Then from Remark~\ref{r.class-moment} we
conclude that $\tau$ is a moment functional on $(\mathcal{A},\ast)$,
i.e., there exists a Borel measure $\mu$ on $\mathbb{R}$ such that
(\ref{eq.moment-class}) holds.

Let us check that $\mu\in\mathcal{M}_a(\mathbb{R})$. Since
$\tau=(\tau_n)_{n=0}^{\infty}\in l_{-}^2$, from
Corollary~\ref{cor.hol} it follows that there exists $C>0$ such that
\begin{equation*}
   |\tau_n|=\Big|\int_{\mathbb{R}}x^n\,d\mu(x)\Big|\leq n!C^{n+1},\quad
   n\in\mathbb{N}_0.
\end{equation*}
Hence, from Theorem~\ref{t.measure.analitic} we conclude that
$\mu\in\mathcal{M}_a(\mathbb{R})$.

The last assertion of the theorem directly follows from
Theorem~\ref{t.exp.unique}.
\end{proof}

Let us show that the class of analytic moment functionals on
$(\mathcal{A},\ast)$ is closely related to the class of
exponentially convex functions. Recall that {\it a function
$k:(-2a,2a)\to\mathbb{C}$, where $0<a\leq\infty$, is called
exponentially convex if
  \begin{equation}\label{eq.exp.convex}
     \sum_{i,j=0}^{\infty}k(x_i+x_j)f_i\bar{f}_j\geq 0
  \end{equation}
  for all $f=(f_n)_{n=0}^{\infty}\in l_{\rm fin}$ and $x_i, x_j\in (-a,a)$.
}

The classical Bernstein's theorem asserts (see, e.g.,
\cite{Akhiezer}, Ch. 5, \S~5; \cite{B65}, Ch. 8, \S~3): {\it A
continuous function $k:(-2a,2a)\to\mathbb{C}$ is exponentially
convex if and only if there exists a non-negative finite Borel
measure $\mu$ on $\mathbb{R}$ such that
 $$
     k(\lambda)=l_{\mu}(\lambda)=\int_{\mathbb{R}}e^{x\lambda}\,d\mu(x),
     \quad \lambda\in (-2a,2a).
 $$
 The measure $\mu$ in the latter representation is unique}.
 It follows from Theorem~\ref{t.measure.analitic} that in fact
 $\mu\in\mathcal{M}_a(\mathbb{R})$ and therefore $k:(-2a,2a)\to\mathbb{C}$ is an analytic in a
 neighborhood of zero in $\mathbb{R}$.

From Bernstein's theorem, Theorem~\ref{t.exp.kriterij} and
Remark~\ref{rem.m-g} (for $P(x,\lambda)=e^{x\lambda}$) we get the
following result.

\begin{thm}\label{t.exp-conv-analyt}
 A functional $\tau=(\tau_n)_{n=0}^{\infty}\in \mathbb{C}^{\infty}$
 is an analytic moment functional on $(\mathcal{A},\ast)$
 if and only if the function 
\begin{equation*}
    k(\lambda):=\sum_{n=0}^{\infty}\frac
    {\lambda^n}{n!}\tau_n
\end{equation*}
is well defined and exponentially convex  in some neighborhood of
zero in $\mathbb{R}$.

Vice versa, an analytic in a neighborhood $\mathcal{U}$ of zero in
$\mathbb{R}$ function $k:\mathcal{U}\to\mathbb{C}$ is exponentially
convex if and only if the functional
$$
    \tau=(\tau_n)_{n=0}^{\infty}=(k^{(n)}(0))_{n=0}^{\infty},\quad
    \tau_n:=k^{(n)}(0)=\frac{d^n}{d\lambda^n}k(\lambda)\Big|_{\lambda=0},
$$
is an analytic moment functional on $(\mathcal{A},\ast)$.
\end{thm}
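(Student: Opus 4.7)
The plan is to derive both equivalences by combining Theorem~\ref{t.exp.kriterij}, Remark~\ref{rem.m-g}, Theorem~\ref{t.measure.analitic} and Bernstein's theorem (quoted in the text), each of which is already available.

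First I would prove the direction ``analytic moment functional $\Rightarrow$ $k$ exponentially convex''. Given an analytic moment functional $\tau=(\tau_n)_{n=0}^{\infty}$ on $(\mathcal{A},\ast)$ with representing measure $\mu\in\mathcal{M}_a(\mathbb{R})$, Remark~\ref{rem.m-g} (taken with $P(x,\lambda)=e^{x\lambda}$) identifies the formal series $k(\lambda)=\sum_{n=0}^{\infty}\frac{\lambda^n}{n!}\tau_n$ with the Laplace transform $l_{\mu}\in{\rm Hol}_0(\mathbb{C})$, so $k$ is well-defined, and even real-analytic, in some neighborhood of zero. Exponential convexity is then a one-line computation,
$$
   \sum_{i,j=0}^{\infty}k(x_i+x_j)f_i\bar f_j
   =\int_{\mathbb{R}}\Bigl|\sum_{i=0}^{\infty}f_i e^{x_i x}\Bigr|^{2} d\mu(x)\ge 0,
$$
obtained by substituting $k(x_i+x_j)=\int_{\mathbb{R}} e^{(x_i+x_j)x}d\mu(x)$ and swapping the (finite) sum with the integral.

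For the converse, suppose $k$ is exponentially convex and analytic on a real neighborhood $\mathcal{U}$ of $0$. Continuity is automatic, so Bernstein's theorem yields a unique non-negative finite Borel measure $\mu$ on $\mathbb{R}$ with $k(\lambda)=\int_{\mathbb{R}}e^{x\lambda}\,d\mu(x)$ for $\lambda\in\mathcal{U}$. Because $k$ is real-analytic at $0$, it extends to an element of ${\rm Hol}_0(\mathbb{C})$; thus $l_{\mu}$ agrees with this extension on a complex disk, which by the very definition of $\mathcal{M}_a(\mathbb{R})$ (equivalently, by Theorem~\ref{t.measure.analitic}) means $\mu\in\mathcal{M}_a(\mathbb{R})$. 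Proposition~\ref{r.analit-measure} then identifies the Taylor coefficients as moments: $\tau_n=k^{(n)}(0)=\int_{\mathbb{R}}x^n\,d\mu(x)$, so $\tau$ is an analytic moment functional on $(\mathcal{A},\ast)$. This proves the first ``if and only if''.

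The ``vice versa'' half is essentially the same equivalence read from the function side: if $k$ is analytic near $0$ in $\mathbb{R}$ and $\tau_n:=k^{(n)}(0)$, then by Theorem~\ref{t.s-transform} the series $\sum_n \frac{\lambda^n}{n!}\tau_n$ coincides with $k$ in some complex neighborhood, so both formulations describe the same pair $(\tau,k)$, and the two implications proved above apply verbatim. The only point requiring any care in the whole argument is the passage from real-analyticity of $k$ on $\mathcal{U}\subset\mathbb{R}$ to holomorphy of its extension on a complex neighborhood of $0$; this is a standard property of real-analytic functions and is what makes Bernstein's representation compatible with the ${\rm Hol}_0(\mathbb{C})$ framework. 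I do not expect any further obstacle, since the heavy lifting (the generalized eigenvector expansion and the characterization of $\mathcal{M}_a(\mathbb{R})$) is already done in the previous sections.
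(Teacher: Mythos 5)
Your proposal is correct and follows essentially the same route the paper indicates (Bernstein's theorem plus Remark~\ref{rem.m-g} and the characterization of $\mathcal{M}_a(\mathbb{R})$); the only cosmetic difference is that you verify exponential convexity directly from the integral representation instead of routing it through the $\ast$-positivity criterion of Theorem~\ref{t.exp.kriterij}. The one step you flag as delicate is handled most cleanly by noting that finiteness of $\int_{\mathbb{R}}e^{x\lambda}\,d\mu(x)$ for real $\lambda\in(-2a,2a)$ already gives $e^{r|x|}\in L^1(\mathbb{R},\mu)$ for $r<2a$, i.e.\ condition (3) of Theorem~\ref{t.measure.analitic}, so $\mu\in\mathcal{M}_a(\mathbb{R})$ without any appeal to complex extension of $k$.
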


\begin{cor}
An analytic in a neighborhood $\mathcal{U}$ of zero in $\mathbb{R}$
function $k:\mathcal{U}\to\mathbb{C}$ is exponentially convex if and
only if
  \begin{equation*}
      \sum_{i,j=0}^{\infty}k^{(i+j)}(0)f_i\bar{f}_j\geq
      0,\quad f=(f_n)_{n=0}^{\infty}\in l_{\rm fin}.
   \end{equation*}
\end{cor}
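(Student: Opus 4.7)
The plan is to deduce this corollary as a direct consequence of Theorem~\ref{t.exp-conv-analyt}, by translating the $\ast$-positivity condition (\ref{eq.202-1}) into the stated inequality via the substitution $\tau_n := k^{(n)}(0)$.

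First I would observe that if $k$ is analytic in a neighborhood $\mathcal{U}$ of zero, then by Cauchy's estimates applied to a small closed disc inside $\mathcal{U}$, there is a constant $C>0$ with $|k^{(n)}(0)| \leq n!\, C^{n+1}$ for all $n \in \mathbb{N}_0$. Hence by Corollary~\ref{cor.hol}, the sequence $\tau = (k^{(n)}(0))_{n=0}^{\infty}$ automatically belongs to $l_{-}^2$. This means that in Theorem~\ref{t.exp-conv-analyt}, the only remaining content that distinguishes exponentially convex $k$ from an arbitrary analytic $k$ is precisely the $\ast$-positivity of $\tau$ (on the Cauchy-product algebra $(\mathcal{A},\ast)$).

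Next I would rewrite the $\ast$-positivity (\ref{eq.202-1}) for this $\tau$: it says
\[
   \sum_{i,j=0}^{\infty} \tau_{i+j} f_i \bar{f}_j \;=\; \sum_{i,j=0}^{\infty} k^{(i+j)}(0)\, f_i \bar{f}_j \;\geq\; 0
\]
for every $f = (f_n)_{n=0}^{\infty} \in l_{\rm fin}$, which is exactly the inequality asserted in the corollary. Thus the condition displayed in the corollary is equivalent to $\tau$ being a $\ast$-positive functional on $\mathcal{A}$.

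Combining these two observations: by Theorem~\ref{t.exp-conv-analyt}, the analytic function $k$ is exponentially convex if and only if $\tau = (k^{(n)}(0))_{n=0}^{\infty}$ is an analytic moment functional on $(\mathcal{A}, \ast)$; and by Theorem~\ref{t.exp.kriterij}, the latter holds if and only if $\tau$ is $\ast$-positive and lies in $l_{-}^2$. Since the $l_{-}^2$ condition is automatic from analyticity of $k$, the two remaining equivalent conditions reduce to the inequality in the statement. I expect no real obstacle here — the work has all been done in the preceding theorems, and the only step requiring attention is verifying that analyticity of $k$ supplies the $l_{-}^2$ membership for free, which is a routine application of Cauchy's estimates together with Corollary~\ref{cor.hol}.
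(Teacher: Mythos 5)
Your proposal is correct and follows exactly the route the paper intends: the corollary is stated as an immediate consequence of Theorem~\ref{t.exp-conv-analyt} together with Theorem~\ref{t.exp.kriterij}, with the $l_{-}^2$ membership of $\tau=(k^{(n)}(0))_{n=0}^{\infty}$ supplied for free by analyticity via Cauchy's estimates and Corollary~\ref{cor.hol}. Nothing is missing.
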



\subsection{Analytic moment functionals connected with the Newton
polynomials}\label{s.newton-bogolubov}

Let $P(x,\lambda)$ be a generating function of the Newton
polynomials $(x)_n=\prod_{i=0}^{n-1}(x-i)$, that is
$$
    P(x,\lambda):=(1+\lambda)^x=e^{x\log(1+\lambda)}=\sum_{n=0}^{\infty}\frac{\lambda^n}{n!}(x)_n,\quad
     |\lambda|<1,
$$
and $\mathcal{A}=l_{\rm fin}$ be an algebra with the product $\star$
(\ref{eq.convolution.k-k}).

Now an analogue of Theorem~\ref{t.exp.kriterij} holds.

\begin{thm}\label{t.exp.kriterij.Newton}
 A functional $\tau\in \mathbb{C}^{\infty}$
 is  an analytic moment functional on $(\mathcal{A},\star)$, i.e.,
 there exists a measure $\mu\in\mathcal{M}_a(\mathbb{R})$ such that
\begin{equation}\label{eq.moment-k-k}
   \tau(\delta_n)=\tau_n=\int_{\mathbb{R}}(x)_n\,d\mu(x),\quad n\in \mathbb{N}_0,
\end{equation}
 if and only if $\tau$ is $\star$-positive on $\mathcal{A}$ (i.e., (\ref{eq.202-1-newton}) holds)  and $\tau\in  l_{-}^2$.

  For an analytic moment functional $\tau\in \mathbb{C}^{\infty}$ on $(\mathcal{A},\star)$ the
  measure $\mu$ in  (\ref{eq.moment-k-k}) is unique.
\end{thm}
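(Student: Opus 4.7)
The plan is to mimic the proof of Theorem~\ref{t.exp.kriterij}, substituting the Cauchy product with $\star$ and extracting the needed analytic estimate from the explicit formula (\ref{eq.convolution.k-k}). Uniqueness and necessity will come almost for free from earlier results: uniqueness is immediate from Theorem~\ref{t.exp.unique} applied to the Sheffer family $(x)_n$, and necessity is exactly the first half of Theorem~\ref{t.moment.sheffer.general}. Indeed, if $\tau$ is an analytic moment functional on $(\mathcal{A},\star)$ then it is in particular a moment functional, hence $\star$-positive by Theorem~\ref{t.moment-problem}, while Remark~\ref{rem.m-g} combined with Theorem~\ref{t.s-transform} yields $\tau\in l_{-}^{2}$.

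The bulk of the work is in the sufficiency. Assume $\tau$ is $\star$-positive and $\tau\in l_{-}^{2}$. Theorem~\ref{t.moment-problem} already produces a non-negative finite Borel measure $\mu$ on $\mathbb{R}$ with $\tau_n=\int_{\mathbb{R}}(x)_n\,d\mu(x)$, so it only remains to show that $\mu\in\mathcal{M}_{a}(\mathbb{R})$. By the sufficient half of Theorem~\ref{t.moment.sheffer.general}, it suffices to verify the bound $\tau(\delta_{n}\star\delta_{n})\le(n!)^{2}C^{n+1}$ for a suitable $C>0$. From (\ref{eq.convolution.k-k}) only those triples $(i,j,k)$ with $i+j=n$ and $j+k=n$ contribute to $(\delta_{n}\star\delta_{n})_{m}$, which forces $i=k=m-n$, $j=2n-m$ and $n\le m\le 2n$, giving the closed expression
\begin{equation*}
   \tau(\delta_{n}\star\delta_{n})=\sum_{m=n}^{2n}\tau_{m}\,\frac{(n!)^{2}}{\bigl((m-n)!\bigr)^{2}(2n-m)!}.
\end{equation*}

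Corollary~\ref{cor.hol} now converts $\tau\in l_{-}^{2}$ into the estimate $|\tau_{m}|\le m!\,C^{m+1}$; reindexing $m=n+j$ with $0\le j\le n$ bounds the displayed sum by
\begin{equation*}
   (n!)^{2}\,C^{n+1}\sum_{j=0}^{n}\frac{(n+j)!}{(n-j)!\,(j!)^{2}}\,C^{j}.
\end{equation*}
Recognising the ratio $(n+j)!/\bigl((n-j)!\,j!\,j!\bigr)$ as the trinomial coefficient $\binom{n+j}{\,n-j,\,j,\,j\,}\le 3^{n+j}$, the inner sum is at most $3^{n}\sum_{j=0}^{n}(3C)^{j}\le\widetilde C^{\,n+1}$ for a suitable $\widetilde C>0$. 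This verifies (\ref{eq.rtfgj}) and hence, via Theorem~\ref{t.moment.sheffer.general}, completes the sufficiency. The principal obstacle is precisely this combinatorial step: a naive estimate that replaces $(n+j)!$ by $(2n)!$ destroys the crucial $(n!)^{2}$ growth, so one must exploit the forced symmetry $i=k=m-n$ in the support of $(\delta_{n}\star\delta_{n})$ to keep the factor $(n-j)!\,(j!)^{2}$ in the denominator intact. Everything else reduces to direct citation of previously established results.
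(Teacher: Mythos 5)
Your proposal is correct and follows essentially the same route as the paper: necessity and uniqueness are quoted from Theorems~\ref{t.moment.sheffer.general} and~\ref{t.exp.unique}, and sufficiency is reduced to verifying the bound (\ref{eq.rtfgj}) via the explicit value $(\delta_n\star\delta_n)_m=\frac{(n!)^2}{((m-n)!)^2(2n-m)!}$ together with $|\tau_m|\le m!\,C^{m+1}$ from Corollary~\ref{cor.hol}. The only (immaterial) difference is the elementary combinatorial estimate: you bound the ratio by the trinomial coefficient $\binom{n+j}{n-j,\,j,\,j}\le 3^{n+j}$, whereas the paper applies the binomial bound $m!/(n!(m-n)!)\le 2^m$ twice to obtain $4^m(n!)^2$.
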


\begin{proof} The necessity immediately follows from Theorem~\ref{t.moment.sheffer.general}.

Let us prove the converse.  Suppose that $\tau$ is $\star$-positive
on $\mathcal{A}$ and $\tau\in l_{-}^2$. Then due to
Theorem~\ref{t.moment.sheffer.general}  it is sufficient to show
that there exists $C>0$ such that
  \begin{equation*}
    \tau(\delta_n\star\delta_n)
    =\int_{\mathbb{R}}(x)_n^2\,d\mu(x) \leq (n!)^2C^{n+1}, \quad
    n\in\mathbb{N}_0.
  \end{equation*}

Since $\tau=(\tau_n)_{n=0}^{\infty}\in l_{-}^2$, there exists
$\widetilde{C}>0$ such that $|\tau_n|\leq n!\widetilde{C}^{n+1}$ for
all $n\in\mathbb{N}_0$. Hence, taking into account that (see
(\ref{eq.convolution.k-k}))
    \begin{equation*}
        (\delta_n\star\delta_n)_m=
        \begin{cases}
             \displaystyle\frac{(n!)^2}{((m-n)!)^2(2n-m)!}, &  \text{if}\,\,m\in\{n,\ldots,2n\},\\
             0, & \hbox{otherwise,}
        \end{cases}
    \end{equation*}
we get
\begin{equation}\label{eq.pr.Newton.1}
   \tau(\delta_n\star\delta_n)
   =
   \sum_{m=n}^{2n}\tau_m\frac{(n!)^2}{((m-n)!)^2(2n-m)!} 
    \leq \sum_{m=n}^{2n}\widetilde{C}^{m+1}\frac{m!(n!)^2}{((m-n)!)^2(2n-m)!}.
\end{equation}

Let us estimate the expression
$$
   \frac{m!(n!)^2}{((m-n)!)^2(2n-m)!}.
$$
Using the bound for the binomial coefficients
$$
   \frac{m!}{n!(m-n)!}\leq 2^m,\quad m\in\mathbb{N}_0,
$$
we get
\begin{equation}\label{eq.pr.Newton.2}
\begin{split}
    \frac{m!(n!)^2}{((m-n)!)^2(2n-m)!}
    &=\frac{(m!)^2(n!)^4}{((m-n)!)^2(n!)^2m!(2n-m)!}\\
    &\leq 4^m\frac{(n!)^4}{m!(2n-m)!}
    \leq 4^{m}(n!)^2
\end{split}
\end{equation}
for all $m\in\{n,\ldots,2n\}$.

From (\ref{eq.pr.Newton.1}) and (\ref{eq.pr.Newton.2}) we conclude
that
$$
    \tau(\delta_n\star\delta_n)
    =\int_{\mathbb{R}}(x)_n^2\,d\mu(x)
    \leq \widetilde{C}^{2n+1}4^{2n+1}(n!)^2\leq (n!)^2C^{n+1},
$$
where $C:=\max\{8\widetilde{C}^2,\,4\widetilde{C}\}$. So, the
sufficiency is proved.

The last assertion of the theorem directly follows from
Theorem~\ref{t.exp.unique}.
\end{proof}

Let us establish a relation between the analytic moment functional
on $(\mathcal{A},\star)$ and a one-dimensional analog of the
Bogoliubov generating functionals. We say that {\it a function
$B:\mathcal{U}\to\mathbb{C}$ ($\mathcal{U}$ is a neighborhood of
zero in $\mathbb{C}$) is a Bogoliubov  functional in $\mathcal{U}$
if $B$ admits the following integral representation
\begin{equation}\label{eq.Bogoliubov-functional}
    B(\lambda)=\int_{\mathbb{R}}(1+\lambda)^xd\mu(x)=\int_{\mathbb{R}}e^{x\log(1+\lambda)}d\mu(x),
    \quad \lambda\in \mathcal{U},
\end{equation}
with some non-negative finite Borel measure $\mu$ on $\mathbb{R}$.}
It follows from Theorem~\ref{t.measure.analitic} that the measure
$\mu$ in representation (\ref{eq.Bogoliubov-functional}) is actually
analytical, i.e., $\mu\in\mathcal{M}_a(\mathbb{R})$.

It should be noticed that the classical Bogoliubov or generating
functionals were introduced by N.~N.~Bogoliubov in \cite{Bogoliubov}
to define correlation functions for statistical mechanics systems
(this functional is defined by analogue with
(\ref{eq.Bogoliubov-functional}) but for measures on the space of
finite configuration). We refer to, e.g., \cite{Nazin, KKO06} for
details, historical remarks and references therein.

An analogue of Theorem~\ref{t.exp-conv-analyt} holds.
\begin{thm}\label{t.bogolub.one.dim}
 A functional $\tau=(\tau_n)_{n=0}^{\infty}\in \mathbb{C}^{\infty}$
 is an analytic moment functional on $(\mathcal{A},\star)$
 if and only if the function 
\begin{equation*}
    B(\lambda):=\sum_{n=0}^{\infty}\frac
    {\lambda^n}{n!}\tau_n
\end{equation*}
is the Bogoliubov functional in some neighborhood of zero in
$\mathbb{C}$.

Vice versa, an analytic in a neighborhood $\mathcal{U}$ of zero in
$\mathbb{C}$ function $B:\mathcal{U}\to\mathbb{C}$ is the Bogoliubov
functional in $\mathcal{U}$ if and only if the functional
$$
    \tau=(\tau_n)_{n=0}^{\infty}=(B^{(n)}(0))_{n=0}^{\infty},\quad
    \tau_n:=B^{(n)}(0)=\frac{d^n}{d\lambda^n}B(\lambda)\Big|_{\lambda=0},
$$
is an analytic moment functional on $(\mathcal{A},\star)$.
\end{thm}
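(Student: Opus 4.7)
The plan is to deduce this theorem as a direct consequence of Theorem~\ref{t.exp.kriterij.Newton} together with the generating-function identity $(1+\lambda)^x = \sum_{n=0}^\infty \frac{\lambda^n}{n!}(x)_n$, mirroring the argument used for the monomial case in Theorem~\ref{t.exp-conv-analyt}. The key observation is that the power series defining $B(\lambda)$ and the moment representation of $\tau_n$ are exactly Taylor-dual to each other via the Newton-polynomial generating function, so the equivalence reduces to an interchange of sum and integral justified by the analyticity results of Section~\ref{s.sheffer.polynom}.

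First I would prove the forward direction. Suppose $\tau$ is an analytic moment functional on $(\mathcal{A},\star)$, so there exists $\mu\in\mathcal{M}_a(\mathbb{R})$ with $\tau_n=\int_{\mathbb{R}}(x)_n\,d\mu(x)$ for all $n\in\mathbb{N}_0$. Since $P(x,\lambda)=(1+\lambda)^x$ is the exponential generating function of the Sheffer family $(x)_n$, Corollary~\ref{l.l1-l2} gives, for $|\lambda|$ small enough,
$$
  \int_{\mathbb{R}}(1+\lambda)^x\,d\mu(x)=\sum_{n=0}^{\infty}\frac{\lambda^n}{n!}\int_{\mathbb{R}}(x)_n\,d\mu(x)=\sum_{n=0}^{\infty}\frac{\lambda^n}{n!}\tau_n=B(\lambda),
$$
so $B$ is a Bogoliubov functional in a neighborhood of $0$.

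Next I would handle the converse. Assume $B(\lambda)=\sum_{n=0}^\infty\frac{\lambda^n}{n!}\tau_n$ admits representation~(\ref{eq.Bogoliubov-functional}) with some non-negative finite Borel measure $\mu$ on $\mathbb{R}$. By the remark following~(\ref{eq.Bogoliubov-functional}) (which appeals to Theorem~\ref{t.measure.analitic}) the measure $\mu$ actually lies in $\mathcal{M}_a(\mathbb{R})$. Applying Corollary~\ref{l.l1-l2} again we rewrite the right-hand side of~(\ref{eq.Bogoliubov-functional}) as a convergent power series and compare coefficients with the defining series of $B$. By Theorem~\ref{t.s-transform} (uniqueness of the $S$-transform on $l^2_-$, or simply uniqueness of Taylor coefficients), we obtain $\tau_n=\int_{\mathbb{R}}(x)_n\,d\mu(x)$ for every $n\in\mathbb{N}_0$, so $\tau$ is an analytic moment functional on $(\mathcal{A},\star)$.

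The ``vice versa'' formulation is then immediate: if $B$ is analytic in a neighborhood $\mathcal{U}\subset\mathbb{C}$ of $0$ and $\tau_n:=B^{(n)}(0)$, then by definition $B(\lambda)=\sum_{n=0}^\infty\frac{\lambda^n}{n!}\tau_n$ in a neighborhood of $0$, and the preceding equivalence applies verbatim. No real obstacle arises: the only point that needs care is the justification of interchanging summation and integration, but this is precisely the content of Corollary~\ref{l.l1-l2}, which is why the analyticity condition $\mu\in\mathcal{M}_a(\mathbb{R})$ is built into the definition of an analytic moment functional.
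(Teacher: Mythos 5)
Your proof is correct and follows exactly the route the paper intends: the paper states this theorem without proof as ``an analogue of Theorem~\ref{t.exp-conv-analyt}'', and the argument it has in mind is precisely the combination of Corollary~\ref{l.l1-l2} (to interchange sum and integral for the generating function $(1+\lambda)^x$) with Theorem~\ref{t.measure.analitic} (to upgrade the measure in the Bogoliubov representation to an analytic one). You also correctly note the one structural difference from the monomial case, namely that no Bernstein-type theorem is needed here because the Bogoliubov functional is \emph{defined} by its integral representation, so the equivalence reduces to comparing Taylor coefficients.
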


\begin{cor}
An analytic in a neighborhood $\mathcal{U}$ of zero in $\mathbb{C}$
function $B:\mathcal{U}\to\mathbb{C}$ is the Bogoliubov functional
in $\mathcal{U}$ if and only if
  \begin{equation*}
      \sum_{i,j,k=0}^{\infty}\frac{(i+j)!(i+k)!}{i!k!j!}B^{(i+j+k)}(0)f_{i+j}\bar{f}_{j+k}\geq
      0,\quad f=(f_n)_{n=0}^{\infty}\in l_{\rm fin}.
   \end{equation*}
\end{cor}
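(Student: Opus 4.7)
The plan is to derive the corollary as a direct consequence of Theorem~\ref{t.bogolub.one.dim}, combined with the characterization of analytic moment functionals on $(\mathcal{A},\star)$ supplied by Theorem~\ref{t.exp.kriterij.Newton}.

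First I would invoke Theorem~\ref{t.bogolub.one.dim} to translate the condition ``$B$ is the Bogoliubov functional in $\mathcal{U}$'' into a statement about its Taylor coefficients: it is equivalent to $\tau=(\tau_n)_{n=0}^{\infty}$ with $\tau_n:=B^{(n)}(0)$ being an analytic moment functional on $(\mathcal{A},\star)$. By Theorem~\ref{t.exp.kriterij.Newton}, this in turn is equivalent to the conjunction of two conditions: $\star$-positivity of $\tau$ on $\mathcal{A}$ and membership $\tau\in l_{-}^2$. Substituting $\tau_{i+j+k}=B^{(i+j+k)}(0)$ into the explicit form (\ref{eq.202-1-newton}) of $\star$-positivity produces precisely the inequality displayed in the corollary.

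The remaining step is to observe that the auxiliary condition $\tau\in l_{-}^2$ is automatic in the present setting and therefore need not be stated. Since $B\in\mathrm{Hol}_0(\mathbb{C})$, the Cauchy estimates for its Taylor coefficients give a constant $C>0$ with $|B^{(n)}(0)|\leq n!\,C^{n+1}$ for every $n\in\mathbb{N}_0$, and by Corollary~\ref{cor.hol} this is equivalent to $\tau\in l_{-}^2$. Both implications of the corollary then follow at once from the preceding chain of equivalences.

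There is no genuine obstacle here, as the corollary is essentially a transcription of Theorem~\ref{t.bogolub.one.dim} and Theorem~\ref{t.exp.kriterij.Newton}. The only point worth emphasizing is that the growth hypothesis $\tau\in l_{-}^2$ appearing in Theorem~\ref{t.exp.kriterij.Newton} is automatically encoded in the analyticity of $B$ at the origin, which is why the criterion collapses to the single positivity inequality stated above.
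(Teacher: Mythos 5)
Your derivation is correct and is precisely the argument the paper intends (the corollary is stated without proof as an immediate consequence of Theorems~\ref{t.bogolub.one.dim} and~\ref{t.exp.kriterij.Newton}). The key observation you make explicit --- that analyticity of $B$ at the origin yields $\tau\in l_{-}^2$ via the Cauchy estimates and Corollary~\ref{cor.hol}, so only the $\star$-positivity condition survives --- is exactly why the criterion reduces to the single displayed inequality.
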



\section{Infinite dimensional case}\label{s.infinite-dim-case}

The theory outlined in previous sections has an essential
development to the case of functions of infinite many variables, see
e.g. \cite{BK88, BKKL99, B02a, B03, BM07} for details. Without going
into details we present here
a few examples of the results and open problems.


\subsection{Infinite dimensional power moment problem}

Let ${\mathcal F}(H)$ be a {\it symmetric Fock space} over a real
separable Hilbert space $H$, that is
\begin{equation*}
   {\mathcal F}(H)
   :={\mathbb C}\oplus\bigoplus_{n=1}^{\infty}
   H_{\mathbb C}^{\odot n},
\end{equation*}
where ${\odot}$ stands for the symmetric tensor product ($\otimes$
is the ordinary tensor product), the subindex $\mathbb{C}$ denotes
the complexification of a real space. Thus, ${\mathcal F}(H)$ is a
complex Hilbert space of sequences $f=(f_n)_{n=0}^{\infty}$ such
that  $f_n\in H_{\mathbb C}^{{\odot}n}$ ($H_{\mathbb
C}^{{\odot}0}:=\mathbb{C}$) and
$$
   \|f\|_{{\mathcal F}(H)}^{2}=|f_0|^2+\sum_{n=1}^{\infty}
   \|f_n\|_{H_{\mathbb C}^{\odot
   n}}^{2}<\infty.
$$
For simplicity, in the sequel we will suppose that
$H=L^2(\mathbb{R}):=L^2(\mathbb{R},dt)$ and one will always
identify, in the usual way, the space $L_{\mathbb
C}^2(\mathbb{R})^{\odot n}$ with the space $L_{{\mathbb C},\,{\rm
sym}}^{2}(\mathbb{R}^n)$ of all symmetric functions from $L_{\mathbb
C}^{2}(\mathbb{R}^n)$.

Let us construct a convenient for us rigging of the Fock space
$\mathcal{F}(L^2(\mathbb{R}))$. To this end, we start with the
classical rigging
\begin{equation}\label{eq.rigging.classic}
\mathcal{D}'\supset  L^2(\mathbb{R}) \supset \mathcal{D},
\end{equation}
where ${\mathcal D}={\mathcal D}(\mathbb{R})$ is the Schwartz space
of  infinite differentiable functions on $\mathbb{R}$ with compact
supports, ${\mathcal D}'={\mathcal D}'(\mathbb{R})$ is the Schwartz
space of distributions dual of ${\mathcal D}$ with respect to the
zero space $L^2({\mathbb R})$. We denote by
$\langle\cdot\,,\cdot\rangle$ the dual pairing between elements of
${\mathcal D}'$ and ${\mathcal D}$. We preserve the notation
$\langle\cdot\,,\cdot\rangle$ for the dual pairings in tensor powers
and complexifications of chain \eqref{eq.rigging.classic}.

Using (\ref{eq.rigging.classic}) we construct the rigging
\begin{equation}\label{eq.riggin.fock}
   \mathcal{F}_{\rm{fin}}'(\mathcal{D}) \supset  \mathcal{F}(L^2(\mathbb{R}))\supset
   \mathcal{F}_{\rm{fin}}(\mathcal{D}),
\end{equation}
where $\mathcal{F}_{\rm{fin}}(\mathcal{D})$ is a space of all finite
sequences $f=(f_n)_{n=0}^{\infty}$, $f_n\in {\mathcal D}_{\mathbb
C}^{\odot n}$ (i.e., $f_n=0$ for all $n\geq$ some
$N\in\mathbb{N}_0$), $\mathcal{F}_{\rm{fin}}'(\mathcal{D})
=\times_{n=0}^{\infty}({\mathcal D}_{\mathbb C}')^{\odot n}$ is the
dual of $\mathcal{F}_{\rm{fin}}(\mathcal{D})$ with respect to
$\mathcal{F}(L^2(\mathbb{R}))$ (it consists of all sequences of the
form $(\xi_n)_{n=0}^{\infty}$, $\xi_n\in ({\mathcal D}_{\mathbb
C}')^{\odot n}$). Note that in our case  the role of the spaces
$\mathcal{F}_{\rm{fin}}'(\mathcal{D}),\,
\mathcal{F}(L^2(\mathbb{R}))$ and
$\mathcal{F}_{\rm{fin}}(\mathcal{D})$ are the same as the role of
the spaces $\mathbb{C}^{\infty},\,  l^2$ and $l_{\rm{fin}}$ in the
one-dimensional case.

Denote by $\mathcal{P}(\mathcal{D}')$ the space of all continuous
polynomials on $\mathcal{D}'$,
$$
   \mathcal{P}(\mathcal{D}'):=
   \Big\{F:\mathcal{D}'\to\mathbb{C}\,\Big|\,\exists (f_n)_{n=0}^{\infty}\in
   \mathcal{F}_{\rm{fin}}(\mathcal{D})\,:\, F(x)=\sum_{n=0}^{\infty}\langle x^{\otimes n},f_n\rangle,\,
   x\in\mathcal{D}'\Big\}.
$$
By analogy with the one-dimensional case (see
(\ref{eq.convolution.g})), using the bijection
\begin{equation*}
     I:\mathcal{F}_{\rm{fin}}(\mathcal{D})\to \mathcal{P}(\mathcal{D}'),\quad
    f=(f_n)_{n=0}^{\infty}\mapsto
    (If)(x):=\sum_{n=0}^{\infty}\langle x^{\otimes n},f_n\rangle,
\end{equation*}
we introduce a product $\ast$ on
$\mathcal{F}_{\rm{fin}}(\mathcal{D})$ by setting
\begin{equation}\label{eq.conv-fock}
    f\ast g:=I^{-1}(If\cdot Ig),\quad f,g\in\mathcal{F}_{\rm{fin}}(\mathcal{D}).
\end{equation}
It is easy to check that (cf. (\ref{eq.convolution.classic}))
\begin{equation*}
    (f\ast g)_n=\sum_{i+j=n}f_{i}\odot g_{j}=\sum_{k=0}^{\infty}f_{k}\odot g_{n-k},
    \quad f,g\in \mathcal{F}_{\rm{fin}}(\mathcal{D}).
\end{equation*}
So, $\mathcal{F}_{\rm{fin}}(\mathcal{D})$ becomes a commutative
algebra $\mathcal{A}(\mathcal{D})$ with the product $\ast$, unity
$\delta_0=\{1,0,0,\ldots\}$ and the natural involution
$f\mapsto\bar{f}$ inducted by usual complex conjugation.

Let us pass to the infinite dimensional power moment problem.

\begin{defn}
We say that $\tau=(\tau_n)_{n=0}^{\infty}\in
\mathcal{F}_{\rm{fin}}'(\mathcal{D})=\times_{n=0}^{\infty}({\mathcal
D}_{\mathbb C}')^{\odot n}$ is a {\it moment functional  on}
($\mathcal{A}(\mathcal{D}), \ast$) if there exists a finite Borel
measure $\mu$ on $\mathcal{D}'$ such that
\begin{equation}\label{eq.moment-reprez-infinite}
   \tau_n=\int_{\mathcal{D}'}x^{\otimes n}d\mu(x),\quad\text{ i.e.,}\quad
   \langle\tau_n,\cdot\rangle=\int_{\mathcal{D}'}\langle x^{\otimes n},\cdot\rangle \,d\mu(x),\quad n\in \mathbb{N}_0.
\end{equation}
\end{defn}

Before stating the result note that the Schwartz space $\mathcal{D}$
can be interpreted as a projective limit of some Sobolev spaces
$D_\sigma,\,\sigma\in \Sigma$, i.e.,
$
   \mathcal{D}={\mathop{\rm {pr\ lim}}}_{\sigma\in \Sigma}D_\sigma,
$ where $\Sigma$ denotes some set of indexes, see e.g. \cite{BK88,
BUSH} for more details.

The following statement follows from \cite{BK88} (see also
\cite{B02a}).

\begin{thm}\label{t.inf-dim-power}
 Let $\tau=(\tau_n)_{n=0}^{\infty}\in\mathcal{F}_{\rm{fin}}'(\mathcal{D})$ and
 the following two conditions are fulfilled:
 \begin{enumerate}
   \item $\tau$ is $\ast$-positive (more exactly, non-negative) on
   $\mathcal{A}(\mathcal{D})=\mathcal{F}_{\rm{fin}}(\mathcal{D})$, that is
   \begin{equation}\label{eq.positivity-c-fock}
      \tau(f\ast\bar{f})=\sum_{j,k=0}^{\infty}\langle\tau_{j+k},f_j\odot\bar{f}_k\rangle\geq
      0,\quad f\in\mathcal{A}(\mathcal{D}).
   \end{equation}
   \item there exists an index $\sigma=\sigma(\tau)\in \Sigma$ such that
   $\tau_n\in D_{-\sigma,\mathbb{C}}^{\odot n}=(D_{\sigma,\mathbb{C}}^{\odot n})'$
   for all $n\in\mathbb{N}$ and the class
   \begin{equation}\label{eq.add-cond}
      C\{s_n\},\quad s_n=\sqrt{\|\tau_{2n}\|_{D_{-\sigma,\mathbb{C}}^{\odot
      2n}}},
   \end{equation}
   is quasianalytic (for example, $s_n=n!$).
 \end{enumerate}
 Then $\tau$ is a moment functional on ($\mathcal{A}(\mathcal{D}),
 \ast$) and the measure $\mu$ in representation (\ref{eq.moment-reprez-infinite})
 is uniquely defined.

 Conversely, for every moment functional
 $\tau$ ($\mathcal{A}(\mathcal{D}),
 \ast$) conditions
 (\ref{eq.positivity-c-fock}) is fulfilled.
\end{thm}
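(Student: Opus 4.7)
The plan is to lift the proof of Theorem~\ref{t.moment-problem} to the Fock setting by replacing the single operator $J_P$ with the commuting family $\{A(\phi)\}_{\phi\in\mathcal{D}}$ of Hermitian operators indexed by real test functions, and to use condition~(2) to obtain joint essential self-adjointness via the quasianalytic criterion (Theorem~\ref{thm.quasianalytic}). The converse (necessity of (\ref{eq.positivity-c-fock})) is immediate from
$$
\tau(f\ast\bar f)=\int_{\mathcal{D}'}|(If)(x)|^{2}\,d\mu(x)\geq 0.
$$

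For the sufficiency I would first build $H_\tau$ from the quasiscalar product $(f,g)_{H_\tau}:=\tau(f\ast\bar g)$, passing as before to a quotient by its kernel and completing. For $\phi\in\mathcal{D}$ set
$$
A(\phi)f:=(0,\phi,0,0,\ldots)\ast f,\qquad f\in\mathcal{F}_{\rm fin}(\mathcal{D}),
$$
so that under the isomorphism $I$ of (\ref{eq.conv-fock}) the operator $A(\phi)$ corresponds to multiplication of $(If)(x)$ by $\langle x,\phi\rangle$. Commutativity of the product $\ast$ and reality of $\phi$ show that each $A(\phi)$ is Hermitian on $H_\tau$, real with respect to the involution, and that the family is commutative. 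I would then construct a rigging
$$
\mathcal{K}_-\supset H_\tau\supset\mathcal{K}_+\supset\mathcal{F}_{\rm fin}(\mathcal{D})
$$
with $\mathcal{K}_+\hookrightarrow H_\tau$ quasinuclear, built from a Fock-type weight adapted to the nonnegative definite kernel $\langle\tau_{j+k},f_{j}\odot\bar f_{k}\rangle$ exactly as the weight $p$ was chosen in the one-dimensional proof. The vector $\delta_{0}=(1,0,0,\ldots)$ is a strong cyclic vector because $\{A(\phi_{1})\cdots A(\phi_{n})\delta_{0}\mid n\in\mathbb{N}_{0},\,\phi_{i}\in\mathcal{D}\}$ corresponds under $I$ to a total subset of $\mathcal{P}(\mathcal{D}')\subset\mathcal{K}_+$.

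The decisive step is to turn condition~(2) into essential self-adjointness. A direct computation gives $A(\phi)^{n}\delta_{0}=(0,\ldots,0,\phi^{\otimes n},0,\ldots)$ with $\phi^{\otimes n}$ in position $n$, whence
$$
\|A(\phi)^{n}\delta_{0}\|_{H_\tau}^{2}=\langle\tau_{2n},\phi^{\otimes 2n}\rangle\leq\|\tau_{2n}\|_{D_{-\sigma,\mathbb{C}}^{\odot 2n}}\|\phi\|_{D_{\sigma,\mathbb{C}}}^{2n}=s_{n}^{2}\|\phi\|_{D_{\sigma,\mathbb{C}}}^{2n},
$$
so $\|A(\phi)^{n}\delta_{0}\|_{H_\tau}\leq s_{n}\|\phi\|_{D_{\sigma,\mathbb{C}}}^{n}$. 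Quasianalyticity of the class $C\{s_n\}$ then forces $\sum_{n\geq 1}\|A(\phi)^{n}\delta_{0}\|_{H_\tau}^{-1/n}=\infty$, so $\delta_{0}$ is quasianalytic for every $A(\phi)$; by Theorem~\ref{thm.quasianalytic} and a standard argument for commuting families (see \cite{BK88, BUSH}) the family $\{A(\phi)\}_{\phi\in\mathcal{D}}$ admits a joint self-adjoint extension on a common core. Applying the Fock-space analogue of Theorem~\ref{thm.spectral} (see \cite{BK88, B02a}) yields a finite Borel measure $\mu$ on $\mathcal{D}'$ and joint generalized eigenvectors $\xi(x)$ satisfying
$$
\langle\xi(x),A(\phi)f\rangle_{H_\tau}=\langle x,\phi\rangle\langle\xi(x),f\rangle_{H_\tau}.
$$
Exactly as in the derivation of (\ref{eq.Ip-Ia}), the spectral Fourier transform must coincide with $f\mapsto(If)$, so the Parseval identity together with $\langle\tau_n,F\rangle=(\delta_{n},\delta_{0})_{H_\tau}$ gives
$$
\langle\tau_{n},F\rangle=\int_{\mathcal{D}'}\langle x^{\otimes n},F\rangle\,d\mu(x),\qquad F\in\mathcal{D}_{\mathbb{C}}^{\odot n},
$$
which is (\ref{eq.moment-reprez-infinite}). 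Uniqueness of $\mu$ is a consequence of the essential self-adjointness of each $A(\phi)$ on the common core $\mathcal{F}_{\rm fin}(\mathcal{D})$.

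The principal obstacle is the passage from single-operator quasianalyticity of $\delta_{0}$ to joint essential self-adjointness of the uncountable commuting family $\{A(\phi)\}_{\phi\in\mathcal{D}}$, together with the careful coordination of the rigging so that condition~(2), with its particular index $\sigma(\tau)$, is simultaneously compatible with quasinuclearity of $\mathcal{K}_+\hookrightarrow H_\tau$ and with standard-connectedness of every $A(\phi)$ to the chain.
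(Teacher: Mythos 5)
Your proposal follows essentially the same route as the paper, which itself only sketches the argument: it constructs $H_{\tau}$ from $(f,g)_{H_{\tau}}=\tau(f\ast\bar{g})$, applies the projection spectral theorem to the commuting family of creation operators $J(\varphi)f=(0,\varphi,0,0,\ldots)\ast f$, and uses condition (2) to secure essential self-adjointness and strong commutativity, deferring the technical core to \cite{BK88}. The one point to watch is that Theorem~\ref{thm.quasianalytic} requires a \emph{total} set of quasianalytic vectors, so your estimate for $\|A(\phi)^{n}\delta_{0}\|_{H_{\tau}}$ must be extended to all of $\mathcal{F}_{\rm fin}(\mathcal{D})$ (as the paper does for every $\delta_{k}$ in the proof of Theorem~\ref{t.exp.unique}), but this is the same standard step the paper delegates to \cite{BK88}.
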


The proof of this result is analogous to that of
Theorem~\ref{t.moment-problem}. Namely, just as in the case of the
one-dimensional moment problem, Theorem~\ref{t.inf-dim-power} is a
result of the application of the projection spectral theorem to the
family $(J(\varphi))_{\varphi\in\mathcal{D}}$ of ``creation''
operators
\begin{equation}\label{eq.field-power}
   J(\varphi):\mathcal{A}(\mathcal{D})=\mathcal{F}_{\rm{fin}}({\mathcal D})\to
   \mathcal{A}(\mathcal{D}),\quad
   J(\varphi)f:=(0,\varphi,0,0,\ldots)\ast f,
\end{equation}
acting in a Hilbert space $H_{\tau}$  associated with the
quasiscalar product
\begin{equation}\label{eq.inner-power}
(f,g)_{H_{\tau}}=\tau(f\ast \bar{g}),\quad
f,g\in\mathcal{F}_{\rm{fin}}(\mathcal{D}).
\end{equation}

Note also that, unlike the one-dimensional case, only conditions
(\ref{eq.positivity-c-fock}) it is not sufficient for existence of
representation (\ref{eq.moment-reprez-infinite}). This is connected
with impossibility, in general, to extend a family of commuting
Hermitian operators to some family of strongly commuting selfadjoint
operators. Condition (\ref{eq.add-cond}) implies that the
corresponding Hermitian operators are essential selfadjoint and
strongly commuting (therefore, the measure $\mu$ in
Theorem~\ref{t.inf-dim-power} is unique). It is possible to give a
condition weaker than (\ref{eq.add-cond}) which makes it possible to
extend these Hermitian operators to selfadjoint commuting operators,
in this case the measure $\mu$ is not unique. For the corresponding
result, see \cite{BK88}, Ch.~5,  \S~2.

\begin{rem}\label{r.analit.laplas.inf}
Using results from \cite{KSW95}, it can be shown that the infinite
dimensional analog of Theorem~\ref{t.exp.kriterij} holds. Namely,
{\it a functional $\tau\in \mathcal{F}_{\rm{fin}}'(\mathcal{D})$
admits representation (\ref{eq.moment-reprez-infinite}) with the
analytic measure $\mu\in\mathcal{M}_a(\mathcal{D}')$ (i.e.,
$\int_{\mathcal{D}'}\exp\langle x,\lambda\rangle \,d\mu(x)<\infty$
for all $\lambda$ from some neighborhood of $0\in
\mathcal{D}_{\mathbb{C}}'$) if and only if $\tau$ is $\ast$-positive
on $\mathcal{A}(\mathcal{D}')$ (i.e., (\ref{eq.positivity-c-fock})
holds)  and $\tau$ belongs to the space ${\mathcal F}_-$}. Here
${\mathcal F}_-$ is defined (similar to $l_-^2$) by the formula
$$
    {\mathcal F}_-:=\mathop{\rm ind\,lim}_{\sigma\in \Sigma, q\in{\mathbb N}}{\mathcal
    F}(-\sigma,-q)\subset\mathcal{F}_{\rm{fin}}'(\mathcal{D}),
 $$
where ${\mathcal F}(-\sigma,-q)$ is the so-called Kondratiev-type
Fock space,
\begin{equation*}
            {\mathcal F}(-\sigma,-q):
            =\Big\{f=(f_n)_{n=0}^{\infty}\in\mathcal{F}_{\rm{fin}}'(\mathcal{D})\,\Big|\,
            \|f\|_{{\mathcal F}(-\sigma,-q)}^2
            =\sum_{n=0}^{\infty}\|f_n\|_{D_{-\sigma,\mathbb{C}}^{\odot n}}^{2}
            (n!)^{-2}2^{-qn}<\infty\Big\}.
\end{equation*}
\end{rem}


\subsection{Moment problem associated with correlation
functions}\label{s-s.correlations-functions}

In this subsection it is convenient for us to interpret the Fock
space $\mathcal{F}(L^2(\mathbb{R}))$ as the space of functions on
the space of finite configurations on $\mathbb{R}$. Namely, denote
by $\Gamma^{(n)}$ the space of $n$-point configuration, i.e.,
$$
   \Gamma^{(n)}:=\{\eta\subset\mathbb{R}\,|\,|\eta|=n\},
$$
where $|\cdot|$ means cardinality of a set. As a set, $\Gamma^{(n)}$
coincides with the symmetrization of
$$
    \widehat{\mathbb{R}}^n:=\{(t_1,\ldots,t_n)\in\mathbb{R}^n\,|\,
    t_n\neq t_j\,\,\text{if}\,\, k\neq j\}.
$$
Hence, $\Gamma^{(n)}$ inherits the topology of $\mathbb{R}^n$.
Denote by $\mathcal{B}(\Gamma^{(n)})$ the corresponding Borel
$\sigma$-algebra on $\Gamma^{(n)}$ and introduce a measure $m^{(n)}$
on $\mathcal{B}(\Gamma^{(n)})$ as the image of product $m^{\otimes
n}$ of Lebesque measures $dm(t)=dt$ on $\mathcal{B}(\mathbb{R})$. It
is clear that
$$
    L^2(\Gamma^{(n)},m^{(n)})=L_{{\mathbb C},\,{\rm
sym}}^{2}(\mathbb{R}^n,m^{\otimes n}).
$$

The space $\Gamma_0$ of (all) finite configuration is defined as the
topological disjoint union
$$
   \Gamma_0=\bigsqcup_{n=0}^{\infty}\Gamma^{(n)}.
$$
Denote by $\nu$ the  {\it Lebesque-Poisson measure} on the Borel
$\sigma$-algebra $\mathcal{B}(\Gamma_0)$,
$$
    \nu:=\sum_{n=0}^{\infty}\frac{1}{n!}m^{(n)},\quad
    m^{(0)}(\varnothing):=1,
$$
and by $L^2(\Gamma_0,\nu)$ the corresponding $L^2$-space. Clearly,
the Fock space $\mathcal{F}(L^2(\mathbb{R}))$ can be identified with
the space $L^2(\Gamma_0,\nu)$ via
$$
   \mathcal{F}(L^2(\mathbb{R}))\ni (f_n)_{n=0}^{\infty}\sim
  \sum_{n=0}^{\infty}F_n(\cdot)\in L^2(\Gamma_0,\nu),
$$
where $F_0(\varnothing):=f_0$ and
$$
   F_n(\eta):=
\left\{
  \begin{array}{ll}
    f_n(t_1,\ldots,t_n),  & \text{if}\,\,\eta=(t_1,\ldots,t_n)\in \Gamma^{(n)}\\
    0, & \text{otherwise}
  \end{array},
\right.
$$
for all $n\in\mathbb{N}$. So,
$$
   \mathcal{F}(L^2(\mathbb{R}))\cong L^{2}(\Gamma_0,\nu)=\bigoplus_{n=0}^{\infty}
   L^2(\Gamma^{(n)},m^{(n)})\,.
$$
In what follows
we won't distinguish between a vector $f=(f_n)_{n=0}^{\infty}$ from
the Fock space $\mathcal{F}(L^2(\mathbb{R}))$ (and from
$\mathcal{F}_{\rm{fin}}(\mathcal{D})$) and the corresponding
function $f(\eta),\, \eta\in\Gamma_0$, i.e.,
$$
   \mathcal{F}(L^2(\mathbb{R}))\ni f=(f_n)_{n=0}^{\infty}=f(\eta),
   \quad \eta\in\Gamma_0,\quad f_n=f\upharpoonright \Gamma^{(n)}.
$$

We will need also the space $\Gamma$ of infinite configurations on
$\mathbb{R}$, i.e., the space  of all locally finite subsets  in
$\mathbb{R}$
$$
   \Gamma:=\{\gamma\subset\mathbb{R}\;|\;|\gamma\cap\Lambda|<\infty
   \;\text{for all compact}\; \Lambda\subset\mathbb{R}\}.
$$
We consider the $\sigma$-algebra $\mathcal{B}(\Gamma)$ as the
smallest $\sigma$-algebra for which all the mappings
$N_{\Lambda}:\Gamma\to\mathbb{N}_0$,
$N_{\Lambda}(\gamma):=|\gamma\cap\Lambda|$, are measurable for all
bounded Borel set $\Lambda\subset\mathbb{R}$. Note that each element
$\gamma\in\Gamma$ can be identified with a generalized function:
$$
   \Gamma\ni\gamma\mapsto \sum_{t\in\gamma}\delta_t\in\mathcal{D}',
$$
where $\delta_t$ denotes the delta function (Dirac measure) at $t$.
In this way, the space $\Gamma$ is embedded in the Schwartz space of
distributions $\mathcal{D}'$.

Let us pass to a definition of the so-called {\it Kondratiev--Kuna
convolution} $\star$. This convolution acts in
$\mathcal{F}_{\rm{fin}}(\mathcal{D})$ and we define it by analogy
with (\ref{eq.conv-fock}) but using instead of the monomials
$\langle x^{\otimes n},f_n\rangle$ an infinite dimensional analog of
the Newton polynomials.

Recall that infinite dimensional Newton polynomials
$\chi_n(x)\in({\mathcal D}')^{\odot n}$ are defined as coefficients
of the following  expansion (cf. (\ref{eq.newton-def-onedim}))
$$
  e^{\langle x, \log (1+\lambda)\rangle}
  =\sum_{n=0}^{\infty}\frac{1}{n!}\langle \chi_n(x),\lambda^{\otimes
  n}\rangle,\quad
  x\in\mathcal{D}',\quad\lambda\in\mathcal{D}_{\mathbb{C}}.
$$
It is well known that (see e.g. \cite{BT04})
$$
   \langle\chi_{n}(x),\lambda^{\otimes n}\rangle
   =\sum_{m=0}^{n-1}(-1)^{n-m-1}\frac{(n-1)!}{m!}
   \langle\lambda^{n-m},x\rangle
   \langle\chi_{m}(x),\lambda^{\otimes m}\rangle,\quad
   n\in\mathbb{N}_0,
$$
and the mapping
\begin{equation}\label{eq.bijection-kk}
     I_{\chi}:\mathcal{F}_{\rm{fin}}(\mathcal{D})\to \mathcal{P}(\mathcal{D}'),\quad
    f=(f_n)_{n=0}^{\infty}\mapsto
    (I_{\chi}f)(x):=\sum_{n=0}^{\infty}\langle \chi_n(x),f_n\rangle
\end{equation}
is bijection. Therefore, we can introduce the convolution $\star$ on
$\mathcal{F}_{\rm{fin}}(\mathcal{D})$ by setting
\begin{equation*}
    f\star g:=I_{\chi}^{-1}(I_{\chi}f\cdot I_{\chi}g),\quad f,g\in\mathcal{F}_{\rm{fin}}(\mathcal{D}).
\end{equation*}
It follows from e.g. \cite{KK02, BM07} that
\begin{equation*}
    (f\star g)(\eta)=\sum_{\eta_1\sqcup\eta_2\sqcup\eta_3=\eta}f(\eta_1\cup\eta_2)g(\eta_2\cup\eta_3),
    \quad \eta\in\Gamma_0,\quad f,g\in \mathcal{F}_{\rm{fin}}({\mathcal D}),
\end{equation*}
where the summation is taken over all partitions of $\eta$ in three
parts (parts may be empty). Note that (\ref{eq.convolution.k-k}) is
a one-dimensional analog of the latter formula.

So, the space $\mathcal{F}_{\rm{fin}}(\mathcal{D})$ endowed with the
product $\star$ becomes a commutative algebra
$\mathcal{A}(\mathcal{D})$ with unity $\delta_0=\{1,0,0,\ldots\}$
and the natural involution $f\mapsto\bar{f}$ inducted by usual
complex conjugation.

Let us pass to the corresponding infinite dimensional moment
problem.

\begin{defn}
We say that $\tau=(\tau_n)_{n=0}^{\infty}\in
\mathcal{F}_{\rm{fin}}'({\mathcal
D})=\times_{n=0}^{\infty}({\mathcal D}_{\mathbb C}')^{\odot n}$ is a
{\it moment functional  on} ($\mathcal{A}(\mathcal{D}), \star$) if
there exists a finite Borel measure $\mu$ on $\mathcal{D}'$ such
that
\begin{equation}\label{eq.moment-reprez-infinite-kk}
   \tau_n=\int_{\mathcal{D}'}\chi_n(x)\,d\mu(x),\quad\text{ i.e.,}\quad
   \langle\tau_n,\cdot\rangle=\int_{\mathcal{D}'}\langle \chi_n(x),\cdot\rangle \,d\mu(x),\quad n\in \mathbb{N}_0.
\end{equation}
\end{defn}


From \cite{BM07} (see also \cite{BKKL99, B03}) follows the following
result.

\begin{thm}\label{t.inf-dim-newton}
 Let $\tau=(\tau_n)_{n=0}^{\infty}\in\mathcal{F}_{\rm{fin}}'({\mathcal D})$ and
 the following three conditions are fulfilled:
 \begin{enumerate}
   \item there exists a $\sigma$-finite measure $\rho$ on $\mathcal{B}(\Gamma_0)$
   such that
   \begin{equation}\label{eq.func-generate-measure}
      \tau(f)=\int_{\Gamma_0}f(\eta)\,d\rho(\eta),\quad
      f\in\mathcal{A}(\mathcal{D})=\mathcal{F}_{\rm{fin}}({\mathcal D}).
   \end{equation}
   \item $\tau$ is $\star$-positive on $\mathcal{A}(\mathcal{D})$, that is
   \begin{equation}\label{eq.positivity-fock-kk}
      \tau(f\star\bar{f})=\int_{\Gamma_0}(f\star\bar{f})(\eta)\,d\rho(\eta)\geq
      0,\quad f\in\mathcal{A}(\mathcal{D}).
   \end{equation}
   \item for every compact $\Lambda\subset\mathbb{R}$ there exists a
   constant $C_{\Lambda}>0$ such that
   \begin{equation}\label{eq.positivity-fock-add}
      \rho(\Gamma_\Lambda^{(n)})\leq C_{\Lambda}^n,\quad
      n\in\mathbb{N},
   \end{equation}
   where
   $\Gamma_\Lambda^{(n)}:=\{\eta\subset\Lambda\,|\,|\eta|=n\}$.
 \end{enumerate}
 Then $\tau$ is a moment functional on ($\mathcal{A}(\mathcal{D}),
 \star$) and the measure $\mu$ in representation (\ref{eq.moment-reprez-infinite-kk})
 is uniquely defined.

 Conversely, every moment functional $\tau\in\mathcal{F}_{\rm{fin}}'({\mathcal D})$ is
 $\star$-positive on $\mathcal{A}(\mathcal{D})$.
\end{thm}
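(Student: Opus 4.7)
The plan is to follow the same scheme used in Theorem~\ref{t.moment-problem} and Theorem~\ref{t.inf-dim-power}, replacing the single operator $J_P$ by a commuting family of ``creation''-type operators indexed by $\varphi\in\mathcal{D}$ and using the projection spectral theorem for commuting families of self-adjoint operators. First I would construct the Hilbert space $H_\tau$ as the completion of $\mathcal{F}_{\rm{fin}}(\mathcal{D})$ (modulo the kernel) with respect to the quasiscalar product $(f,g)_{H_\tau}:=\tau(f\star\bar g)$; by condition (1) this is simply the $L^2$-norm on $\Gamma_0$ with weight measure $\rho$ applied to $f\star\bar g$, and condition (2) ensures it is non-negative. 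Then, by analogy with \eqref{eq.field-power}, I introduce, for each test function $\varphi\in\mathcal{D}$, the Hermitian operator
\[
J(\varphi)f:=(0,\varphi,0,0,\ldots)\star f,\qquad f\in\mathcal{F}_{\rm{fin}}(\mathcal{D}),
\]
defined on the dense domain $\mathcal{F}_{\rm{fin}}(\mathcal{D})\subset H_\tau$. The Hermiticity $(J(\varphi)f,g)_{H_\tau}=(f,J(\varphi)g)_{H_\tau}$ is immediate from commutativity and associativity of $\star$ and from the fact that $(0,\varphi,0,0,\ldots)$ is real; pairwise commutativity of the $J(\varphi)$ also follows at once from commutativity of $\star$.

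The second (and main) step is to establish essential self-adjointness and strong commutativity of the family $(J(\varphi))_{\varphi\in\mathcal{D}}$, and here condition (3) enters decisively. I would show, using the explicit formula $(f\star g)(\eta)=\sum_{\eta_1\sqcup\eta_2\sqcup\eta_3=\eta}f(\eta_1\cup\eta_2)g(\eta_2\cup\eta_3)$ together with the bound $\rho(\Gamma_\Lambda^{(n)})\le C_\Lambda^n$, that for every $\varphi\in\mathcal{D}$ (supported in a compact $\Lambda$) the vector $\Omega=\delta_0$ is quasianalytic for $J(\varphi)$, i.e.\ $\|J(\varphi)^n\Omega\|_{H_\tau}^2$ grows no faster than $(n!)^2 D_\varphi^{\,n}$ for some constant $D_\varphi>0$. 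Via Theorem~\ref{thm.quasianalytic} each $J(\varphi)$ is essentially self-adjoint, and an analogous quasianalyticity estimate applied to linear combinations $J(\varphi_1)+\cdots+J(\varphi_k)$ (together with Nussbaum--Nelson-type arguments as in \cite{BK88, BKKL99}) gives strong commutativity of their closures. This is the step I expect to carry all the technical weight, because condition (3) must be exploited sharply to control the mixed moments of $\rho$ coming from the convolution formula.

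With a commuting family of self-adjoint extensions in hand I would appeal to the projection spectral theorem for commuting families, cast in a rigging of $\mathcal{F}(L^2(\mathbb{R}))$ analogous to~\eqref{eq.riggin.fock}, to produce a joint spectral measure $\mu$ supported on the joint spectrum, which (as in \cite{BM07, BK88}) can be realized on $\mathcal{D}'$. For $\mu$-almost every $x\in\mathcal{D}'$ there is a generalized joint eigenvector $\xi(x)\in\mathcal{F}_{\rm{fin}}'(\mathcal{D})$ satisfying $\langle\xi(x),J(\varphi)f\rangle_{H_\tau}=\langle x,\varphi\rangle\langle\xi(x),f\rangle_{H_\tau}$ for all $\varphi\in\mathcal{D}$ and $f\in\mathcal{F}_{\rm{fin}}(\mathcal{D})$. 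To identify $\xi(x)$ with the infinite-dimensional Newton polynomial $\chi(x)=(\chi_n(x))_{n=0}^{\infty}$, I would verify that the sequence $(\chi_n(x))_{n=0}^{\infty}$ satisfies exactly the same eigenvalue recursion (this is the analogue of the computation \eqref{eq.multiplication-on-x} and the check carried out after \eqref{eq.self.unitary.moment} in the proof of Theorem~\ref{t.moment-problem}, using the bijection $I_\chi$ from \eqref{eq.bijection-kk} and the fact that $I_\chi J(\varphi)I_\chi^{-1}$ is multiplication by $\langle x,\varphi\rangle$ on $\mathcal{P}(\mathcal{D}')$).

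Finally, the Parseval equality gives
\[
\tau_n(\varphi^{\otimes n})=(\delta_n,\delta_0)_{H_\tau}\big|_{\varphi^{\otimes n}}
=\int_{\mathcal{D}'}\langle\chi_n(x),\varphi^{\otimes n}\rangle\,d\mu(x),
\]
which by polarization and density of tensor products yields representation~\eqref{eq.moment-reprez-infinite-kk}. Uniqueness of $\mu$ follows from strong commutativity (hence uniqueness of the joint spectral measure) together with the density of $\mathcal{P}(\mathcal{D}')$ in $L^2(\mathcal{D}',\mu)$, which is inherited from the totality of $\{J(\varphi_1)^{n_1}\cdots J(\varphi_k)^{n_k}\Omega\}$ in $H_\tau$. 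The converse direction is immediate: if $\tau$ satisfies \eqref{eq.moment-reprez-infinite-kk}, then $\tau(f\star\bar f)=\int_{\mathcal{D}'}|(I_\chi f)(x)|^2\,d\mu(x)\ge 0$, exactly as in the one-dimensional necessity argument of Theorem~\ref{t.moment-problem}.
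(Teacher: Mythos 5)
Your proposal follows exactly the route the paper itself indicates: the paper does not prove Theorem~\ref{t.inf-dim-newton} in detail but refers to \cite{BM07} and states that the proof is obtained as for Theorem~\ref{t.inf-dim-power}, replacing the Cauchy product by $\star$, i.e.\ by applying the projection spectral theorem to the commuting Hermitian family $J(\varphi)f=(0,\varphi,0,\ldots)\star f$ in the Hilbert space generated by $(f,g)_{H_\tau}=\tau(f\star\bar g)$, with condition (\ref{eq.positivity-fock-add}) supplying the quasianalyticity needed for essential self-adjointness, strong commutativity and uniqueness of $\mu$. Your sketch fills in this scheme correctly (including the identification of the generalized eigenvectors with $(\chi_n(x))_{n=0}^{\infty}$ via $I_\chi J(\varphi)I_\chi^{-1}$ being multiplication by $\langle x,\varphi\rangle$), so it is essentially the same approach as the paper's.
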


A way of proving this result is similar to that of
Theorem~\ref{t.inf-dim-power} but in this case instead of
(\ref{eq.field-power}) and (\ref{eq.inner-power}) it is necessary to
use the operators
\begin{equation*}
   J(\varphi):\mathcal{A}(\mathcal{D})=\mathcal{F}_{\rm{fin}}({\mathcal D})\to
   \mathcal{A}(\mathcal{D}),\quad
   J(\varphi)f:=(0,\varphi,0,0,\ldots)\star f,
\end{equation*}
and the quasiscalar product
\begin{equation*}
(f,g)_{H_{\tau}}=\tau(f\star \bar{g}),\quad
f,g\in\mathcal{F}_{\rm{fin}}({\mathcal D}).
\end{equation*}

\begin{rem}
It is possible to give a condition on $\rho$ weaker than
(\ref{eq.positivity-fock-add}) which guarantee the existence of the
measure $\mu$ on $\mathcal{D}'$ such that
(\ref{eq.moment-reprez-infinite-kk}) holds, see \cite{BM07} for
details.
\end{rem}

\begin{rem}
Let us explain the connection of the moment problem on
$(\mathcal{A}(\mathcal{D}),\star)$ with some essential objects of
statistical mechanics.

At first we recall the definitions of these objects, see e.g.
\cite{KK02, KKO06} for a detailed explanation. The so-called
$K$-transform maps the functions defined on $\Gamma_0$ into
functions defined on $\Gamma$ by the formula
$$
    K:\mathcal{F}_{\rm{fin}}({\mathcal D})\to \mathcal{P}(\Gamma),\quad
    f\mapsto (Kf)(\gamma):=\sum_{\eta\Subset\gamma}f(\eta),
$$
where the summation is taken over all finite subconfigurations of
$\gamma$ (for short $\eta\Subset\gamma$) and $\mathcal{P}(\Gamma)$
denotes the space of all continuous polynomials on
$\Gamma\subset\mathcal{D}'$. Note here that the $K$-transform
coincides with mapping $I_{\chi}$ from (\ref{eq.bijection-kk}). More
exactly,
\begin{equation}\label{eq.k-trasform-is-fourier}
    (I_{\chi}f)(\gamma)=(Kf)(\gamma),\quad
      f\in\mathcal{F}_{\rm{fin}}({\mathcal D}),\quad \gamma\in\Gamma.
\end{equation}

For a probability measure $\mu$ on $\Gamma$, the so-called
correlation measure $\rho_\mu$ corresponding to $\mu$ is a
$\sigma$-finite measure on $\Gamma_0$ defined by
$$
   \int_{\Gamma_0}f(\eta)\,d\rho_\mu(\eta)
   =\int_{\Gamma}(Kf)(\gamma)\,d\mu(\gamma),\quad
      f\in\mathcal{F}_{\rm{fin}}({\mathcal D}).
$$
If the measure $\rho_\mu$ is absolutely continuous with respect to
the Lebesgue--Poisson measure $\nu$ then the corresponding
Radon--Nikodym derivative
$k_\mu(\eta)=\frac{d\rho_\mu}{d\nu}(\eta),\,\eta\in\Gamma_0$, is
called a correlation functional of a measure $\mu$. Note that in
this case the functions
$$
   k_{\mu,n}(t_1,\ldots,t_n):=
\left\{
  \begin{array}{ll}
    k_{\mu}(\{t_1,\ldots,t_n\}),  & \text{if}\,\, (t_1,\ldots,t_n)\in \widehat{\mathbb{R}}^n\\
    0, & \text{otherwise}
  \end{array},
\right.
$$
are well-known correlation functions of statistical physics, see
e.g. \cite{Ruelle1, Ruelle2}.


In several applications, a $\sigma$-finite measure $\rho$ on
$\Gamma_0$ appears as a given object and the problem is to show that
this $\rho$ can be seen as a correlation measure for a probability
measure on $\Gamma$. Due to (\ref{eq.k-trasform-is-fourier}) it is
easy to see that this problem is a particular case of the moment
problem on $(\mathcal{A}(\mathcal{D}),\star)$. Namely, a given
measure $\rho$ on $\Gamma_0$ is a correlation measure for a
probability measure $\mu$ on $\Gamma$ if and only if the
corresponding functional $\tau$ defined by
(\ref{eq.func-generate-measure}) admits representation
(\ref{eq.moment-reprez-infinite-kk}) with this $\mu$ (i.e., $\tau$
is a moment functional on the algebra ($\mathcal{A},\star$)).

It should be noticed that Theorem~\ref{t.inf-dim-newton} gives the
sufficient conditions that $\rho$ is a correlation measure.  More
exactly, let $\rho$ be a given measure on $\Gamma_0$. Suppose that
the corresponding functional $\tau$ (defined by
(\ref{eq.func-generate-measure})) satisfies all conditions of
Theorem~\ref{t.inf-dim-newton} and, moreover,
$$
    \sum_{n=0}^{\infty}2^n\rho(\Gamma_{\Lambda}^{(n)})<\infty
$$
for every compact $|\Lambda|\subset \mathbb{R}$. Then $\Gamma$ is
the set of full measure $\mu$ and due to equality
(\ref{eq.k-trasform-is-fourier}) and representation
(\ref{eq.moment-reprez-infinite-kk}) we have
$$
   \tau(f)=\int_{\Gamma_0}f(\eta)\,d\rho(\eta)
   =\int_{\Gamma}(Kf)(\gamma)\,d\mu(\gamma),\quad
      f\in\mathcal{F}_{\rm{fin}}({\mathcal D}).
$$
So, $\rho$ is the correlation measure of $\mu$. If, moreover, $\rho$
is absolutely continuous with respect to the Lebesgue--Poisson
measure $\nu$ then the corresponding correlation functional $k_\mu$
of $\mu$ coincides with $\tau$, i.e.,
$\tau=k_\mu=\frac{d\rho}{d\nu}$.
\end{rem}

\begin{rem}
We formulate some  problems, the investigation of which are
essential for the above described theory:


\begin{enumerate}
  \item To give 
        sufficient conditions on a functional
        $\tau=(\tau_n)_{n=0}^{\infty}\in\mathcal{F}_{\rm{fin}}'({\mathcal
        D})$ which would guarantee the existence of representation
        (\ref{eq.moment-reprez-infinite-kk}), i.e., to give
        conditions for validity of Theorem~\ref{t.inf-dim-newton}
        different from (\ref{eq.func-generate-measure}) or (\ref{eq.positivity-fock-add}).
  \item To prove an infinite dimensional analog of
        Theorem~\ref{t.exp.kriterij.Newton} and as a consequence to get an
        analog of Theorem~\ref{t.bogolub.one.dim} for the classical Bogoliubov
        functional. 
  \item To investigate the situation when a measure $\rho$ on
        $\Gamma_0$ is a correlation measure for a probability
        measure  on $\Gamma$. More exactly, to give  sufficient
        conditions on $\rho$ which would guarantee
        the existence of representation (\ref{eq.moment-reprez-infinite-kk}) for the functional
        $\tau\in\mathcal{F}_{\rm{fin}}'({\mathcal D})$ determined by $\rho$
        and, moreover, these conditions should assure that the measure $\mu$
        from (\ref{eq.moment-reprez-infinite-kk}) is concentrated
         on $\Gamma$.
\end{enumerate}
\end{rem}

{\it{Acknowledgments.}}
I would like to thank Prof. Yu.~M.~Berezansky and Dr.~K.~Yusenko
for valuable comments.

\end{document}